\theoremstyle{plain}
\newtheorem{theorem}{Theorem}[section]
\newtheorem{lemma}[theorem]{Lemma}
\newtheorem{proposition}[theorem]{Proposition}
\newtheorem{conjecture}[theorem]{Conjecture}
\newtheorem{Bounded Diameter Lemma}[theorem]{Bounded Diameter Lemma}
\theoremstyle{definition}
\newtheorem{definition}[theorem]{Definition}
\newtheorem{remark}[theorem]{Remark}
\newcommand{\Hmm}[1]{\leavevmode{\marginpar{\tiny%
$\hbox to 0mm{\hspace*{-0.5mm}$\leftarrow$\hss}%
\vcenter{\vrule depth 0.1mm height 0.1mm width \the\marginparwidth}%
\hbox to
0mm{\hss$\rightarrow$\hspace*{-0.5mm}}$\\\relax\raggedright #1}}}
\DeclareFixedFont{\Acknowledgment}{OT1}{cmr}{bx}{n}{14pt}
\begin{document}
\numberwithin{equation}{section}

\title{Generalizing Andreev's Theorem via circle patterns}

\author{
\textsc{Ze Zhou} \\
\normalsize School of Mathematical Sciences, Shenzhen University,  Shenzhen 518060, China \\
\normalsize{zhouze@szu.edu.cn}
}

\date{}

\maketitle

\begin{abstract}
In this paper we derive an extended Circle Pattern Theorem that allows obtuse overlap angles. As a consequence, we characterize a subclass of compact convex hyperbolic polyhedra with possibly obtuse dihedral angles and thus generalize Andreev's Theorem. For proofs of these results, we establish a discrete analog of the "weak solution/regularity theory" scheme.

\medskip
\noindent{\bf Mathematics Subject Classifications (2020):} 52C26, 57M15, 52B10, 51M15.

\end{abstract}



\section{Introduction}
A circle pattern $\mathcal P$ on the Riemann sphere $\hat{\mathbb C}$ is a collection of closed disks. The contact graph $G(\mathcal P)$ of $\mathcal P$ is the graph having a vertex for each disk, and having an edge between the vertices $u$ and $w$ if the corresponding disks $D_u, D_w$ intersect each other. For each edge $e=[u,w]$ of $G(\mathcal P)$, we have the overlap angle $\Theta(e)\in[0,\pi)$.  We refer the readers to Stephenson's monograph~\cite{Stephenson} for basic background on circle patterns.

Let us consider the following questions: Given a graph $G$ and a function $\Theta:E\to[0,\pi)$ defined on the set of edges, is there a circle pattern with contact graph isomorphic to $G$ and overlap angles given by $\Theta$? If so, to what extent is the pattern unique? This problem is well posed under the condition that $0\leq\Theta\leq \pi/2$. In this case, by Thurston's interpretation~\cite{Thurston} of Andreev's Theorem~\cite{Andreev} (see also~\cite{Roeder-Hubbard-Dunbar}), the problem has a complete answer when $G$ is the $1$-skeleton of a triangulation of the sphere. Mention that the special case of vanishing angles was due to Koebe~\cite{Koebe}. The result is now known as the Koebe-Andreev-Thurston Theorem (or KAT Theorem for short), which plays a fundamental role in approximating conformal maps~\cite{Rodin-Sullivan} and hyperbolization of Haken 3-manifolds~\cite{Otal}. For an account of history of the KAT Theorem together with an extensive list of references, see the recent article of Philip Bowers~\cite{PBowers}.

During the past decades, there was fruitful research relating to KAT Theorem. These include the works of Marden-Rodin~\cite{Marden-Rodin}, Beardon-Stephenson~\cite{Beardon-Stephenson},  Colin de Verdi$\mathrm{\grave{e}}$re~\cite{Colin}, Bowers-Stephenson~\cite{Bowers-Stephenson}, Chow-Luo~\cite{Chow-Luo}, Bobenko-Springborn~\cite{Bobenko-Springborn}, Connelly-Gortler~\cite{Connelly-Gortler}, Bowers~\cite{Bowers} and many others.

For circle patterns with possibly obtuse overlap angles, whether similar results hold is a longstanding problem (see, e.g.,~\cite{Bowers-Stephenson,He}). Rivin-Hodgson~\cite{Rivin-Hodgson} described all compact convex hyperbolic polyhedra in terms of a generalized Gauss map, which was applied to deduction of Andreev's Theorem by Hodgson~\cite{Hodgson}. Yet this characterization fails to determine combinatorics in more general situations. Rivin~\cite{Rivin1, Rivin2, Rivin3}, Bao-Bonahon~\cite{Bao-Bonahon}, Bobenko-Springborn~\cite{Bobenko-Springborn} and Schlenker~\cite{Schlenker}  later made significant breakthroughs on ideal and hyperideal settings. Recently, Ge-Hua-Zhou~\cite{Ge-Hua-Zhou} and Jiang-Luo-Zhou~\cite{Jiang-Luo-Zhou} derived several generalizations of the Marden-Rodin Theorem~\cite{Marden-Rodin}, while their results gave only "weak solutions" that did not rule out extraneous overlaps. To the best of our knowledge, so far a fully direct generalization of the KAT Theorem that allows obtuse overlap angles remains open.

In this paper we shall attack the problem. We say a circle pattern $\mathcal P=\{D(v)\}_{v\in V}$ on the Riemann sphere  $\hat{\mathbb C}$ is \textbf{irreducible} if $\cup_{v\in A}D(v)\subsetneq \hat{\mathbb C}$ for every strict subset $A\subsetneq V$. Our main result is as follows.

\begin{theorem}\label{T-1-1}
Let $T$ be a triangulation of the sphere with more than four vertices. Suppose that $\Theta:E\to (0,\pi)$ is a function (where $E$ denotes the set of edges of $T$) satisfying the conditions below:
\begin{itemize}
\item[$\mathrm{\mathbf{(c1)}}$] If $e_1,e_2$ form a simple arc with non-adjacent endpoints, then $\Theta(e_1)+\Theta(e_2)\leq\pi$, and one of the inequalities is strict when $T$ is the boundary of a double tetrahedron.
\item[$\mathrm{\mathbf{(c2)}}$] If $e_1,e_2,e_3$ form the boundary of a triangle of $T$, then $\sum_{\mu=1}^3\Theta(e_\mu)>\pi$, and $\Theta(e_1)+\Theta(e_2)<\Theta(e_3)+\pi$, $\Theta(e_2)+\Theta(e_3)<\Theta(e_1)+\pi$, $\Theta(e_3)+\Theta(e_1)<\Theta(e_2)+\pi$.
\item[$\mathrm{\mathbf{(c3)}}$] If $e_1,e_2,e_3$ form a simple closed curve separating the vertices of $T$, then $\sum_{\mu=1}^3\Theta(e_\mu)<\pi$.
\item[$\mathrm{\mathbf{(c4)}}$] If $e_1,e_2,e_3,e_4$ form a simple closed curve separating the vertices of $T$, then $\sum_{\mu=1}^4\Theta(e_\mu)<2\pi$.
\end{itemize}
Then there exists an \textbf{irreducible} circle pattern $\mathcal P$ on the Riemann sphere $\hat{\mathbb C}$ with contact graph isomorphic to the $1$-skeleton of $T$ and overlap angles given by $\Theta$. Furthermore, $\mathcal P$ is unique up to linear and anti-linear fractional maps.
\end{theorem}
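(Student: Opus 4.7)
The plan is to follow the discrete ``weak solution / regularity'' scheme announced in the abstract. First, under the face condition (c2), produce a weak solution: a family $\{D(v)\}_{v\in V}$ of closed disks on $\hat{\mathbb C}$ whose pairs indexed by edges of $T$ meet at the prescribed angles $\Theta(e)$. Existence of such a weak solution can be extracted from the generalized Marden--Rodin theorems of Ge--Hua--Zhou \cite{Ge-Hua-Zhou} and Jiang--Luo--Zhou \cite{Jiang-Luo-Zhou}, or obtained directly by minimizing a Koebe-type convex functional on the space of radius vectors; this is the analog in our setting of a Sobolev weak solution to a boundary value problem.

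The regularity step is to exclude the three kinds of defect a weak solution may a priori exhibit: (a) an extraneous overlap between a pair of disks whose vertices span no edge of $T$, (b) the nesting of one disk strictly inside another, and (c) reducibility $\cup_{v\in A}D(v)=\hat{\mathbb C}$ for some proper subset $A\subsetneq V$. By an argument in the spirit of Andreev, each such defect forces the existence of a short simple closed curve in the $1$-skeleton of $T$ separating the vertex set, along which an angle-sum identity obtained from spherical or hyperbolic trigonometry in the associated interstices must be violated. Conditions (c1), (c3), (c4) are calibrated to the three combinatorial types of short separating curve of interest---a two-edge arc between non-adjacent endpoints, a separating triangle, and a separating quadrilateral, respectively---while the triangle condition (c2) ensures that each face of $T$ actually carries a valid triangular interstice. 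The strict inequalities in these conditions supply the margin needed to contradict the assumed defect and, simultaneously, to force irreducibility.

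Uniqueness up to linear and anti-linear fractional transformations follows by rigidity. Normalizing two given solutions $\mathcal P_1,\mathcal P_2$ so that three selected disks coincide, one sets up a logarithmic ratio of hyperbolic radii as a real-valued function on $V$; a discrete maximum principle, using (c2) at an extremal vertex, forces this function to vanish identically. Equivalently, the map from configurations modulo the M\"obius action to admissible angle assignments is shown to be a proper local diffeomorphism, whence a connectivity and degree argument over the convex polytope cut out by (c1)--(c4) yields both global uniqueness and, together with the weak existence, completes the existence statement over the entire admissible region.

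I anticipate the regularity step to be the main obstacle. The weak existence and rigidity pieces can be assembled from established variational and discrete maximum-principle technology; by contrast, verifying that (c1)--(c4) collectively exhaust the obstructions to regularity requires a case analysis that is genuinely more delicate than in the acute-angle regime, because the sign of the relevant trigonometric invariants in each interstice changes as $\Theta$ crosses $\pi/2$. In particular, the classical Andreev reductions, which treat disks as boundaries of half-spaces in $\mathbb H^3$ meeting at acute dihedral angles, do not transfer verbatim, and new monotonicity arguments valid throughout the full range $(0,\pi)$ must be identified in order to run the separating-cycle argument sketched above.
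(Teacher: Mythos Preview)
Your plan diverges from the paper's in a way that matters. The paper does \emph{not} first produce a weak solution from the variational results of \cite{Ge-Hua-Zhou,Jiang-Luo-Zhou} and then regularize it. Instead it runs a continuity/degree argument: it defines a configuration space $M_{IG}^\star$ of \emph{already} irreducible, extraneous-overlap-free, normalized patterns, considers the angle map $f:M_{IG}^\star\to W_0$, proves $f$ is proper (this is Theorem~\ref{T-1-5}, the Normal Family Theorem), and computes $\deg(f)=\pm1$ using the classical Andreev theorem in the acute-angle region as the seed regular value. Surjectivity follows, and the boundary case $\Theta\in W\setminus W_0$ is obtained by approximating from inside $W_0$ and applying Theorem~\ref{T-1-5} once more.

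The reason this distinction is not cosmetic is the regularity step you flag as the main obstacle. The paper's exclusion of extraneous overlaps (Lemmas~\ref{L-4-4} and \ref{L-4-7} in the proof of Theorem~\ref{T-1-5}) works only because one is looking at a \emph{limit of patterns with correct contact graph}: in such a limit, any new intersection $D_\infty(v_\alpha)\cap D_\infty(v_\beta)$ between non-adjacent disks must be a single tangency point, and Lemma~\ref{L-4-4} then gives $\Theta([v_\alpha,v_\eta])+\Theta([v_\eta,v_\beta])>\pi$ for some common neighbor $v_\eta$, contradicting (c1). A weak solution coming directly from a variational principle could exhibit \emph{transversal} extraneous overlaps of positive area, and for those the tangency argument does not apply; there is no evident mechanism by which (c1)--(c4) alone rule out such deep overlaps. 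Your sketch (``each defect forces a short separating cycle with a violated angle-sum identity'') is correct for tangential degenerations but is not justified for transversal ones.

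Your uniqueness plan also differs. The paper does not use a discrete maximum principle on log-radii; indeed the standard monotonicity lemmas underlying that argument are known to be delicate once $\Theta$ exceeds $\pi/2$. Instead the paper passes to the associated hyperbolic polyhedron via Theorem~\ref{T-3-1} and invokes the Rivin--Hodgson rigidity theorem (Theorem~\ref{T-3-4}). Your ``equivalently, a proper local diffeomorphism plus degree'' remark is closer to the paper's existence argument, but note that the paper never establishes local injectivity of $f$; it only needs $\#f^{-1}(\widetilde\Theta)=1$ at a single acute-angle regular value, which comes for free from Andreev.
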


\begin{remark}
Condition $\mathrm{\mathbf{(c1)}}$ is motivated by an angle relation (see Lemma~\ref{L-4-4}) and is mainly applied to avoiding extraneous overlaps. Condition $\mathrm{\mathbf{(c2)}}$ is related to inner angles of spherical triangles. Remind that $\mathrm{\mathbf{(c1)}}, \mathrm{\mathbf{(c2)}}$ exclude some extreme cases noticed by
Bowers-Stephenson~\cite[p.\,214]{Bowers-Stephenson}. What is more, by an induction argument,  $\mathrm{\mathbf{(c1)}}-\mathrm{\mathbf{(c4)}}$ imply $\sum_{\mu=1}^k \Theta(e_\mu)<(k-2)\pi$  for $k\geq 5$ if $e_1,e_2,\cdots,e_k$ form a simple closed curve.
\end{remark}

In light of Thurston's interpretation~\cite[Chap. 13]{Thurston}, Theorem~\ref{T-1-1} leads to the following generalization of Andreev's Theorem~\cite{Andreev}. Mention that the rigidity part has already been proved by Rivin-Hodgson~\cite[Corollary 4.6]{Rivin-Hodgson}. For an abstract polyhedron $P$, recall that a prismatic $k$-circuit $\Gamma$ is a simple closed curve formed of $k$ edges of the dual complex $P^\ast$ such that all of the endpoints of the edges of $P$ intersected by $\Gamma$ are distinct.

\begin{theorem}\label{T-1-3}
Let $P$ be an abstract trivalent polyhedron with more than four faces. Suppose that $\Theta:\mathcal E\to (0,\pi)$ is a function (where $\mathcal E$ denotes the set of edges of $P$) such that the following conditions hold:
\begin{itemize}
\item[$\mathrm{\mathbf{(a1)}}$]Whenever $\Gamma$ is a simple arc formed by two edges of $P^\ast$ such that $\Gamma$ has non-adjacent endpoints in $P^\ast$ and intersects distinct edges $e_1,e_2$ of $P$, then $\Theta(e_1)+\Theta(e_2)\leq \pi$, and one of the inequalities is strict if $P$ is the triangular prism.
\item[$\mathrm{\mathbf{(a2)}}$] Whenever three distinct edges $e_1,e_2,e_3$ of $P$ meet at a vertex, then $\sum_{\mu=1}^3\Theta(e_\mu)>\pi$, and
    $\Theta(e_1)+\Theta(e_2)<\Theta(e_3)+\pi$, $\Theta(e_2)+\Theta(e_3)<\Theta(e_1)+\pi$, $\Theta(e_3)+\Theta(e_1)<\Theta(e_2)+\pi$.
\item[$\mathrm{\mathbf{(a3)}}$] Whenever $\Gamma$ is a prismatic 3-circuit intersecting edges $e_1,e_2,e_3$, then $\sum_{\mu=1}^3\Theta(e_\mu)<\pi$.
\item[$\mathrm{\mathbf{(a4)}}$] Whenever $\Gamma$ is a prismatic 4-circuit intersecting edges $e_1,e_2,e_3,e_4$, then $\sum_{\mu=1}^4\Theta(e_\mu)<2\pi$.
\end{itemize}
Then there exists a compact convex hyperbolic polyhedron $Q$ combinatorially equivalent to $P$ with dihedral angles given by $\Theta$. Furthermore, $Q$ is unique up to isometries of $\mathbb H^3$.
\end{theorem}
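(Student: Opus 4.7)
The plan is to reduce Theorem~\ref{T-1-3} to the circle-pattern Theorem~\ref{T-1-1} via Thurston's correspondence between circle patterns on $\partial\mathbb{H}^3\cong\hat{\mathbb{C}}$ and compact convex polyhedra in $\mathbb{H}^3$. Since $P$ is trivalent with more than four faces, its dual $T:=P^{\ast}$ is a triangulation of $S^2$ with more than four vertices; vertices of $T$ correspond to faces of $P$, edges of $T$ to edges of $P$ (in bijection), and triangles of $T$ to vertices of $P$. I transport the function $\Theta$ to the edges of $T$ under this identification, verify that $\mathrm{\mathbf{(a1)}}$--$\mathrm{\mathbf{(a4)}}$ translate precisely into $\mathrm{\mathbf{(c1)}}$--$\mathrm{\mathbf{(c4)}}$, apply Theorem~\ref{T-1-1} to obtain an irreducible circle pattern $\mathcal{P}=\{D_v\}_{v\in V(T)}$ on $\hat{\mathbb{C}}$ with contact graph $T$ and overlap angles $\Theta$, and finally convert $\mathcal{P}$ into the desired compact hyperbolic polyhedron $Q$.

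For the translation step, a two-edge arc in $P^{\ast}$ with non-adjacent endpoints is the same object as a two-edge arc in $T$ with non-adjacent endpoints, and the triangular prism dualizes to the double tetrahedron, so $\mathrm{\mathbf{(a1)}}$ reproduces $\mathrm{\mathbf{(c1)}}$ verbatim. Three edges meeting at a vertex of $P$ dualize to three edges bounding a triangle face of $T$, turning $\mathrm{\mathbf{(a2)}}$ into $\mathrm{\mathbf{(c2)}}$. For $3$-circuits: a prismatic $3$-circuit is a $3$-cycle in $T$ whose six adjacent triangle faces are all distinct, and a short combinatorial check shows that coinciding adjacent faces force the cycle to bound a triangle of $T$, so prismatic $3$-circuits in $P$ are exactly the separating $3$-cycles in $T$, which yields $\mathrm{\mathbf{(c3)}}$. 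For any separating $4$-cycle in $T$ that fails to be prismatic, the cycle must contain a diagonal edge of $T$ joining two non-adjacent cycle vertices; a chord-splitting argument then bounds $\sum_{\mu=1}^{4}\Theta(e_\mu)$ by combining the inequality supplied by $\mathrm{\mathbf{(a2)}}$ on the triangle face adjacent to the chord with the one supplied by $\mathrm{\mathbf{(a3)}}$ on the resulting smaller separating cycle, promoting $\mathrm{\mathbf{(a4)}}$ to $\mathrm{\mathbf{(c4)}}$.

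Once $\mathcal{P}$ is in hand, I build $Q$ via the classical construction. Each disk $D_v$ bounds a unique totally geodesic plane $\Pi_v\subset\mathbb{H}^3$, and I set $Q:=\bigcap_v H_v$, where $H_v$ is the closed half-space bounded by $\Pi_v$ whose trace at infinity is $D_v$. Because the angle of intersection of two hyperbolic planes equals the angle of intersection of their boundary circles, the edge of $Q$ along $\Pi_u\cap\Pi_w$ has dihedral angle equal to the overlap angle of $D_u,D_w$, so $Q$ carries exactly the prescribed $\Theta$. The strict inequality $\sum_{\mu=1}^{3}\Theta(e_\mu)>\pi$ from $\mathrm{\mathbf{(a2)}}$ forces the three planes attached to any triangle of $T$ to share a point in the interior of $\mathbb{H}^3$, which makes every vertex of $Q$ finite and therefore $Q$ compact. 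Irreducibility of $\mathcal{P}$, together with $\mathrm{\mathbf{(c1)}}$ and the angle relation of Lemma~\ref{L-4-4}, rules out extraneous overlaps, so the face lattice of $Q$ matches that of $P$. Rigidity then follows either directly from Rivin-Hodgson~\cite[Corollary~4.6]{Rivin-Hodgson} as the statement notes, or by combining the uniqueness clause of Theorem~\ref{T-1-1} with the identification of M\"obius and anti-M\"obius transformations of $\hat{\mathbb{C}}$ with isometries of $\mathbb{H}^3$.

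The main obstacle I anticipate is combinatorial bookkeeping rather than new geometry: verifying the prismatic-versus-separating equivalence for $4$-circuits, and confirming that irreducibility and $\mathrm{\mathbf{(c1)}}$ together exclude every degeneration---hidden face coincidences, unwanted adjacencies, or vertices escaping to $\partial\mathbb{H}^3$---that could make $Q$ fail to be combinatorially equivalent to $P$ with exactly the prescribed dihedral angles. A secondary but essential check is to pin down the overlap/dihedral normalization so that an overlap angle in $(0,\pi)$ yields a dihedral angle in $(0,\pi)$ in the direction compatible with $\mathrm{\mathbf{(a2)}}$--$\mathrm{\mathbf{(a4)}}$, as in the construction of Thurston's Chapter~13.
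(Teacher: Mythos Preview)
Your overall strategy matches the paper's exactly: dualize to $T=P^{\ast}$, transport $\Theta$, verify $\mathrm{\mathbf{(c1)}}$--$\mathrm{\mathbf{(c4)}}$, apply Theorem~\ref{T-1-1}, and then use Thurston's half-space construction. Your translation of $\mathrm{\mathbf{(a1)}}$--$\mathrm{\mathbf{(a4)}}$ into $\mathrm{\mathbf{(c1)}}$--$\mathrm{\mathbf{(c4)}}$ is more explicit than the paper's (which just says ``it is easy to see''), and your chord-splitting argument for non-prismatic separating $4$-cycles is correct.

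The gap is in the step that converts the circle pattern $\mathcal{P}$ into a polyhedron with the correct face lattice. You write that ``irreducibility of $\mathcal{P}$, together with $\mathrm{\mathbf{(c1)}}$ and the angle relation of Lemma~\ref{L-4-4}, rules out extraneous overlaps, so the face lattice of $Q$ matches that of $P$.'' But Theorem~\ref{T-1-1} already delivers a pattern whose contact graph is exactly the $1$-skeleton of $T$, so there are no extraneous disk overlaps left to rule out; $\mathrm{\mathbf{(c1)}}$ and Lemma~\ref{L-4-4} did their work earlier, inside the proof of Theorem~\ref{T-1-5}. What remains---and what you have not addressed---is that even with the correct contact graph, the intersection $Q=\bigcap_v H_v$ could in principle have a degenerate boundary structure. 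The paper handles this via Theorem~\ref{T-3-1}, whose key hypothesis is condition $\mathrm{\mathbf{(cr)}}$: $D(v_\alpha)\cap D(v_\beta)\cap D(v_\gamma)=\emptyset$ whenever $[v_\alpha,v_\beta],[v_\beta,v_\gamma],[v_\gamma,v_\alpha]$ form a separating $3$-cycle in $T$. This is verified using irreducibility (so the three open disks do not cover $\hat{\mathbb{C}}$) together with $\mathrm{\mathbf{(c3)}}$ and Lemma~\ref{L-3-3}; the face-by-face argument of Theorem~\ref{T-3-1} (via Lemma~\ref{L-3-2}) then pins down the combinatorics of $\partial_{\mathbb H^3}Q$. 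So the relevant ingredients at this stage are $\mathrm{\mathbf{(c3)}}$ and Lemma~\ref{L-3-3}, not $\mathrm{\mathbf{(c1)}}$ and Lemma~\ref{L-4-4}. One minor additional point: finiteness of each vertex of $Q$ requires all of $\mathrm{\mathbf{(c2)}}$, not only $\sum_\mu\Theta(e_\mu)>\pi$, since the determinant of the Gram matrix of the three planes factors through all four inequalities.
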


\begin{remark}
It is somewhat unexpected that there exists a convex subset broader than Andreev's characterization~\cite{Andreev} included in the moduli space of compact convex hyperbolic polyhedra of a given combinatorial type, in contrast with the non-convexity of the whole space indicated by D\'{\i}az~\cite{Diaz1,Diaz2} and Roeder~\cite{Roeder}.
\end{remark}

To acquire Theorem~\ref{T-1-1}, we will introduce a series of configuration spaces which  consist of circle patterns satisfying finer and finer properties. In this way we reduce the proof to showing the associated test map is surjective, which can be solved via topological degree theory. For this purpose, one needs to check the test map is proper and thus the topological degree is well-defined. Specifically, it suffices to deduce that under suitable conditions a sequence of circle patterns can produce a limit pattern with required properties.
Let $W$ denote the set of functions $\Theta:E\to (0,\pi)$ satisfying conditions $\mathrm{\mathbf{(c1)}}-\mathrm{\mathbf{(c4)}}$. Precisely, we will show the following Normal Family Theorem for circle patterns.

\begin{theorem}\label{T-1-5}
Let $\{\mathcal P_n\}$ be a sequence of \textbf{irreducible} circle patterns on $\hat{\mathbb C}$, each of which  has contact graph isomorphic to the $1$-skeleton of a triangulation $T$ of the sphere with more than four vertices.  Then $\{\mathcal P_n\}$ (modulo linear and anti-linear fractional maps) contains a subsequence convergent to an \textbf{irreducible} circle pattern with the same contact graph if the corresponding overlap angle function sequence $\{\Theta_n\}$ stays in a compact subset of $W$.
\end{theorem}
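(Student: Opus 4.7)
The plan is a normal-family argument in the spirit of Thurston's and Rodin--Sullivan's classical compactness results for circle packings, adapted to obtuse overlaps through the inequalities (c1)--(c4) and the irreducibility hypothesis. I would begin by fixing three vertices $v_1,v_2,v_3\in V$ and applying a (possibly anti-)Möbius transformation $\phi_n$ to each $\mathcal P_n$ so that $\phi_n(D_n(v_j))$ equals a prescribed reference disk $D^\ast_j$ for $j=1,2,3$; this exhausts the six real parameters of the Möbius group. Passing to a subsequence, every $\phi_n(D_n(v))$ converges in the Hausdorff topology on closed subsets of $\hat{\mathbb C}$ to a limit $D_\infty(v)$, and $\Theta_n\to\Theta_\infty\in W$ by the compactness hypothesis. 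A priori each $D_\infty(v)$ may be a single point, a genuine disk, or all of $\hat{\mathbb C}$.

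The crux is to exclude degeneration. Let $V_0=\{v: D_\infty(v)\text{ is a point}\}$ and $V_\infty=\{v: D_\infty(v)=\hat{\mathbb C}\}$; the normalization forces $v_1,v_2,v_3\notin V_0\cup V_\infty$, so these sets are proper. Suppose for contradiction $V_0\neq\varnothing$ and pick a connected component $V_0'\subset V_0$. Since $\Theta_\infty(e)\in(0,\pi)$ on every interior edge of $V_0'$, two adjacent shrinking disks must collapse to the same point, so all disks of $V_0'$ concentrate at a single $p\in\hat{\mathbb C}$. The edges $e_1,\ldots,e_k$ joining $V_0'$ to $V\setminus V_0'$ then form a simple closed curve separating vertices of $T$, and a local analysis at $p$ of the cyclically arranged neighboring limit disks yields an angle estimate $\sum_{\mu=1}^k\Theta_\infty(e_\mu)\geq(k-2)\pi$. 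This violates the strict $k$-cycle inequality satisfied by every element of $W$, as noted in the Remark after Theorem~\ref{T-1-1}. The case $V_\infty\neq\varnothing$ reduces to this one through the involution $D\mapsto\hat{\mathbb C}\setminus\overline D$, which preserves irreducibility and replaces $\Theta$ by $\pi-\Theta$, keeping us inside $W$.

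Once every $D_\infty(v)$ is a bona fide disk, continuity of the spherical intersection angle yields that $\mathcal P_\infty:=\{D_\infty(v)\}_{v\in V}$ realizes every edge of $T$ with overlap angle $\Theta_\infty$. To exclude extraneous overlaps in the limit, observe that for non-adjacent $u,w$ the disks $D_n(u),D_n(w)$ are disjoint for all $n$, so the limit pair can at worst touch tangentially; Lemma~\ref{L-4-4}, combined with the strict form of (c1) furnished by compactness of $\Theta_n$ in $W$, excludes this tangency. For the irreducibility of $\mathcal P_\infty$: if some proper subfamily $\{D_\infty(v):v\in A\}$ covered $\hat{\mathbb C}$, a compactness argument applied to the irreducibility witnesses $p_n\in\hat{\mathbb C}\setminus\cup_{v\in A}D_n(v)$ (which exist for each $n$), together with the now-established combinatorial structure of $\mathcal P_\infty$ and the non-degeneracy of every limit disk, yields a contradiction.

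I expect the main obstacle to be the degeneration step. In the classical KAT regime $\Theta\leq\pi/2$, disks at non-adjacent vertices are automatically disjoint, and an elementary angle accounting around the collapse point $p$ closes the argument. For obtuse angles this safety net vanishes: neighboring boundary disks can overlap one another substantially before, during, and after the collapse, so the naive identity $\sum(\pi-\Theta_\infty(e_\mu))=2\pi$ must be upgraded to a subtler inequality whose sharpness requires the full refined $k$-cycle bound from the Remark, together with a careful case analysis of the relative geometry of the collapsing cluster and its boundary ring.
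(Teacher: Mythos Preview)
Your overall architecture---normalize, extract a subsequence, rule out degeneration, rule out extraneous tangencies, verify irreducibility---matches the paper's. But several of the load-bearing steps are either incorrect or do not go through as stated.

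\textbf{Degeneration.} Your plan is to collapse a component $V_0'$ to a point $p$ and obtain $\sum_{\mu}\Theta_\infty(e_\mu)\geq(k-2)\pi$ from the angular budget around $p$. You correctly flag this as the main obstacle, and indeed the paper does \emph{not} proceed this way. Instead it chooses a very specific normalization (three disks of spherical radius $\pi/2$), uses irreducibility and $\mathrm{(c3)}$ to force \emph{every} radius to be at most $\pi/2$ (Lemma~\ref{L-4-5}), and then proves a Ring Lemma (Lemma~\ref{L-4-6}) giving a uniform ratio bound $r(D(v))\leq C\,r(D(u))$ for adjacent $u,v$. The Ring Lemma itself is established by a contradiction argument that produces a single non-adjacent tangency and then invokes $\mathrm{(c1)}$ and $\mathrm{(c4)}$; it does not attempt the global angle-sum you propose. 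Your acknowledged difficulty is real: with obtuse overlaps the boundary disks of the collapsing cluster may themselves overlap one another, and the naive $2\pi$ accounting at $p$ breaks down. The paper sidesteps this entirely.

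\textbf{The involution.} The map $D\mapsto\hat{\mathbb C}\setminus\overline D$ does not do what you claim. Boundary circles are unchanged, so overlap angles are preserved, not sent to $\pi-\Theta$; more seriously, disjoint disks have complements whose union is all of $\hat{\mathbb C}$, so the contact graph is destroyed and irreducibility has no evident meaning for the image pattern. The paper never needs to treat $V_\infty$ at all, because the normalization plus Lemma~\ref{L-4-5} caps every radius at $\pi/2$ from the outset.

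\textbf{Extraneous tangencies.} Citing Lemma~\ref{L-4-4} alone is not enough: that lemma requires a third disk $D(v_\eta)$ with $D(v_\alpha)\cap D(v_\beta)\subset D(v_\eta)$, and you give no mechanism to produce $v_\eta$. The paper supplies this via Lemma~\ref{L-4-7}, which uses the $T$-type structure of the limit and condition $\mathrm{(c2)}$ to locate such a vertex. There is also a second, genuinely different case (the two limit disks share a common boundary circle) that needs separate treatment. Finally, note that elements of $W$ need only satisfy the non-strict inequality in $\mathrm{(c1)}$, so ``the strict form of $\mathrm{(c1)}$ furnished by compactness'' is not available; the paper's argument uses Lemma~\ref{L-4-4} to obtain a \emph{strict} inequality on the angle side, contradicting the non-strict $\mathrm{(c1)}$.
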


Let us give an outline of the proof of this theorem. We first endow $\hat{\mathbb C}$ with the spherical metric and assume each $\mathcal P_n$ satisfies some suitable normalization conditions. We easily see $\{\mathcal P_n\}$ contains a subsequence convergent to a pre-pattern $\mathcal P_\infty$ in certain compact extension space. It remains to verify that $\mathcal P_\infty$ satisfies the following  properties:
\begin{itemize}
\item[$\mathrm{\mathbf{\langle p1\rangle}}$] Each pre-disk in $\mathcal P_\infty$ is neither the entire sphere nor a point;
\item[$\mathrm{\mathbf{\langle p2\rangle}}$] $\mathcal P_\infty$ has contact graph isomorphic to the $1$-skeleton of $T$. Namely, no extraneous overlap is created in $\mathcal P_\infty$;
\item[$\mathrm{\mathbf{\langle p3\rangle}}$]$\mathcal P_\infty$ is  \textbf{irreducible}.
\end{itemize}

To assert $\mathrm{\mathbf{\langle p1\rangle}}$, we turn to an estimate based on the irreducibility assumption (see Lemma~\ref{L-4-5}) and an analog of the Ring Lemma of Rodin-Sullivan~\cite{Rodin-Sullivan} (see Lemma~\ref{L-4-6}). With respect to $\mathrm{\mathbf{\langle p2\rangle}}$,  as pointed out by
Bowers-Stephenson~\cite[p.\,214]{Bowers-Stephenson}, it is one of the main obstructions for extending the KAT Theorem. Fortunately, our approach reduces the problem to showing there is no "tangential type" of extraneous overlaps, which can be settled through some results  relating such extraneous overlaps to violations of conditions $\mathrm{\mathbf{(c1)}}-\mathrm{\mathbf{(c4)}}$ (see Lemma~\ref{L-4-4} and Lemma~\ref{L-4-7}). Finally, $\mathrm{\mathbf{\langle p3\rangle}}$ follows from several elementary facts (see Lemma~\ref{L-3-2} and Lemma~\ref{L-4-3}). To summarize, the entire proof can be seen in Section~\ref{S-5a}.

Notice that the above strategy is parallel to the "weak solution/regularity theory" scheme in PDE theory, where $\mathcal P_\infty$ plays a similar role to a weak solution, the lemmas mentioned above play similar roles to prior estimates, and configuration spaces play similar roles to Sobolev spaces.

The paper is organized as follows. In next section, assuming Theorem~\ref{T-1-5}, we give a quick proof of Theorem~\ref{T-1-1} by employing configuration spaces and topological degree theory. In Section~\ref{S-3a}, we derive Theorem~\ref{T-1-3} from Theorem~\ref{T-1-1}. To this end, we actually prove a theorem on which circle patterns induce compact convex hyperbolic polyhedra (see Theorem~\ref{T-3-1}). In Section~\ref{S-4a}, we establish several lemmas which serve as "prior estimates" for circle patterns. In Section~\ref{S-5a}, we prove Theorem~\ref{T-1-5}. In Section~\ref{S-6a}, we pose some questions for further developments. Particularly, we pose a conjecture which potentially provides a unified generalization of Theorem~\ref{T-1-3} and Bao-Bonahon's Theorem~\cite{Bao-Bonahon}. The last section is an appendix which complements the proofs of two lemmas in Section~\ref{S-4a}.

\bigskip
\noindent\textbf{Statement}.  We mention that Theorem~\ref{T-1-1} and Theorem~\ref{T-1-3}  grow out from an early version of the preprint~\cite{Zhou}, where the author investigated two classes of circle patterns through different geometry models. To improve readability, we  decide to divide the former manuscript into two articles. Specifically, in the latest version of the preprint~\cite{Zhou}, we restrict to circle patterns having at least one interstice. For patterns without any interstice, these are considered in the current paper.


\section{Circle patterns from the viewpoint of manifolds}\label{S-2a}
In this section we shall give a quick proof of Theorem~\ref{T-1-1} assuming Theorem~\ref{T-1-5}. For this purpose, we introduce several configuration spaces which parameterize circle patterns with finer and finer properties. It is worth pointing out that such a viewpoint has been rooted in the works of Bauer-Stephenson-Wegert~\cite{Bauer-Stephenson-Wegert}, Zhou~\cite{Zhou1}, Bowers-Bowers-Pratt~\cite{Bowers-Bowers-Pratt}, Connelly-Gortler-Theran~\cite{Connelly-Gortler-Theran} and Connelly-Gortler~\cite{Connelly-Gortler} in consideration of computational mechanisms (see~\cite{Bauer-Stephenson-Wegert,Connelly-Gortler}) or understanding of rigidity (see~\cite{Zhou1,Bowers-Bowers-Pratt,Connelly-Gortler-Theran}).

To begin with, we endow $\hat{\mathbb C}$ with the following Riemannian metric
\[
\mathrm{d s}=\frac{2|\mathrm{d}z|}{1+|z|^2}.
\]
Note that $(\hat{\mathbb C},\mathrm{d s})$ is isometric to the unit sphere $\mathbb S^2$ in $\mathbb R^3$. In what follows we shall not distinguish  $(\hat{\mathbb C},\mathrm{d s})$ and $\mathbb S^2$ for the sake of simplicity.

Recall that $T$ is a triangulation of the sphere with more than four vertices. Let $V,E,F$ denote the sets of vertices, edges and triangles of $T$, respectively.  Set $M=\hat{\mathbb C}^{|V|}\times (0,\pi)^{|V|}$. Then $M$ is a smooth manifold parameterizing the space of patterns of $|V|$ disks on $(\hat{\mathbb C},\mathrm{d s})$. Since $T$ is a triangulation, it is trivial to see
\[
2|E|=3|F|.
\]
Combining with Euler's formula, we have
\[
\begin{aligned}
\dim (M)&=3|V|\,\\
&=|E|+(2|E|-3|F|)+3\big(|V|-|E|+|F|\big)\\
&=|E|+6.
\end{aligned}
\]

A point $(\mathbf{z},\mathbf{r})=(z_1,\cdots,z_{|V|},r_1,\cdots,r_{|V|})\in M$ is called a configuration, since it assigns each vertex $v_i\in V$ a closed disk $D(v_i)$, where $D(v_i)$ is centered at $z_i$ and is of radius $r_i$. For an edge $e=[v_i,v_j]\in E$, the inversive distance $I(e,\mathbf{z},\mathbf{r})$ is 
\[
\frac{\cos r_i\cos r_j-\cos d(z_i,z_j)}{\sin r_i\sin r_j},
\]
where $d(z_i,z_j)$ denotes the spherical distance between $z_i$ and $z_j$.

Let us introduce several subspaces of $M$. The first subspace $M_{E}\subset M$ is the set of configurations subject to
\[
-1<I(e,\mathbf{z},\mathbf{r})<1
\]
for every $e\in E$. We can then compute the overlap angle  via the formula
\[
\Theta(e,\mathbf{z},\mathbf{r})=\arccos I(e,\mathbf{z},\mathbf{r}).
\]
Next we define $M_{\Theta}\subset M_{E}$ to be the subspace of configurations with overlap angle functions satisfying conditions $\mathrm{\mathbf{(c2)}}-\mathrm{\mathbf{(c4)}}$ and the condition below:
\begin{itemize}
\item[$\mathrm{\mathbf{(x1)}}$]If $e_1,e_2$ form a simple arc with non-adjacent endpoints, then $\Theta(e_1)+\Theta(e_2)<\pi$.
\end{itemize}
In addition, let $M_{G}\subset M_{\Theta}$ consist of configurations with the following property:
\begin{itemize}
\item[$\mathrm{\mathbf{(x2)}}$] Whenever $v_\alpha, v_\beta$ are distinct, non-adjacent vertices, then $D(v_\alpha)\cap D(v_\beta)=\emptyset$.
\end{itemize}
Apparently, each $(\mathbf{z},\mathbf{r})\in M_{G}$ gives a circle pattern with contact graph isomorphic to the $1$-skeleton of $T$. Meanwhile, let $M_{IG}\subset M_{G}$ consist of configurations for which the irreducibility property holds:
\begin{itemize}
\item[$\mathrm{\mathbf{(x3)}}$] For any strict subset $A\subsetneq V$,  then $\cup_{v_i\in A} D(v_i)\subsetneq \hat{\mathbb C}$.
\end{itemize}
Note that $M_E, M_{\Theta}, M_{G}, M_{IG}$ are open subsets of $M$ and thus are smooth manifolds.

Let $v_a,v_b,v_c$ be the vertices of a triangle of $T$ and let $e_a,e_b,e_c$ be the edges opposite to $v_a,v_b,v_c$, respectively. We use $M_{IG}^\star\subset M_{IG}$ to represent the subspace of configurations such that the following normalization conditions are satisfied:
\begin{itemize}
\item[$\mathrm{\mathbf{(x4)}}$]
    $z_a=0$,\;\, $z_b>0$,\;\, $\operatorname{Im}(z_c)>0$;
\item[$\mathrm{\mathbf{(x5)}}$]
$r_a=r_b=r_c=\pi/2$.
\end{itemize}

Let $W_0$ be the set of functions $\Theta:E\to (0,\pi)$ satisfying conditions $\mathrm{\mathbf{(x1)}}$ and $\mathrm{\mathbf{(c2)}}-\mathrm{\mathbf{(c4)}}$. We define a test map as follows:
\[
\begin{aligned}
f:\quad &M^\star_{IG} &\longrightarrow \quad &\quad\quad\quad\,\;  W_0\\
&(\mathbf{z},\mathbf{r})  &\longmapsto \quad  & \big(\Theta(e_1,\mathbf{z},\mathbf{r}),\Theta(e_2,\mathbf{z},\mathbf{r}),\cdots\big).
\end{aligned}
\]

To derive Theorem~\ref{T-1-1}, we need to demonstrate  $f$ is a surjective map. For this purpose, we will make the use of topological degree theory.  Note that one needs to prove nothing if $W_0$ is an empty set. So from now on we focus on the situation that $W_0\neq\emptyset$. We first observe the following fact.

\begin{lemma}\label{L-2-1}
If $W_0\neq \emptyset$, then $W_0\cap (0,\pi/2)^{|E|}\neq \emptyset$.  Furthermore, $W_0\cap (0,\pi/2)^{|E|}$ has positive measure in $(0,\pi)^{|E|}$.
\end{lemma}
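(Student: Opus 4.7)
The set $W_0$ is the intersection of $(0,\pi)^{|E|}$ with finitely many open affine half-spaces, so $W_0$ is open and convex in $(0,\pi)^{|E|}$. Since $(0,\pi/2)^{|E|}$ is also open, $W_0 \cap (0,\pi/2)^{|E|}$ is open in $(0,\pi)^{|E|}$, and the positive-measure assertion follows immediately once nonemptiness is established. Hence the content of the lemma is to exhibit one element of $W_0$ with every value less than $\pi/2$.

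A useful first observation is that inside the sub-cube $(0,\pi/2)^{|E|}$ both $\mathrm{\mathbf{(x1)}}$ and the triangle inequalities appearing in $\mathrm{\mathbf{(c2)}}$ are automatic: each has the form $\Theta(e_i)+\Theta(e_j)<(\cdot)$ with right-hand side at least $\pi$, while the left-hand side is strictly below $\pi$ when both angles lie in $(0,\pi/2)$. Thus within the cube one need only check the three sum conditions (the inequality $\sum_{\mu=1}^{3}\Theta(e_\mu)>\pi$ from $\mathrm{\mathbf{(c2)}}$ together with $\mathrm{\mathbf{(c3)}}$ and $\mathrm{\mathbf{(c4)}}$). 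My plan is therefore to take any $\Theta\in W_0$, construct a reference function $\Theta^{\star}\in(0,\pi/2)^{|E|}$ satisfying these three sum conditions, and then deform via the convex combination $\Theta_t=(1-t)\Theta+t\Theta^{\star}$ for $t\in[0,1]$. By convexity of $W_0$ the whole segment lies in $W_0$, and for $t$ sufficiently close to $1$ the values $\Theta_t(e)$ are all strictly less than $\pi/2$, placing $\Theta_t$ in the desired intersection.

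The construction of $\Theta^{\star}$ is the main obstacle. I plan to produce it from the given $\Theta$ by a truncation followed by a small correction. Set $\tilde\Theta(e)=\min\bigl(\Theta(e),\pi/2-\epsilon\bigr)$ for small $\epsilon>0$; this automatically lies inside the sub-cube and respects $\mathrm{\mathbf{(x1)}}$, $\mathrm{\mathbf{(c3)}}$, $\mathrm{\mathbf{(c4)}}$ as well as the triangle inequalities of $\mathrm{\mathbf{(c2)}}$, because truncation only lowers the sums in the upper-bound conditions and the triangle inequality of $\mathrm{\mathbf{(c2)}}$ becomes trivial on the cube. The only condition possibly violated is the triangle-sum inequality $\sum_\mu\tilde\Theta(e_\mu)>\pi$ on triangles whose original sum relied on an edge with $\Theta(e)>\pi/2-\epsilon$. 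To repair such triangles I would add a small positive corrective vector supported on the non-truncated edges, using the slack available in $\mathrm{\mathbf{(c3)}}$ and $\mathrm{\mathbf{(c4)}}$ at $\tilde\Theta$ (which is positive for $\epsilon$ small, by continuity from $\Theta$) to keep these upper-bound conditions intact.

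The delicate step is to show that the local corrections on individual triangles can be combined into a globally consistent perturbation of $\tilde\Theta$. I expect to handle this by a linear-programming feasibility argument exploiting the fact that each edge of $T$ belongs to exactly two triangles, which provides the flexibility to distribute the angle mass; the strict-inequality structure of $W_0$ at the original $\Theta$ ensures that the required slack exists after any sufficiently small truncation level $\epsilon$. This combinatorial-consistency step is where I foresee the most work.
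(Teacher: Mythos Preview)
Your reduction in the first paragraph is correct: $W_0$ is an open convex subset of $(0,\pi)^{|E|}$ (an intersection of finitely many open half-spaces), so once nonemptiness of $W_0\cap(0,\pi/2)^{|E|}$ is shown, positive measure is immediate.

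However, your construction of the reference point $\Theta^{\star}$ is far more elaborate than needed, and the acknowledged gap (the combinatorial LP feasibility argument) can be bypassed entirely. Note first a redundancy in your plan as written: if $\Theta^{\star}\in(0,\pi/2)^{|E|}$ already satisfies all three sum conditions, then by your own observation $\Theta^{\star}\in W_0$ and you are finished without any deformation. So the real work in your scheme is exactly the part you left open.

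The paper avoids all of this with a single explicit reference point: the constant function $\pi/3$. This lies in the \emph{closure} of $W_0$: conditions $\mathrm{\mathbf{(x1)}}$, the triangle inequalities in $\mathrm{\mathbf{(c2)}}$, $\mathrm{\mathbf{(c3)}}$, and $\mathrm{\mathbf{(c4)}}$ all hold strictly at $\pi/3$, while the triangle-sum condition in $\mathrm{\mathbf{(c2)}}$ holds with equality ($\pi/3+\pi/3+\pi/3=\pi$). Hence for any $\Theta_\ast\in W_0$ and any $s\in(0,1]$, the convex combination
\[
\Theta_s \;=\; s\,\Theta_\ast \;+\; (1-s)\,\tfrac{\pi}{3}
\]
satisfies every defining inequality of $W_0$ strictly (the only constraint that is tight at $\pi/3$ is the $\mathrm{\mathbf{(c2)}}$-sum, and $\Theta_\ast$ satisfies that one strictly, so positive weight on $\Theta_\ast$ keeps it strict). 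Thus $\Theta_s\in W_0$ for all $s\in(0,1]$, and taking $s$ close to $0$ forces every $\Theta_s(e)$ below $\pi/2$.

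The idea you missed is that the reference point need not lie in $W_0$ itself; it suffices that it lie on the boundary, touching only constraints for which the given $\Theta_\ast$ has strict slack. The constant $\pi/3$ achieves this with no truncation, no correction, and no combinatorics.
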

\begin{proof}
Choose $\Theta_\ast\in W_0$. For $s\in(0,1]$, set
\[
\Theta_s=s\Theta_\ast+\frac{(1-s)\pi}{3}.
\]
Taking $s_0$ sufficiently close to zero, we have $\Theta_{s_0}\in W_0\cap (0,\pi/2)^{|E|}$, which yields
\[
W_0\cap (0,\pi/2)^{|E|}\neq\emptyset.
\]
For $\delta>0$ , let $U_\delta$ consist of functions $\Theta:E\to(0,\pi)$ such that
\[
\Theta_{s_0}(e)<\Theta(e)<\Theta_{s_0}(e)+\delta,\;\;\; \forall e\in E.
\]
When $\delta$ is sufficiently small, it is easy to see $U_\delta\subset W_0\cap (0,\pi/2)^{|E|}$. Hence $W_0\cap(0,\pi/2)^{|E|}$ has positive measure in $(0,\pi)^{|E|}$.
\end{proof}

Recall that the test map $f$ is smooth. Applying Sard's Theorem~\cite{Guillemin-Pollack,Hirsch,Milnor,Lee}, one can find a regular value $\widetilde{\Theta}\in U_\delta\subset W_0\cap(0,\pi/2)^{|E|}$ of  $f$.
In view of Thurston's interpretation~\cite{Thurston}, the following lemma is a consequence of Andreev's Theorem~\cite{Andreev}. See also the work of Bowers-Stephenson~\cite{Bowers-Stephenson} for a more direct proof.

\begin{lemma}\label{L-2-2}
There exists an \textbf{irreducible} circle pattern $\widetilde{\mathcal P}$ on the Riemann sphere  $\hat{\mathbb C}$ with contact graph isomorphic to the $1$-skeleton of $T$ and overlap angles given by
$\widetilde{\Theta}$. Moreover, $\widetilde{\mathcal P}$ is unique up to linear and anti-linear fractional maps.
\end{lemma}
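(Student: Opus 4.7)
The plan is to deduce Lemma~\ref{L-2-2} directly from Thurston's interpretation of Andreev's Theorem, since the hypothesis $\widetilde{\Theta}\in U_\delta\subset W_0\cap(0,\pi/2)^{|E|}$ lands $\widetilde{\Theta}$ squarely in the non-obtuse regime for which the classical Koebe--Andreev--Thurston (KAT) theorem is already available. The task therefore reduces to checking that the separation hypotheses of that theorem hold for $\widetilde{\Theta}$ and that the resulting pattern is automatically irreducible.

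First I would verify Thurston's separation condition, namely that $\sum_{\mu=1}^{k}\widetilde{\Theta}(e_\mu)<(k-2)\pi$ whenever $e_1,\ldots,e_k$ forms a simple closed edge curve in the $1$-skeleton of $T$ separating its vertices. For $k=3$ this is precisely $\mathbf{(c3)}$ and for $k=4$ precisely $\mathbf{(c4)}$, and both hold because $\widetilde{\Theta}\in W_0$. For $k\geq 5$ the bound $\widetilde{\Theta}(e)<\pi/2$ gives $\sum_\mu\widetilde{\Theta}(e_\mu)<k\pi/2\leq(k-2)\pi$, since $k>4$. With every hypothesis in force, Thurston's interpretation of Andreev's Theorem~\cite{Thurston} (see also the direct argument of Bowers--Stephenson~\cite{Bowers-Stephenson}) produces a univalent circle pattern $\widetilde{\mathcal P}$ on $\hat{\mathbb C}$ whose contact graph is the $1$-skeleton of $T$ and whose overlap angles are exactly $\widetilde{\Theta}$, unique up to linear and anti-linear fractional maps.

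It remains to check irreducibility. Because every overlap angle is strictly less than $\pi/2$ and $\widetilde{\Theta}$ satisfies $\mathbf{(c2)}$, so that $\sum_{\mu}\widetilde{\Theta}(e_\mu)>\pi$ on each triangle of $T$, an elementary angle count at a putative triple point (three lines through a common point yield three pairwise angles whose sum is at most $\pi$) rules out any common point among the three disks around a triangle. The interstice bounded by the three corresponding arcs is therefore a non-empty open set, disjoint from every disk of the univalent pattern, so $\bigcup_{v\in V}D(v)\subsetneq\hat{\mathbb C}$, and a fortiori $\bigcup_{v\in A}D(v)\subsetneq\hat{\mathbb C}$ for every $A\subsetneq V$. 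Irreducibility is thus the only step that requires a moment of thought, and even it is extracted cheaply from $\mathbf{(c2)}$ together with acuteness; the remainder is a direct appeal to the classical KAT theorem with no serious obstacle.
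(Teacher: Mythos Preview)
Your appeal to the classical KAT theorem for existence and uniqueness is exactly what the paper does (it simply cites Andreev via Thurston, and Bowers--Stephenson). Your verification of Thurston's separating-cycle conditions is fine.

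The irreducibility argument, however, has a genuine gap. You claim that condition $\mathbf{(c2)}$, together with non-obtuse angles, forces $D(v_i)\cap D(v_j)\cap D(v_k)=\emptyset$ for each triangle $\{v_i,v_j,v_k\}$ of $T$, and hence that there is an interstice avoided by every disk, so that $\bigcup_{v\in V}D(v)\subsetneq\hat{\mathbb C}$. In fact the opposite holds. Under $\mathbf{(c2)}$ the Gram matrix of the three boundary circles is positive definite (this is precisely the computation in the proof of Theorem~\ref{T-3-1}), so the associated hyperplanes $\Pi(v_i),\Pi(v_j),\Pi(v_k)$ meet at a finite point of $\mathbb H^3$ and the triple intersection $D(v_i)\cap D(v_j)\cap D(v_k)$ is a nonempty spherical triangle. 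More decisively, the polyhedron $Q(\mathcal P)=\bigcap_v H(v)$ produced by Andreev's Theorem is \emph{compact}; any ideal point not lying in some $\operatorname{int}D(v)$ would be a limit point of $Q(\mathcal P)$, contradicting compactness. Hence $\bigcup_{v\in V}\operatorname{int}D(v)=\hat{\mathbb C}$ and there is no global interstice at all. Your ``three lines'' angle count does not salvage this: mapping a hypothetical common point to $\infty$ produces three lines bounding half-planes, but those lines are not concurrent at a finite point, and in any case the relevant triple intersection is nonempty.

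Irreducibility must therefore be argued without invoking a point missed by every disk. A clean route in the non-obtuse regime is to exhibit, for each $v_m\in V$, a point covered only by $D(v_m)$: since every overlap angle is strictly less than $\pi/2$, the (Euclidean or spherical) centre of $D(v_m)$ is not contained in any adjacent disk, and non-adjacent disks are disjoint from $D(v_m)$ altogether. Equivalently, each face of the compact polyhedron $Q(\mathcal P)$ is nondegenerate, so deleting the half-space $H(v_m)$ yields a strictly larger, non-compact intersection, whence $\bigcup_{v\neq v_m}D(v)\subsetneq\hat{\mathbb C}$.
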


Notice that $\widetilde{\mathcal P}$ can be transformed into a normalized pattern through a unique linear or anti-linear fractional map, which indicates the following lemma.
\begin{lemma}
If $W_0\neq\emptyset$, then $M^\star_{IG}\neq \emptyset$ is a smooth manifold of dimension $|E|$.
\end{lemma}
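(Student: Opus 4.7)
The plan is to establish non-emptiness and the smooth-manifold structure separately. For non-emptiness, I would first invoke Lemma~\ref{L-2-1} to produce some $\widetilde{\Theta}\in W_0\cap(0,\pi/2)^{|E|}$ (an explicit choice is the function $\Theta_{s_0}$ from the proof of that lemma). By Lemma~\ref{L-2-2} there exists an irreducible circle pattern $\widetilde{\mathcal P}$ on $\hat{\mathbb C}$ with overlap angles $\widetilde{\Theta}$, which determines a point of $M_{IG}$ once we record the centers and (spherical) radii of its disks. Following the remark immediately preceding the lemma, I would then apply a suitable linear or anti-linear Möbius transformation to move this point into the normalization slice cut out by $\mathrm{(x4)}$ and $\mathrm{(x5)}$.

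To carry out the normalization concretely, use the transitivity of $\operatorname{PSL}(2,\mathbb C)$ on disks in $\mathbb S^2$ to send $D(v_a)$ to the hemisphere $\{|z|\leq 1\}$, which under the chosen spherical metric is precisely the disk of spherical radius $\pi/2$ centered at $0$. The residual $3$-dimensional stabilizer $\operatorname{PSU}(1,1)$ has exactly enough freedom to further arrange that $D(v_b)$ is a hemisphere with center on the real axis and that $D(v_c)$ is also a hemisphere; post-composing with the reflection $z\mapsto\bar z$ if necessary achieves $\operatorname{Re}(z_b)>0$ and $\operatorname{Im}(z_c)>0$. The six-parameter freedom of the extended Möbius group is thus absorbed by the six real conditions on $D(v_a),D(v_b),D(v_c)$, and this produces an element of $M^\star_{IG}$.

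For the smooth-manifold claim, note that $M_{IG}$ is open in $M$ and hence a smooth manifold of dimension $\dim M=3|V|=|E|+6$ by the Euler-formula calculation already performed in the text. The conditions imposed by $\mathrm{(x4)}$ and $\mathrm{(x5)}$ consist of the equalities $\operatorname{Re}(z_a)=\operatorname{Im}(z_a)=0$, $\operatorname{Im}(z_b)=0$, and $r_a=r_b=r_c=\pi/2$, together with the open inequalities $\operatorname{Re}(z_b)>0$ and $\operatorname{Im}(z_c)>0$. Because the six equalities involve six distinct coordinate functions of the product manifold $M=\hat{\mathbb C}^{|V|}\times(0,\pi)^{|V|}$, their differentials are automatically linearly independent, so they cut out a smooth embedded submanifold of codimension $6$; intersecting with the open inequalities preserves smoothness. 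Therefore $M^\star_{IG}$ is a smooth manifold of dimension $(|E|+6)-6=|E|$.

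The main subtlety I expect is in the normalization step: one must verify that every pattern in $M_{IG}$ admits such a (linear or anti-linear) normalizing Möbius map, and that this map is essentially unique. This amounts to showing that the action of the extended Möbius group on ordered triples of pairwise-overlapping disks satisfying $\mathrm{(c2)}$ is free modulo the $\mathbb Z/2$ corresponding to orientation reversal. I would argue this by the standard fact that three disks meeting pairwise with angles whose sum exceeds $\pi$ bound a genuine spherical triangle, whose stabilizer in the orientation-preserving Möbius group is trivial; the anti-linear freedom is then exactly what is needed to toggle the sign of $\operatorname{Im}(z_c)$, matching the uniqueness statement of Lemma~\ref{L-2-2}.
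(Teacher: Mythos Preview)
Your proposal is correct and follows the same route as the paper, which offers only the one-sentence justification that $\widetilde{\mathcal P}$ ``can be transformed into a normalized pattern through a unique linear or anti-linear fractional map.'' Your codimension-six argument for the dimension count is exactly what is implicit in the paper's setup. The only place to tighten is the PSU$(1,1)$ step: dimension counting alone does not show that the stabilizer can force $D(v_b)$ and $D(v_c)$ to be hemispheres simultaneously, and the cleanest justification is the one the paper itself uses later (in the proof of Theorem~\ref{T-3-1}): under condition $\mathrm{(c2)}$ the three associated half-spaces in $\mathbb H^3$ meet at a single interior point, and a M\"obius map sending that point to the center of the ball model turns all three boundary circles into great circles at once, after which an $\mathrm{SO}(3)$ rotation and possibly $z\mapsto\bar z$ achieve $\mathrm{(x4)}$.
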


Since $W_0$ is an open subset of $(0,\pi)^{|E|}$, it follows that
\[
\dim(M^\star_{IG})=\dim (W_0)=|E|.
\]
Meanwhile, assuming Theorem~\ref{T-1-5}, we assert the properness of the test map.
\begin{lemma}
Under the hypothesis that $W_0\neq\emptyset$, the map $f:M^\star_{IG}\to W_0$ is proper.
\end{lemma}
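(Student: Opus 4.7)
The plan is to deduce properness from Theorem~\ref{T-1-5} by transferring its conclusion from the $(\text{anti-})$Möbius orbit to the normalized slice $M^\star_{IG}$.

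First I would fix a compact set $K\subset W_0$ and pick any sequence $\{(\mathbf{z}_n,\mathbf{r}_n)\}\subset f^{-1}(K)$. Passing to a subsequence I may assume $\Theta_n:=f(\mathbf{z}_n,\mathbf{r}_n)\to\Theta_\infty\in K$. Each configuration determines an irreducible circle pattern $\mathcal{P}_n$ with contact graph the $1$-skeleton of $T$ and overlap angles $\Theta_n$. Since $K\subset W_0\subset W$ is compact in $W$, Theorem~\ref{T-1-5} supplies, after a further subsequence, linear or anti-linear fractional maps $\psi_n$ together with an irreducible circle pattern $\mathcal{P}_\infty$ of the same contact graph such that $\psi_n(\mathcal{P}_n)$ converges disk-by-disk to $\mathcal{P}_\infty$. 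Continuity of inversive distance then forces the overlap angle function of $\mathcal{P}_\infty$ to be $\Theta_\infty$, which lies in $W_0$.

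Next I would push the convergence into the normalized slice. Since $\mathcal{P}_\infty$ is irreducible with the right combinatorics, its base-triangle disks $D_\infty(v_a),D_\infty(v_b),D_\infty(v_c)$ are non-degenerate, pairwise distinct and pairwise intersecting, so there is a unique linear or anti-linear fractional map $\phi_\infty$ that sends $\mathcal{P}_\infty$ to a pattern satisfying $\mathrm{\mathbf{(x4)}}$ and $\mathrm{\mathbf{(x5)}}$. More generally the assignment $\mathcal{Q}\mapsto\phi_\mathcal{Q}$ that produces the unique normalizer of an irreducible pattern with combinatorics $T$ is continuous, because the normalizing $(\text{anti-})$Möbius transformation is determined continuously by the non-degenerate triple of base-triangle disks. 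Applying this continuity to $\psi_n(\mathcal{P}_n)\to\mathcal{P}_\infty$, the normalizers $\phi_{\psi_n(\mathcal{P}_n)}$ converge to $\phi_\infty$. But $\mathcal{P}_n$ is already in normalized form, so uniqueness of the normalizer forces $\phi_{\psi_n(\mathcal{P}_n)}=\psi_n^{-1}$; hence
\[
\mathcal{P}_n=\psi_n^{-1}\bigl(\psi_n(\mathcal{P}_n)\bigr)\longrightarrow\phi_\infty(\mathcal{P}_\infty).
\]
The limit is normalized, irreducible, has contact graph $T$ and overlap angles $\Theta_\infty\in W_0$, so it represents a point of $M^\star_{IG}$. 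This produces the required convergent subsequence and establishes properness.

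The main obstacle I anticipate is precisely this reconciliation of the two normalizations: Theorem~\ref{T-1-5} controls limits only up to the six-dimensional $(\text{anti-})$Möbius group, whereas properness of $f$ demands convergence of the genuinely normalized representatives. The decisive ingredient is the continuous dependence of the unique normalizing fractional map on the pattern, which in effect realizes $M^\star_{IG}$ as a continuous section of the quotient space appearing in Theorem~\ref{T-1-5}; this continuity is available only because the irreducibility built into the conclusion of Theorem~\ref{T-1-5} prevents any base-triangle disk of the limit from collapsing to a point or swallowing the whole sphere.
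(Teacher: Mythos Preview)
Your argument is correct. The only difference from the paper is one of presentation: the paper's proof is a one-liner that invokes Theorem~\ref{T-1-5} and stops, relying on the fact that the \emph{proof} of Theorem~\ref{T-1-5} already works directly in the normalized slice (it begins by imposing exactly the normalization $\mathrm{\mathbf{(x5)}}$, so for a sequence in $M^\star_{IG}$ no M\"obius map is applied and convergence occurs in the configuration coordinates themselves; that the limit still satisfies $\mathrm{\mathbf{(x4)}}$ then follows from non-degeneracy of the limit base triangle via Lemma~\ref{L-4-1}). You instead take the \emph{statement} of Theorem~\ref{T-1-5} at face value---convergence only modulo the $(\text{anti-})$M\"obius group---and then explicitly pass to the normalized section by invoking continuity of the unique normalizer. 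This buys you a self-contained argument that does not peek into the proof of Theorem~\ref{T-1-5}, at the cost of an extra paragraph; the paper's version is shorter but leans on the internal structure of that proof.
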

\begin{proof}
For any compact subset $\Omega$ of $W_0$, we need to check  $f^{-1}(\Omega)$ is compact. It suffices to show any sequence $\{(\mathbf{z}_n,\mathbf{r}_n)\}$ in $f^{-1}(\Omega)$ contains a convergent subsequence. Note that each $(\mathbf{z}_n,\mathbf{r}_n)$ gives a normalized \textbf{irreducible} circle patterns $\mathcal P_n$ on $\hat{\mathbb C}$ with contact graph isomorphic to the $1$-skeleton of $T$ and overlap angle function staying in $\Omega$. Applying Theorem~\ref{T-1-5}, we immediately conclude the lemma.
\end{proof}

By manifold theory (see, e.g.,~\cite{Guillemin-Pollack,Hirsch,Milnor,Lee}), the topological degree of $f$ is well-defined. Furthermore, we have the following property.
\begin{proposition}\label{P-2-5}
Suppose $W_0\neq\emptyset$. Then $\deg(f)=\pm1$.
\end{proposition}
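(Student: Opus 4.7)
My plan is to evaluate $\deg(f)$ at the regular value $\widetilde{\Theta} \in W_0 \cap (0,\pi/2)^{|E|}$ already produced by Sard's theorem in the discussion preceding Lemma~\ref{L-2-2}. Since $f$ is proper and smooth between manifolds of the same dimension $|E|$, the degree is the signed count of preimages at any regular value. Hence it suffices to prove that $f^{-1}(\widetilde{\Theta})$ consists of exactly one point, for then regularity at that point forces $\deg(f) = \pm 1$.

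Lemma~\ref{L-2-2} already supplies an irreducible circle pattern realizing $\widetilde{\Theta}$, unique modulo the group $G$ of linear and anti-linear fractional transformations of $\hat{\mathbb C}$, which is a real Lie group of dimension $6$ with two connected components. Counting preimages in $M^\star_{IG}$ therefore reduces to showing that the normalization $\mathrm{(x4)}, \mathrm{(x5)}$ picks out exactly one representative per $G$-orbit. The equalities $z_a = 0$, $\mathrm{Im}\,z_b = 0$, $r_a = r_b = r_c = \pi/2$ impose exactly $6$ real conditions, matching $\dim G$, while the open inequalities $z_b > 0$ and $\mathrm{Im}\,z_c > 0$ pin down a connected component — the latter in particular distinguishing the orientation-preserving from the orientation-reversing branch of $G$. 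Combined with the freeness of the $G$-action on irreducible patterns (which have more than three disks since $|V| > 4$), this dimension and component count should single out precisely one normalized representative and yield $|f^{-1}(\widetilde{\Theta})| = 1$.

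The step I expect to require the most care is verifying that the normalization is actually achievable, i.e., that for any irreducible pattern $\widetilde{\mathcal P}$ realizing $\widetilde{\Theta}$ some element of $G$ sends the three disks $D(v_a), D(v_b), D(v_c)$ to hemispheres of $\mathbb S^2$ arranged according to $\mathrm{(x4)}$. I would handle this by constructing a model configuration of three hemispheres whose overlap angles match those of $\widetilde{\mathcal P}$ on the triangle $v_a v_b v_c$ — the constraints in $\mathrm{(c2)}$ ensure this model exists — and then invoking the fact that two triples of pairwise overlapping disks with the same three overlap angles are related by a unique element of $G$. Once this is in hand, $|f^{-1}(\widetilde{\Theta})| = 1$ together with the regularity of $\widetilde{\Theta}$ gives $\deg(f) = \pm 1$, as desired.
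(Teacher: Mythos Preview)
Your proposal is correct and follows essentially the same route as the paper: compute $\deg(f)$ at the regular value $\widetilde{\Theta}$ and invoke Lemma~\ref{L-2-2} to conclude that $f^{-1}(\widetilde{\Theta})$ is a single point. The paper handles the normalization step more tersely---it records just before Proposition~\ref{P-2-5} that ``$\widetilde{\mathcal P}$ can be transformed into a normalized pattern through a unique linear or anti-linear fractional map''---whereas you spell out the dimension count and the model-configuration argument behind that sentence; but the underlying logic is the same.
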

\begin{proof}
Because $W_0$ is connected, we can choose an arbitrary regular value of $f$ to calculate the degree. In particular, we derive
\[
\deg(f)=\sum_{(\mathbf{z},\mathbf{r})\in f^{-1}(\widetilde{\Theta})} \operatorname{sgn}\left(\operatorname{det}(Df(\mathbf{z},\mathbf{r}))\right),
\]
where $Df(\mathbf{z},\mathbf{r})$ is the Jacobian matrix of $f$ at $(\mathbf{z},\mathbf{r})$. Meanwhile, it follows from Lemma~\ref{L-2-2} that $f^{-1}(\widetilde{\Theta})$ consists of a single point, which yields $\deg(f)=\pm1$.
\end{proof}

According to a basic property of topological degree (see, e.g.,~\cite{Guillemin-Pollack,Hirsch,Milnor,Lee}), the proposition below is  straightforward.
\begin{proposition}\label{P-2-6}
The map $f:M^\star_{IG}\to W_0$ is surjective.
\end{proposition}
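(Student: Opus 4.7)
If $W_0=\emptyset$, the claim is vacuous, so assume $W_0\neq\emptyset$. The plan is the standard degree-theoretic argument: a proper smooth map between equidimensional manifolds with nonzero topological degree is surjective. Concretely, I will combine the three facts already available: (i) $f:M^\star_{IG}\to W_0$ is smooth between manifolds with $\dim M^\star_{IG}=\dim W_0=|E|$; (ii) $f$ is proper on $W_0$; and (iii) $\deg(f)=\pm 1$ from Proposition~\ref{P-2-5}.

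First I would observe that $W_0$ is connected. Indeed, conditions $\mathrm{\mathbf{(x1)}}$ and $\mathrm{\mathbf{(c2)}}$--$\mathrm{\mathbf{(c4)}}$ are all expressed by (strict) linear inequalities on the values $\Theta(e)$, so $W_0$ is a convex open subset of $(0,\pi)^{|E|}$ and hence connected. This is what guarantees that the integer $\deg(f)$ computed in Proposition~\ref{P-2-5} at the specific regular value $\widetilde{\Theta}$ is the same integer governing the preimage count at every other regular value in $W_0$.

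Next I would argue that the image is closed and open-dense. Properness of $f$ implies that $f(M^\star_{IG})$ is closed in $W_0$: for any $y\in W_0$ and any sequence $y_n=f(x_n)$ converging to $y$, the points $y_n$ eventually lie in a compact neighborhood of $y$ in $W_0$, whose preimage under $f$ is compact, yielding a convergent subsequence of $\{x_n\}$ whose limit maps to $y$. By Sard's Theorem the set $R\subset W_0$ of regular values is dense. For any $y\in R$, the degree formula
\[
\pm 1 \;=\; \deg(f) \;=\; \sum_{x\in f^{-1}(y)} \operatorname{sgn}\bigl(\det Df(x)\bigr)
\]
forces $f^{-1}(y)\neq\emptyset$, so $R\subset f(M^\star_{IG})$. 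Hence $f(M^\star_{IG})$ is a closed subset of $W_0$ that contains the dense set $R$, and therefore equals $W_0$.

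There is no real obstacle in this step; the nontrivial ingredients (properness of $f$ via Theorem~\ref{T-1-5}, nonvanishing of the degree via Lemma~\ref{L-2-2}, and the dimension count) have all been established beforehand. The only point to be careful about is invoking connectedness of $W_0$ so that the single computation of $\deg(f)$ at $\widetilde{\Theta}$ transfers to every regular value; the convex description of $W_0$ handles this cleanly.
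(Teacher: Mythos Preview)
Your argument is correct and is exactly the approach the paper takes: the paper simply declares the surjectivity ``a basic property of topological degree'' following Proposition~\ref{P-2-5}, while you have unpacked that standard argument (proper $\Rightarrow$ closed image, Sard $\Rightarrow$ regular values dense, nonzero degree $\Rightarrow$ each regular value is hit). The connectedness of $W_0$ that you supply was already used in the paper's proof of Proposition~\ref{P-2-5}, so nothing new is needed here.
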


It is timely to show the main theorem.
\begin{proof}[\textbf{Proof of Theorem~\ref{T-1-1} (assuming Theorem~\ref{T-1-5})}]
For each positive integer $n$, set
\[
\Theta_n=\frac{n\Theta}{n+1}+\frac{\pi}{3(n+1)}.
\]
Clearly, $\Theta_n\in W_0\subset W$. Owing to Proposition~\ref{P-2-6}, there exists an \textbf{irreducible} circle pattern $\mathcal P_n$ on $\hat{\mathbb C}$ realizing the data $(T, \Theta_n)$.  By Theorem~\ref{T-1-5}, we can pick up a subsequence of $\{\mathcal P_n\}$ convergent to an \textbf{irreducible} circle pattern $\mathcal P$ with contact graph isomorphic to the $1$-skeleton of $T$. Moreover, it is easy to see $\mathcal P$ has overlap angles given by $\Theta$, which asserts the existence part of the theorem. Finally, the rigidity part follows from the rigidity of the corresponding hyperbolic polyhedra, which will be asserted in Theorem~\ref{T-1-3}.
\end{proof}

\section{Generalizing Andreev's Theorem}\label{S-3a}
We first recall the correspondence between circle patterns and hyperbolic polyhedra observed by Thurston~\cite[Chap. 13]{Thurston}. To be specific, given a circle pattern $\mathcal P=\{D(v)\}_{v\in V}$ on $\hat{\mathbb C}=\partial\mathbb H^3$ with contact graph isomorphic to the $1$-skeleton of a triangulation $T$, we associate each disk  $D(v)$ with the half-space $H(v)\subset\mathbb H^3$ identical to the closed convex hull of ideal points in $\hat{\mathbb C}\setminus D(v)\subset\partial \mathbb H^3$. This gives rise to a map sending each pattern $\mathcal P$ to a convex set $Q(\mathcal P)\subset \mathbb H^3$, where $Q({\mathcal P})=\cap_{v\in V}H(v)$. Moreover, if $Q(\mathcal P)$ is a compact polyhedron, then it has boundary decomposition dual to $T$ and dihedral angles equal to the corresponding overlap angles of $\mathcal P$.

Unfortunately, $Q(\mathcal P)$ is possibly not a compact polyhedron. A natural problem asks which circle patterns induce compact polyhedra. Here we give a partial answer.

\begin{theorem}\label{T-3-1}
Let $T$ be a triangulation of the sphere with more than four vertices and let $\mathcal P=\{D(v)\}_{v\in V}$ be a circle pattern on $\hat{\mathbb C}$ with contact graph isomorphic to the $1$-skeleton of $T$ and overlap angle function satisfying condition $\mathrm{\mathbf{(c2)}}$. Assume the following condition holds:
\begin{itemize}
\item[$\mathrm{\mathbf{(cr)}}$]Whenever the edges $[v_\alpha,v_\beta],[v_\beta,v_\gamma],[v_\gamma,v_\alpha]$ form a simple closed curve separating the vertices of $T$, then $D(v_\alpha)\cap D(v_\beta)\cap D(v_\gamma)=\emptyset$.
\end{itemize}
Then $Q(\mathcal P)$ is a compact convex hyperbolic polyhedron with boundary decomposition  dual to $T$ and dihedral angles equal to the corresponding overlap angles of $\mathcal P$.
\end{theorem}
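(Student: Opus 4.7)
My plan is to verify that $Q(\mathcal{P})=\bigcap_{v\in V}H(v)$ is a compact convex polyhedron realizing the dual cell structure of $T$ with the prescribed dihedral angles, by analyzing which half-spaces contribute faces, edges, and vertices.

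First, I would establish the local correspondence. For each edge $e=[v_\alpha,v_\beta]\in E$, the disks $D(v_\alpha),D(v_\beta)$ overlap with angle $\Theta(e)\in(0,\pi)$, so the boundary planes $\partial H(v_\alpha),\partial H(v_\beta)$ meet along a geodesic $\ell_{\alpha\beta}\subset\mathbb{H}^3$, and the dihedral angle of $H(v_\alpha)\cap H(v_\beta)$ along $\ell_{\alpha\beta}$ equals $\Theta(e)$ by the standard calculation of the ideal dihedral angle from overlap. Conversely, whenever $u,w\in V$ are non-adjacent in $T$, the contact-graph hypothesis forces $D(u)\cap D(w)=\emptyset$, so $\partial H(u)\cap \partial H(w)=\emptyset$ in $\mathbb{H}^3$, and these planes cannot contribute common edges or vertices to $Q(\mathcal{P})$.

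Second, I would produce the candidate vertices from condition $\mathrm{\mathbf{(c2)}}$. For each triangle $f=(v_\alpha,v_\beta,v_\gamma)$ of $T$, the inequality $\Theta(e_1)+\Theta(e_2)+\Theta(e_3)>\pi$ together with the three triangle inequalities is exactly the criterion for the existence of a spherical triangle realizing the link of a finite trihedral corner; hence the three planes $\partial H(v_\mu)$ meet at a unique point $p_f\in\mathbb{H}^3$, which is a vertex of the triple intersection $H(v_\alpha)\cap H(v_\beta)\cap H(v_\gamma)$ with the correct orientation. Third, I would rule out extraneous structure and establish compactness. For any triple $(v_\alpha,v_\beta,v_\gamma)$ whose edges form a simple closed curve separating the vertices of $T$ but not bounding a face, condition $\mathrm{\mathbf{(cr)}}$ gives $D(v_\alpha)\cap D(v_\beta)\cap D(v_\gamma)=\emptyset$, which translates to the statement that the three hemispheres $\mathbb{H}^3\setminus H(v_\mu)$ share no ideal boundary point. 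Combined with the topological fact that the separating cycle partitions $V$ into two non-trivial sides, I would argue that even if the three planes meet at a finite point $p$, the corresponding corner of $\bigcap_\mu H(v_\mu)$ at $p$ is cut off from $Q(\mathcal{P})$ by a half-space $H(u)$ for some vertex $u$ on the appropriate side of the cycle. A parallel analysis of maximal uncovered regions (each would have to be bounded by a cycle of disks which is either a face, ruled out by $\mathrm{\mathbf{(c2)}}$, or a separating cycle, ruled out by $\mathrm{\mathbf{(cr)}}$) then shows $\bigcup_v D(v)=\hat{\mathbb{C}}$, so $Q(\mathcal{P})\cap\partial\mathbb{H}^3=\emptyset$ and $Q(\mathcal{P})$ is compact. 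Assembling these facts, each face $F(v):=Q(\mathcal{P})\cap\partial H(v)$ is a hyperbolic polygon whose edges are the $\ell_{\alpha\beta}$ for $T$-neighbors $v_\beta$ of $v$, whose vertices are the $p_f$ for $T$-faces $f$ containing $v$, and whose dihedral angles recover $\Theta$; thus the boundary cell complex of $Q(\mathcal{P})$ is dual to $T$.

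The main obstacle is the third step: leveraging the disk-intersection condition $\mathrm{\mathbf{(cr)}}$, rather than an angle condition such as $\mathrm{\mathbf{(c3)}}$, to exclude extraneous vertices from separating $3$-circuits and to force covering of the sphere. The key difficulty is converting the ideal-boundary statement of $\mathrm{\mathbf{(cr)}}$ into a bulk obstruction in $\mathbb{H}^3$; this will likely require a careful correlation between the orientation of a trihedral corner and the non-emptiness of the triple intersection of the corresponding disks on $\partial\mathbb{H}^3$, together with exploiting the global topology of the separating cycle to pin down a blocking half-space.
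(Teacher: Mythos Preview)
Your first two steps---the edge/dihedral correspondence and the use of condition $\mathrm{\mathbf{(c2)}}$ to produce a finite trihedral vertex $p_f\in\mathbb H^3$ for each face $f$ of $T$ via positive-definiteness of the Gram matrix---match the paper exactly. The divergence, and the real problem, is in your third step.

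Your covering argument is not sound as stated. You want to show $\bigcup_v D(v)=\hat{\mathbb C}$ by analyzing a maximal uncovered region and arguing that its bounding cycle is ``ruled out by $\mathrm{\mathbf{(cr)}}$'' when it is a separating $3$-cycle. But $\mathrm{\mathbf{(cr)}}$ asserts that the \emph{intersection} $D(v_\alpha)\cap D(v_\beta)\cap D(v_\gamma)$ is empty; this is entirely compatible with---indeed suggestive of---an interstice bounded by arcs of $\partial D(v_\alpha),\partial D(v_\beta),\partial D(v_\gamma)$. So $\mathrm{\mathbf{(cr)}}$ does not obstruct uncovered regions in the way you claim, and you also do not address cycles of length $\ge 4$. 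Likewise, your plan to ``cut off'' a spurious trihedral corner coming from a separating $3$-cycle by locating a blocking half-space $H(u)$ is only a hope at this point; nothing in $\mathrm{\mathbf{(cr)}}$ by itself produces such a $u$.

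The paper avoids all of this by never arguing about covering or about spurious corners globally. Its key device is a short combinatorial lemma (Lemma~\ref{L-3-2}): under the contact-graph hypothesis and $\mathrm{\mathbf{(cr)}}$, a nonempty triple intersection $D(v_i)\cap D(v_j)\cap D(v_k)$ forces $v_i,v_j,v_k$ to span a triangle of $T$. This is then used \emph{locally on each face}. Writing $\mathcal F(v)=\big(\bigcap_{u\neq v}H(u)\big)\cap\Pi(v)$ and letting $u_1,\dots,u_d$ be the neighbors of $v$ in cyclic order, one first drops all non-neighbors (since $\Pi(v)\subset H(u)$ when $D(v)\cap D(u)=\emptyset$), and then observes that for non-consecutive neighbors $u_\mu,u_\eta$ the geodesic $\Pi(u_\mu)\cap\Pi(v)$ already lies in $H(u_\eta)$: otherwise $\partial D(u_\mu)\cap\partial D(v)\cap D(u_\eta)\neq\emptyset$, contradicting Lemma~\ref{L-3-2}. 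Hence $\mathcal F(v)$ is cut out on $\Pi(v)$ by only the consecutive half-spaces $H(u_{\mu-1}),H(u_{\mu+1})$, yielding exactly the compact convex polygon with vertices $q(f_1),\dots,q(f_d)$. Compactness of $Q(\mathcal P)$ and the dual combinatorics then fall out simultaneously, with no separate covering or ``extraneous vertex'' analysis needed. This is the missing idea in your outline: use $\mathrm{\mathbf{(cr)}}$ not on the ideal boundary globally, but to control which neighbor half-spaces actually cut each face.
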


\begin{proof}
For each $v\in V$, set $\Pi(v)=\partial_{\mathbb H^3} H(v)$. Let $v_i,v_j,v_k$ be the vertices of a triangle of $T$ and let $e_i,e_j,e_k$ be the edges opposite to $v_i,v_j,v_k$, respectively. We first claim $\Pi(v_i),\Pi(v_j),\Pi(v_k)$ intersect at a point in $\mathbb H^3$, which is reduced to checking the Gram matrix $J(v_i,v_j,v_k)$ of the geodesic planes $\Pi(v_i),\Pi(v_j),\Pi(v_k)$ is positive-definite. Note that $J(v_i,v_j,v_k)$ is equal to
\[
\left[\begin{array}{ccc}
1 & -\cos \theta_k & -\cos \theta_j \\
-\cos \theta_k & 1 & -\cos \theta_i \\
-\cos \theta_j & -\cos \theta_i & 1
\end{array}\right],
\]
where $\theta_l=\Theta(e_l)$ for $l=i,j,k$. Owing to Sylvester's criterion, we need to prove the $m$-th leading principal minor of $J(v_i,v_j,v_k)$ is positive for $m=1,2,3$. The cases that $m=1,2$ are straightforward. For $m=3$, it is equivalent to showing the determinant of $J(v_i,v_j,v_k)$ is also positive. Indeed, a routine calculation indicates the determinant is equal to
\[
-4 \cos\dfrac{\theta_i+\theta_j+\theta_k}{2}
\cos\dfrac{\theta_i+\theta_j-\theta_k}{2}
\cos\dfrac{\theta_j+\theta_k-\theta_i}{2}
\cos\dfrac{\theta_k+\theta_i-\theta_j}{2},
\]
Under condition $\mathrm{\mathbf{(c2)}}$, the above formula is positive, which asserts the claim.

Meanwhile, by Steinitz's Theorem, we have an abstract compact polyhedron $Q(T)$ with boundary $\partial Q(T)$ being a cellular decomposition dual to $T$. We now reduce the theorem to showing there exists a homeomorphism between $\partial Q(T)$ and $\partial_{\mathbb H^3} Q(\mathcal P)$. For closed subsets $A, B$ of certain topological space, recall Leibniz's rule:
\[
\partial(A\cap B)=(\partial A\cap B)\cup (A\cap\partial B).
\]
It follows that
\[
\partial_{\mathbb H^3} Q(\mathcal P)\,=\,\partial_{\mathbb H^3} \left(\cap_{v\in V}H(v)\right)\,=\,\cup_{v\in V} \mathcal F(v),
\]
where
\begin{equation}\label{E-3-1}
\mathcal F(v)=\big( \cap_{u\in V, u\neq v}H(u)\big)\cap\partial_{\mathbb H^3}H(v)=\big( \cap_{u\in V, u\neq v}H(u)\big)\cap\Pi(v).
\end{equation}
For each $u\in V\setminus\{v\}$ which is not adjacent to $v$, we claim
\begin{equation}\label{E-3-2}
\Pi(v)\subset H(u).
\end{equation}
Otherwise, $\partial D(v)\cap D(u)\neq \emptyset$, which contradicts that $D(v),D(u)$ are disjoint. Combining~\eqref{E-3-1} and~\eqref{E-3-2}, we derive
\begin{equation}\label{E-3-3}
\mathcal F(v)=\big(\cap_{\mu=1}^d H(u_\mu)\big)\cap \Pi(v),
\end{equation}
where $u_1,\cdots,u_d$, in anticlockwise order, are all adjacent vertices of $v$.

Let $\sum(v)$ be the face of $\partial Q(T)$ dual to $v$. Then $\sum(v)$ is a topological polygon marked by triangles $f_1,\cdots,f_d$ of $T$, where $f_\tau$ has vertices $v, u_\tau, u_{\tau+1}$ for $\tau=1,\cdots,d$ (set $u_{d+1}=u_1$). We will prove there exists a mark-preserving homeomorphism between $\sum(v)$ and $\mathcal F(v)$. To this end, let $\partial_{\Pi} \mathcal F(v)$ denote the boundary of $\mathcal F(v)$ in the geodesic plane $\Pi(v)$. Applying Leibniz's rule to~\eqref{E-3-3}, we obtain
\begin{equation}\label{E-3-4}
\partial_{\Pi} \mathcal F(v)=\cup_{\mu=1}^d \left(\big(\,\cap_{\eta=1,\eta\neq\mu}^dH(u_\eta)\,\big)\cap \Pi(u_\mu)\cap \Pi(v)\right).
\end{equation}
When $\eta\neq \mu-1,\mu,\mu+1$, note that
\[
\Pi(u_\mu)\cap \Pi(v)\subset H(u_\eta).
\]
Otherwise, $\partial D(u_\mu)\cap\partial D(v)\cap D(u_\eta)\neq \emptyset$, which contradicts~\eqref{E-3-5} in the following Lemma~\ref{L-3-2}. We thus simplify formula~\eqref{E-3-4} as
\[
\partial_{\Pi}\mathcal F(v)=\cup_{\mu=1}^d\left( H(u_{\mu-1})\cap H(u_{\mu+1})\cap \Pi(u_\mu)\cap \Pi(v)\right):=\cup_{\mu=1}^d \mathcal L_{\mu}.
\]
It is easy to see $\mathcal L_{\mu}$ is a geodesic segment with endpoints $q(f_{\mu-1}),q(f_{\mu})$, where
 \[
 q(f_\tau)=\Pi(u_\tau)\cap\Pi(u_{\tau+1})\cap\Pi(v)
 \]
for $\tau=1,\cdots,d$. That means $\mathcal F(v)$ is a convex polygon with vertices $q(f_1),\cdots,q(f_d)$. Hence the map
\[
(f_1,\cdots,f_d)\longmapsto(q(f_1),\cdots,q(f_d))
\]
induces a mark-preserving homeomorphism between $\sum(v)$ and $\mathcal F(v)$.

Using the Pasting Lemma, we thus obtain a global homeomorphism between $\partial Q(T)$ and $\partial_{\mathbb H^3} Q(\mathcal P)$, which implies $Q(\mathcal P)$ is a compact polyhedron with boundary decomposition dual to $T$. Finally, the property that $Q(\mathcal P)$ has dihedral angles equal to the corresponding overlap angles of $\mathcal P$ is straightforward.
\end{proof}

The following lemma, required in the above proof,  unveils some properties behind  conditions of Theorem~\ref{T-3-1}.
\begin{lemma}\label{L-3-2}
Let $T,\mathcal P$ be as in Theorem~\ref{T-3-1}. If $v_i,v_j,v_k$ are distinct vertices of $T$ such that
\begin{equation}\label{E-3-5}
D(v_i)\cap D(v_j)\cap D(v_k)\neq\emptyset,
\end{equation}
then $v_i,v_j,v_k$ are the vertices of a triangle of $T$. As a result,
\begin{equation}\label{E-3-6}
D(v_i)\cap D(v_j)\cap D(v_k)\cap D(v_l)=\emptyset
\end{equation}
for any four distinct vertices $v_i,v_j,v_k,v_l$ of $T$.
\end{lemma}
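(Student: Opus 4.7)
The plan is to extract both conclusions from the contact-graph combinatorics, with condition $\mathrm{\mathbf{(cr)}}$ as the essential input.

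For the first assertion, observe that if $D(v_i)\cap D(v_j)\cap D(v_k)\neq\emptyset$, then each pairwise intersection $D(v_a)\cap D(v_b)$ is nonempty, so each pair among $\{v_i,v_j,v_k\}$ is adjacent in the contact graph; i.e., the three edges $[v_i,v_j]$, $[v_j,v_k]$, $[v_k,v_i]$ all belong to $T$. These form a simple closed edge-curve $\gamma$. On the triangulated $2$-sphere $T$ (with $|V|>4$), such a $\gamma$ cuts $S^2$ into two closed disks, and the following dichotomy holds: either one of the two disks is itself a face of $T$ (so $\gamma$ bounds a triangle of $T$), or both disks contain at least one vertex of $T$ not on $\gamma$ (so $\gamma$ separates the vertices of $T$). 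In the latter case, condition $\mathrm{\mathbf{(cr)}}$ forces $D(v_i)\cap D(v_j)\cap D(v_k)=\emptyset$, contradicting the hypothesis. Hence $v_i,v_j,v_k$ must bound a triangle of $T$.

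For the second assertion, suppose for contradiction that four distinct vertices $v_i,v_j,v_k,v_l$ satisfy $D(v_i)\cap D(v_j)\cap D(v_k)\cap D(v_l)\neq\emptyset$. Every triple among them then has nonempty triple disk intersection, so by the first part every such triple spans a triangle of $T$. Focus on $v_i$: its link in $T$ is a simple cycle, and the three edges $[v_j,v_k]$, $[v_k,v_l]$, $[v_l,v_j]$ all appear in this link because the faces $(v_i,v_j,v_k)$, $(v_i,v_k,v_l)$, $(v_i,v_l,v_j)$ all belong to $T$. A simple cycle that contains a $3$-cycle as a subgraph must coincide with it, so the link of $v_i$ is exactly $(v_j,v_k,v_l)$ and the closed star of $v_i$ is a triangulated disk bounded by this $3$-cycle. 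Adjoining the fourth face $(v_j,v_k,v_l)$ produces a subcomplex of $T$ that is a triangulated $2$-sphere on the vertex set $\{v_i,v_j,v_k,v_l\}$; since $T$ is itself a connected $2$-sphere, this subcomplex must exhaust $T$, contradicting $|V|>4$.

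The only truly conceptual step is the $3$-cycle dichotomy used in the first paragraph: that in a triangulation of $S^2$ with more than three vertices, any $3$-cycle of edges either bounds a face of $T$ or separates the remaining vertices into two nonempty sets. Everything else is a mechanical consequence of this fact together with condition $\mathrm{\mathbf{(cr)}}$, which is tailored precisely to rule out the separating alternative and thereby force every overlapping triple of disks to correspond to an honest face of $T$.
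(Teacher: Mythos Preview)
Your proof is correct and follows the same approach as the paper. The first assertion is argued identically: pairwise adjacency from nonempty pairwise intersection, then the dichotomy that a $3$-cycle in a spherical triangulation either bounds a face or separates the remaining vertices, with condition $\mathrm{\mathbf{(cr)}}$ eliminating the separating case. For the second assertion the paper simply writes ``remembering $T$ is not the boundary of a tetrahedron, we immediately derive~\eqref{E-3-6} from~\eqref{E-3-5}''; your link argument (the link of $v_i$ is a simple cycle containing the $3$-cycle $(v_j,v_k,v_l)$, hence equals it, forcing $T$ to be a tetrahedron) is exactly the content the paper suppresses behind the word ``immediately.''
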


\begin{proof}
First we claim any two distinct vertices among $v_i,v_j,v_k$ are adjacent. Otherwise, without loss of generality, suppose $v_i$ is not adjacent to $v_j$. Then $D(v_i)\cap D(v_j)=\emptyset$, which contradicts~\eqref{E-3-5}. We further claim $v_i,v_j,v_k$ are the vertices of a triangle of $T$. Otherwise, the edges $[v_i,v_j],[v_j ,v_k], [v_k,v_i]$ form a simple closed curve separating the vertices of $T$. By condition $\mathrm{\mathbf{(cr)}}$, this also leads a contradiction to~\eqref{E-3-5}. Finally, remembering $T$ is not the boundary of a tetrahedron, we immediately derive~\eqref{E-3-6} from~\eqref{E-3-5}.
\end{proof}

Before the proof of Theorem~\ref{T-1-3}, we recall a simple fact which relates the irreducibility property to condition $\mathrm{\mathbf{(cr)}}$.
\begin{lemma}\label{L-3-3}
Let $D_i, D_j, D_k$ be three mutually intersecting disks on
$\hat{\mathbb C}$ with overlap angles $\Theta_{ij},\Theta_{jk},\Theta_{ki}\in[0,\pi)$. Suppose
$\mathbb{D}_i\cup \mathbb{D}_j\cup \mathbb{D}_k\subsetneq\hat{\mathbb C}$, where $\mathbb D_\mu$ denotes the interior of
$D_\mu$ for $\mu=i,j,k$. If $\Theta_{ij}+\Theta_{jk}+\Theta_{ki}<\pi$, then
\[
D_i\cap D_j\cap D_k=\emptyset.
\]
\end{lemma}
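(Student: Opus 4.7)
The plan is to argue by contradiction: assume $R := D_i\cap D_j\cap D_k\neq\emptyset$ and derive $\Theta_{ij}+\Theta_{jk}+\Theta_{ki}\geq\pi$. The key geometric inputs are that the overlap angle of two intersecting disks coincides with the interior angle of their intersection lune at a vertex, that overlap angles are preserved by M\"obius transformations, and that a Gauss--Bonnet calculation can be performed on the planar curvilinear triangle $R$ once one reduces to a Euclidean picture.

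First I would use the irreducibility hypothesis $\mathbb D_i\cup\mathbb D_j\cup\mathbb D_k\subsetneq\hat{\mathbb C}$ together with the standing assumption $R\neq\emptyset$ to conclude that the ``opposite'' region $R' := \hat{\mathbb C}\setminus(\mathbb D_i\cup\mathbb D_j\cup\mathbb D_k)$ is also non-empty. The simultaneous non-emptiness of $R$ and $R'$ places the three circles in a generic arrangement in which each pairwise intersection $\partial D_\mu\cap\partial D_\nu$ consists of two distinct points, exactly one of which lies in the third closed disk (and is a vertex of $R$), while the other lies outside it (and is a vertex of $R'$). I would then pick $q\in(\partial D_i\cap\partial D_j)\setminus D_k$ and apply a M\"obius transformation sending $q$ to $\infty$. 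In the transformed picture $\partial D_i$ and $\partial D_j$ become straight lines, so $D_i$ and $D_j$ become half-planes; since $q\notin D_k$, the circle $\partial D_k$ remains a bounded Euclidean circle and $D_k$ a bounded Euclidean disk. Consequently $R$ becomes a compact convex region in $\mathbb C$ bounded by two line segments meeting at the remaining point $q'\in\partial D_i\cap\partial D_j$ with opening angle $\Theta_{ij}$, together with one arc of $\partial D_k$ whose two endpoints on the line segments are further vertices of $R$ with interior angles $\Theta_{ik}$ and $\Theta_{jk}$, respectively. Finally I would apply the Gauss--Bonnet theorem to $R$: the geodesic curvature vanishes along the two straight edges and equals $1/r_k>0$ along the circular arc, so
\[
\int_{\partial R}\kappa_g\,ds+\sum_v(\pi-\alpha_v)=2\pi
\]
rearranges to
\[
\Theta_{ij}+\Theta_{jk}+\Theta_{ki}=\pi+\int_{\partial R}\kappa_g\,ds>\pi,
\]
contradicting the hypothesis.

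The hard part will be the configuration analysis that underpins the M\"obius reduction: one must rule out degenerate arrangements, such as both points of $\partial D_i\cap\partial D_j$ lying in $D_k$, pairwise tangencies, or three circles concurrent at a single point, any of which would destroy the clean ``two half-planes plus one bounded disk'' picture. I expect the resolution to be that $R\neq\emptyset$ together with $R'\neq\emptyset$ forces the three-circle arrangement to be the generic eight-region configuration, which is precisely what the irreducibility hypothesis provides; the boundary case of triple concurrency (which would give $\sum\Theta=\pi$) is then excluded by the strict inequality $\sum\Theta<\pi$.
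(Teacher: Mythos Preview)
The paper does not supply a proof of this lemma; it is introduced with the phrase ``we recall a simple fact'' and stated without argument. Your Gauss--Bonnet approach is a standard and correct route to this kind of statement in the generic case: once a point outside $\mathbb D_i\cup\mathbb D_j\cup\mathbb D_k$ is sent to $\infty$, the triple intersection $R$ (if a curvilinear triangle) has interior angles $\Theta_{ij},\Theta_{jk},\Theta_{ki}$ and strictly positive total geodesic curvature along its boundary, so Gauss--Bonnet forces $\Theta_{ij}+\Theta_{jk}+\Theta_{ki}>\pi$.

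However, your proposed resolution of the configuration analysis is incorrect. It is \emph{not} true that $R\neq\emptyset$ and $R'\neq\emptyset$ together force the generic eight-region arrangement. Take three unit disks in $\mathbb C$ centred at $(-0.8,0)$, $(0.8,0)$ and $(0,0)$: one checks that any point of the lune $D_i\cap D_j$ satisfies $x^2+y^2\leq 0.36$, so $D_i\cap D_j\subset D_k$ and $R=D_i\cap D_j$ is a bigon rather than a triangle. Both points of $\partial D_i\cap\partial D_j$ lie in $D_k$, so your choice of $q$ does not exist, yet $R'$ is certainly non-empty. In this ``lune'' situation your Gauss--Bonnet computation yields only $2\Theta_{ij}>0$, which says nothing about $\sum\Theta$. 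The fix is to treat this case separately: when $D_i\cap D_j\subset D_k$, the angle relation of Lemma~\ref{L-4-4} (whose proof is elementary and independent of the present lemma) gives $\Theta_{ik}+\Theta_{jk}\geq\Theta_{ij}+\pi$, and hence $\sum\Theta\geq\pi$ directly. With this patch for the degenerate cases, your argument goes through.
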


We are led to the general Andreev's Theorem.
\begin{proof}[\textbf{Proof of Theorem~\ref{T-1-3}}]
Let $T$ be the triangulation dual to the boundary decomposition of $P$ and let $\Theta^\ast: E\to(0,\pi)$ be the function assigning each edge of $T$ with the angle given by $\Theta$ at the dual edge. Since $\Theta:\mathcal E\to(0,\pi)$ satisfies conditions $\mathrm{\mathbf{(a1)}}-\mathrm{\mathbf{(a4)}}$, it is easy to see $\Theta^\ast: E\to(0,\pi)$ satisfies conditions
$\mathrm{\mathbf{(c1)}}-\mathrm{\mathbf{(c4)}}$. By Theorem~\ref{T-1-1}, there exists an \textbf{irreducible} circle pattern $\mathcal P$ on $\hat{\mathbb C}$ realizing $(T,\Theta^\ast)$.

Suppose $v_\alpha,v_\beta,v_\gamma$ are three vertices such that $[v_\alpha,v_\beta],[v_\beta ,v_\gamma], [v_\gamma,v_\alpha]$ form a simple closed curve separating the vertices of $T$. Because $\mathcal P$ is \textbf{irreducible} and $T$ possesses more than four vertices, we have $D(v_\alpha)\cup D(v_\beta)\cup D(v_\gamma)\subsetneq \hat{\mathbb C}$. Recalling $\Theta^\ast: E\to(0,\pi)$ satisfies conditions
$\mathrm{\mathbf{(c3)}}$,  we then check condition $\mathrm{\mathbf{(cr)}}$ by Lemma~\ref{L-3-3}. The existence part of the theorem thus follows from Theorem~\ref{T-3-1}. Finally, the rigidity part is a straightforward consequence of the following theorem due to Rivin-Hodgson~\cite[Corollary 4.6]{Rivin-Hodgson}.
\end{proof}

\begin{theorem}[Rivin-Hodgson]\label{T-3-4}
Compact convex polyhedra in $\mathbb H^3$ with trivalent vertices are determined up to isometries by their combinatorics and dihedral angles.
\end{theorem}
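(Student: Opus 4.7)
The plan is to establish rigidity via the polar dual construction together with a Cauchy-type infinitesimal argument. To each compact convex polyhedron $Q \subset \mathbb H^3$ with trivalent vertices I would associate its polar dual $Q^{\ast}$, a convex polyhedron in de Sitter space $dS^3$. Under this duality, faces of $Q$ correspond to spacelike vertices of $Q^{\ast}$, edges of $Q$ correspond to edges of $Q^{\ast}$ whose intrinsic lengths equal $\pi$ minus the dihedral angle at the dual edge of $Q$, and trivalence of vertices of $Q$ translates into triangularity of the faces of $\partial Q^{\ast}$. Consequently, the combinatorial type of $Q$ together with its dihedral angle function determine exactly the induced spherical cone-metric on $\partial Q^{\ast}$, viewed as the metric space obtained by gluing spherical triangles along edges of prescribed lengths, with cone angles exceeding $2\pi$ at each cone point.

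The problem is thereby reduced to showing that the induced cone-metric on $\partial Q^{\ast}$ determines $Q^{\ast}$, and hence $Q$, uniquely up to ambient isometry. I would connect any two candidates $Q_1, Q_2$ sharing the combinatorial type and dihedral angle data by a smooth path inside the moduli space of compact convex hyperbolic polyhedra of that fixed combinatorial type; along this path the induced cone-metric on the dual remains constant, so global rigidity follows once infinitesimal rigidity is established, namely that every nontrivial infinitesimal deformation of $Q^{\ast}$ in $dS^3$ strictly alters the induced intrinsic metric.

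Infinitesimal rigidity is the main technical obstacle, and I would attack it with a sign-counting argument in the spirit of Cauchy's rigidity theorem, adapted to the de Sitter setting. Given an infinitesimal isometric deformation of $Q^{\ast}$, label each edge of $\partial Q^{\ast}$ by a sign according to whether the corresponding dihedral angle of $Q^{\ast}$ infinitesimally increases, decreases, or is preserved. A local analysis of spherical polygon deformations at each vertex of $Q^{\ast}$ (equivalently, at each face of $Q$) shows that convexity forces at least four sign changes around every vertex unless all signs vanish locally. Summing these local lower bounds and contrasting them with the global bound coming from Euler's formula applied to the $1$-skeleton of $\partial Q^{\ast}$ produces a contradiction unless the deformation is trivial, that is, arises from a Killing field of $dS^3$. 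Integrating infinitesimal rigidity along the connecting path then yields the desired global rigidity statement.
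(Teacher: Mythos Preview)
The paper does not supply its own proof of this theorem; it is quoted as a result of Rivin--Hodgson~\cite[Corollary 4.6]{Rivin-Hodgson}, and the subsequent remark simply records that their argument proceeds ``via analogous reasoning to Cauchy's Rigidity Theorem.'' Your proposal to pass to the polar dual in de Sitter space and then run a Cauchy-type sign-counting argument is indeed the Rivin--Hodgson strategy, so at the level of overall architecture you are aligned with the cited proof.

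There is, however, a genuine gap in your execution. You propose to deduce global rigidity from \emph{infinitesimal} rigidity by connecting two candidates $Q_1,Q_2$ with identical combinatorics and dihedral angles by a smooth path in the moduli space along which ``the induced cone-metric on the dual remains constant.'' But the induced metric on $\partial Q^\ast$ is determined by the dihedral angles of $Q$, so a path along which this metric is constant is precisely a path along which the dihedral angles are constant---and the existence of such a path joining $Q_1$ to $Q_2$ is essentially the global rigidity statement you are trying to prove. If instead you take an arbitrary path in the moduli space, the dual metric varies and infinitesimal rigidity gives no constraint tying the endpoints together. Passing from infinitesimal to global rigidity genuinely requires additional input (properness of the Gauss-image map, connectivity/simple-connectivity of the target, etc.), none of which you supply.

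The way Rivin--Hodgson actually close this is cleaner and avoids the issue entirely: they run the Cauchy argument \emph{globally} rather than infinitesimally. Given two polar duals $Q_1^\ast,Q_2^\ast$ with the same induced metric, one marks each edge by the sign of the difference of the dihedral angles of $Q_1^\ast$ and $Q_2^\ast$, applies the Cauchy arm lemma at each vertex to obtain at least four sign changes (or none), and derives a contradiction with Euler's formula unless all signs vanish. This directly yields that $Q_1^\ast$ and $Q_2^\ast$ have equal dihedral angles as well as equal edge lengths, hence are congruent---no path or integration step is needed.
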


\begin{remark}
Rivin-Hodgson~\cite{Rivin-Hodgson} proved the above theorem via analogous reasoning to Cauchy's Rigidity Theorem (see~\cite[Chap.\,12]{Aigner-Ziegler}). Remind similar results can be seen in the works of Andreev~\cite[Theorem 4]{Andreev} and Roeder-Hubbard-Dunbar~\cite[Proposition 4.1]{Roeder-Hubbard-Dunbar}. On the other hand, the rigidity of compact convex hyperbolic polyhedra of more general combinatorial types with respect to their dihedral angles is an open problem dated back to Stoker~\cite{Stoker} (independently posed by Schlenker~\cite[Question 1]{Schlenker1}), which motivated the Infinitesimal Rigidity Theorem of Mazzeo-Montcouquiol~\cite{Mazzeo-Montcouquiol}. See also the fruitful research of Hodgson-Kerckhoff~\cite{Hodgson-Kerckhoff}, Weiss~\cite{Weiss} and Luo-Yang~\cite{Luo-Yang}.
\end{remark}

\section{Prior estimates for circle patterns}\label{S-4a}
As mentioned before, we regard configurations spaces as discrete analogs of Sobolev spaces. To prove Theorem~\ref{T-1-5}, it is necessary to establish some prior estimates relating different configurations spaces. For quickly detecting the ideas, the readers might focus on the core information of these estimates at first and revisit the proofs in details after getting the big picture of the paper.

We begin with several lemmas which serve as local estimates. Mention that some relevant results have already appeared in the works of Thurston~\cite{Thurston},
Marden-Rodin~\cite{Marden-Rodin}, Bowers-Stephenson~\cite{Bowers-Stephenson}, Chow-Luo~\cite{Chow-Luo} and others.

\begin{lemma}\label{L-4-1}
Suppose $\Theta_{ij},\Theta_{jk},\Theta_{ki}\in(0,\pi)$ are three angles satisfying
\[
\Theta_{ij}+\Theta_{jk}+\Theta_{ki}>\pi,\;\;
\Theta_{ij}+\Theta_{jk}<\Theta_{ki}+\pi,\;\;
\Theta_{jk}+\Theta_{ki}<\Theta_{ij}+\pi,\;\;
\Theta_{ki}+\Theta_{ij}<\Theta_{jk}+\pi.
\]
For any three reals $r_i,r_j,r_k\in(0,\pi)$,  there exists a configuration of three intersecting disks $D_i,D_j,D_k$ on $(\hat{\mathbb{C}},\mathrm{ds})$, unique up to isometries, having radii $r_i, r_j, r_k$  and meeting in overlap angles $\Theta_{ij},\Theta_{jk},\Theta_{ki}$. Moreover, the centers of these disks determine a spherical triangle.
\end{lemma}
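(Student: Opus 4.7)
The strategy is to reduce the lemma to producing a non-degenerate spherical triangle whose vertices will be the centers of the three disks. The inversive distance formula forces the pairwise spherical distances to be
\[
d_{\alpha\beta}:=\arccos\left(\cos r_\alpha\cos r_\beta-\sin r_\alpha\sin r_\beta\cos\Theta_{\alpha\beta}\right).
\]
Since $r_\alpha,\Theta_{\alpha\beta}\in(0,\pi)$, the argument of $\arccos$ lies strictly between $\cos(r_\alpha+r_\beta)$ and $\cos(r_\alpha-r_\beta)$, so each $d_{\alpha\beta}\in(0,\pi)$. It then remains to show that three points realizing these distances exist on $(\hat{\mathbb{C}},\mathrm{ds})$ and span a non-degenerate triangle, equivalently that the Gram matrix $G=(\cos d_{\alpha\beta})$ is positive definite. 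Once this is established, a Cholesky factorization produces three linearly independent unit vectors $z_i,z_j,z_k\in\mathbb{R}^3$; closed spherical caps of the prescribed radii centered at them meet pairwise at the prescribed overlap angles, because for any intersection point $p\in\partial D_\alpha\cap\partial D_\beta$ the spherical law of cosines applied to the triangle with vertices $z_\alpha,z_\beta,p$ yields angle $\pi-\Theta_{\alpha\beta}$ at $p$, which is the supplementary of the angle between the two circles. Uniqueness up to isometry is the standard fact that a spherical triangle is determined by its side lengths.

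To show $G>0$, I would factor the problem through auxiliary unit vectors $x_i,x_j,x_k\in\mathbb{R}^3$ with $x_\alpha\cdot x_\beta=-\cos\Theta_{\alpha\beta}$, whose existence is equivalent to positive definiteness of the matrix
\[
X=\begin{pmatrix}1 & -\cos\Theta_{ij} & -\cos\Theta_{ki}\\ -\cos\Theta_{ij} & 1 & -\cos\Theta_{jk}\\ -\cos\Theta_{ki} & -\cos\Theta_{jk} & 1\end{pmatrix}.
\]
This is precisely the matrix whose positive definiteness is verified in the proof of Theorem~\ref{T-3-1}: the $1\times 1$ and $2\times 2$ leading minors are obviously positive, while
\[
\operatorname{det} X=-4\cos\tfrac{\Theta_{ij}+\Theta_{jk}+\Theta_{ki}}{2}\cos\tfrac{\Theta_{ij}+\Theta_{jk}-\Theta_{ki}}{2}\cos\tfrac{\Theta_{jk}+\Theta_{ki}-\Theta_{ij}}{2}\cos\tfrac{\Theta_{ki}+\Theta_{ij}-\Theta_{jk}}{2}.
\]
Under the hypotheses the first cosine is negative because $\pi<\Theta_{ij}+\Theta_{jk}+\Theta_{ki}<3\pi$, while each of the remaining three is positive because the corresponding argument lies strictly in $(-\pi,\pi)$ by the triangle-type inequalities together with $\Theta_{\alpha\beta}\in(0,\pi)$. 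Hence $\operatorname{det} X>0$, and Sylvester's criterion gives $X>0$.

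Given such auxiliary $x_\alpha$, set $v_\alpha:=(\cos r_\alpha,\,\sin r_\alpha\, x_\alpha)\in\mathbb{R}^4$. A direct computation shows these are unit vectors with $v_\alpha\cdot v_\beta=\cos d_{\alpha\beta}$, and since $\sin r_\alpha>0$ and the $x_\alpha$ are linearly independent, the $v_\alpha$ are also linearly independent, so $G$ is positive definite and the argument is complete. The main technical point is the sign analysis of the four cosine factors in $\operatorname{det} X$, which is where each of the four inequalities on $\Theta_{ij},\Theta_{jk},\Theta_{ki}$ is used; everything else is routine Gram-matrix manipulation or a direct application of the spherical law of cosines.
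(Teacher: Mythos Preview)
Your proof is correct and takes a genuinely different route from the paper's. Both arguments reduce to showing that the Gram matrix $G=(\cos d_{\alpha\beta})$ is positive definite (the paper phrases this as the product-of-four-sines inequality, which is exactly $\det G>0$). The paper establishes this by a direct but lengthy algebraic computation: it expands everything in $a_\mu=\cos r_\mu$, $x_\mu=\sin r_\mu$, invokes the second spherical law of cosines to bring in the side lengths $\phi_i,\phi_j,\phi_k$ of the dual spherical triangle, and then recognizes the resulting expression as a positive-definite quadratic form in auxiliary variables $s_i,s_j,s_k$ plus the positive constant $\zeta_{ijk}x_i^2x_j^2x_k^2$.

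Your argument bypasses all of that algebra with a clean lifting trick: once $X>0$ gives unit vectors $x_\alpha\in\mathbb{R}^3$ realizing the $\Theta$-Gram matrix, the vectors $v_\alpha=(\cos r_\alpha,\sin r_\alpha\,x_\alpha)\in\mathbb{R}^4$ are automatically linearly independent unit vectors with Gram matrix $G$, so $G>0$ is immediate. This is shorter and more conceptual, and it makes transparent why the same matrix $X$ (already handled in the proof of Theorem~\ref{T-3-1}) governs both the polyhedron vertex condition and the three-circle configuration. The paper's computation, on the other hand, is fully self-contained and yields the explicit factorization of $\det G$ in terms of the $l_{\alpha\beta}$, which one might want for other purposes. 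One tiny imprecision: when you say the remaining cosine factors are positive because ``the corresponding argument lies strictly in $(-\pi,\pi)$'', you mean the full combination $\Theta_{ij}+\Theta_{jk}-\Theta_{ki}$ etc.; it is the \emph{half} of this, lying in $(-\pi/2,\pi/2)$, whose cosine is then positive.
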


\begin{figure}[htbp]\centering
\includegraphics[width=0.52\textwidth]{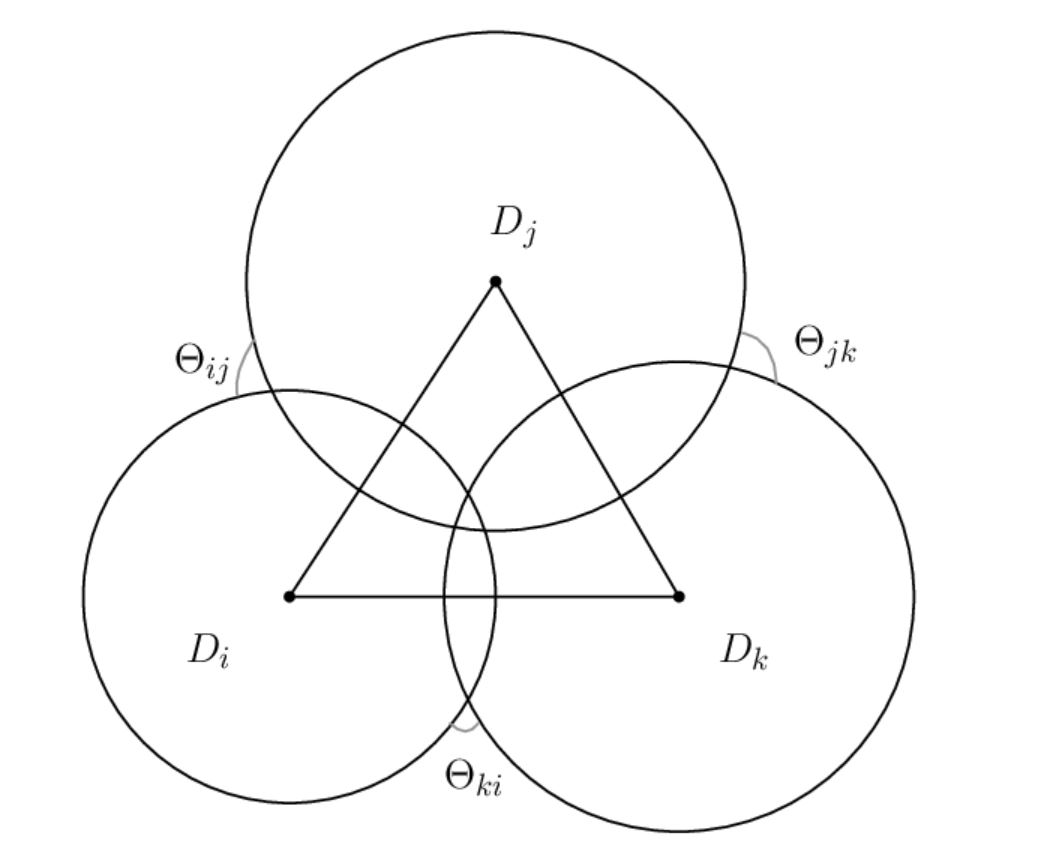}
\caption{Three-circle configuration}
\end{figure}

The proof is analogous to the hyperbolic and Euclidean versions established by Zhou~\cite{Zhou1} and Jiang-Luo-Zhou~\cite{Jiang-Luo-Zhou},  but the details are complicated. We postpone it to the appendix. Meanwhile, we observe the following simple fact.

\begin{lemma}\label{L-4-2}
Let $D_i,D_j,D_k$ be as in Lemma~\ref{L-4-1}. Then $\partial D_i\cap \partial D_j\cap \partial D_k=\emptyset$.
\end{lemma}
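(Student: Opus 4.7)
The plan is to argue by contradiction: suppose that a common point $p\in \partial D_i\cap\partial D_j\cap\partial D_k$ exists, and derive an angle identity at $p$ that directly clashes with the strict inequalities on $\Theta_{ij},\Theta_{jk},\Theta_{ki}$ posited in Lemma~\ref{L-4-1}.

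First I would localize to the tangent plane $T_p\mathbb S^2$ and record the three unit vectors $u_i,u_j,u_k$ pointing along the spherical geodesics from $p$ toward the centers $z_i,z_j,z_k$. Because every $\Theta_{\mu\nu}\in(0,\pi)$, no two of the circles are tangent at $p$, so no two of the $u_\mu$ coincide or are antipodal and the three rays are in genuine ``three-ray'' position. The tangent of $\partial D_\mu$ at $p$ is orthogonal to $u_\mu$, and the spherical law of cosines for the triangle $z_i\,p\,z_j$, combined with the formula for $I(e,\mathbf z,\mathbf r)$ recalled in Section~\ref{S-2a}, converts the overlap-angle relation into the clean identity
\[
\angle(u_i,u_j)=\pi-\Theta_{ij}
\]
(and cyclically), where $\angle(\cdot,\cdot)$ denotes the unsigned angle in $[0,\pi]$. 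This is the only computational step and is routine.

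The main step is an elementary case analysis in $T_p\mathbb S^2$. Listing the three rays $u_i,u_j,u_k$ cyclically, the three consecutive \emph{oriented} angles sum to $2\pi$, and at most one of them can exceed $\pi$. Since the unsigned angle between $u_\mu$ and $u_\nu$ equals $\pi-\Theta_{\mu\nu}$, each oriented angle is forced to be either $\pi-\Theta_{\mu\nu}$ or $\pi+\Theta_{\mu\nu}$, with at most one of the three being of the latter form. Summing produces exactly two possible scenarios: either
\[
\Theta_{ij}+\Theta_{jk}+\Theta_{ki}=\pi,
\]
or, after relabeling, $\Theta_{jk}+\Theta_{ki}=\Theta_{ij}+\pi$. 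The first contradicts the strict inequality $\Theta_{ij}+\Theta_{jk}+\Theta_{ki}>\pi$ from Lemma~\ref{L-4-1}, and the second contradicts the strict inequality $\Theta_{jk}+\Theta_{ki}<\Theta_{ij}+\pi$ from the same lemma; the remaining relabelings are handled symmetrically. Either way we reach a contradiction, so no triple intersection point can exist.

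I do not expect a serious obstacle here: the hypotheses of Lemma~\ref{L-4-1} were tailored to exclude precisely the two geometric configurations in which three circles can share a common point with prescribed pairwise overlap angles. The one place that rewards care is the bookkeeping between the oriented and the unsigned angles in $T_p\mathbb S^2$ that produces the ``$\pi-\Theta$ versus $\pi+\Theta$'' dichotomy, since this is what turns the analytic hypothesis of Lemma~\ref{L-4-1} into the desired contradiction.
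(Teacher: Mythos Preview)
Your argument is correct. The key identity $\angle(u_\mu,u_\nu)=\pi-\Theta_{\mu\nu}$ follows exactly as you indicate (spherical law of cosines in the triangle $z_\mu\,p\,z_\nu$ with sides $r_\mu,r_\nu,l_{\mu\nu}$ matches the formula $\cos l_{\mu\nu}=\cos r_\mu\cos r_\nu-\cos\Theta_{\mu\nu}\sin r_\mu\sin r_\nu$ from the proof of Lemma~\ref{L-4-1}), and the oriented-angle bookkeeping yielding either $\sum\Theta=\pi$ or $\Theta_{jk}+\Theta_{ki}=\Theta_{ij}+\pi$ is clean and complete.

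As for comparison: the paper does not actually supply a proof of Lemma~\ref{L-4-2}; it is stated as a ``simple fact'' immediately after Lemma~\ref{L-4-1} and left to the reader. Your tangent-plane angle chase is therefore not competing with any written argument, and it is a perfectly good way to fill the gap. One could alternatively observe that the conditions on the $\Theta$'s are exactly those making $\zeta_{ijk}>0$ in the appendix computation, which is the obstruction to the three boundary circles becoming concurrent after a M\"obius transformation sending $p$ to $\infty$; your direct approach avoids that detour and is arguably more transparent.
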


\begin{lemma}\label{L-4-3}
Let $D_i, D_j, D_k$ be three intersecting disks on $(\hat{\mathbb C},\mathrm{ds})$ with radii $r_i,r_j,r_k\in(0,\pi/2]$ and overlap angles $\Theta_{ij},\Theta_{jk},\Theta_{ki}\in[0,\pi)$. If $\Theta_{ij}+\Theta_{jk}+\Theta_{ki}<\pi$, then
\[
D_i\cap D_j\cap D_k=\emptyset.
\]
\end{lemma}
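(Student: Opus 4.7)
The plan is to argue by contradiction using the spherical Gauss--Bonnet theorem applied to the triple intersection region. Suppose $R:=D_i\cap D_j\cap D_k\neq\emptyset$. Since each $D_v$ has $r_v\le\pi/2$, each $D_v$ is spherically convex, and so $R$ is a nonempty closed convex subset of the sphere. Its boundary is built from circular arcs on $\partial D_i,\partial D_j,\partial D_k$, meeting at vertices lying on the pairwise intersections $\partial D_u\cap\partial D_v$.

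Treat first the generic case in which $\partial R$ contains exactly three arcs, one from each boundary circle, meeting at three vertices. At a vertex $v_{uv}\in\partial D_u\cap\partial D_v$ (which lies in the interior of the third disk), $R$ agrees locally with the lens $D_u\cap D_v$, and a direct application of the spherical law of cosines to the triangle $c_uc_vv_{uv}$ identifies the interior angle of $R$ at $v_{uv}$ as $\Theta_{uv}$. Meanwhile each bounding arc on $\partial D_v$, traversed with $R$ on its left, has constant geodesic curvature $\cot r_v$, which is $\ge 0$ exactly because $r_v\le\pi/2$. The spherical Gauss--Bonnet formula
\[
\area(R)+\int_{\partial R}\kappa_g\,ds+\bigl(3\pi-(\Theta_{ij}+\Theta_{jk}+\Theta_{ki})\bigr)=2\pi
\]
then rearranges to $\Theta_{ij}+\Theta_{jk}+\Theta_{ki}\ge\pi$, contradicting the hypothesis.

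The hard part is the degenerate configuration in which $\partial R$ has only two arcs. The assumption $\Theta\in[0,\pi)$ excludes proper containment of one disk in another, so this degeneracy forces $D_i\cap D_j\subset D_k$, with $R$ equal to the lens $D_i\cap D_j$ and with both lens vertices in the interior of $D_k$. Applying Gauss--Bonnet to the lens alone gives only the trivial relation among $\Theta_{ij}$ and the two bounding arcs, which is not enough. I would dispose of this case by exploiting the containment $D_i\cap D_j\subset D_k$: it forces the arc of $\partial D_i$ lying in $D_j$ to be a sub-arc of the arc of $\partial D_i$ lying in $D_k$ (and symmetrically at $c_j$), which translates into angular inequalities at the centers $c_i$ and $c_j$. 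Combining these with the disjointness of the arcs of $\partial D_k$ inside $D_i$ and inside $D_j$, and with the angle-sum identity for the spherical triangle $c_ic_jc_k$, should again force $\sum\Theta\ge\pi$. This degenerate case analysis, together with carefully verifying the interior-angle identification at each vertex via the spherical law of cosines, is where I expect the most delicate bookkeeping.
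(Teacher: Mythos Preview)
Your Gauss--Bonnet argument is a genuinely different route from the paper's proof, and the generic three-arc case is correct as you wrote it. The paper instead uses the radius hypothesis for a completely different purpose: since $r_i,r_j\le\pi/2$, the complement $\hat{\mathbb C}\setminus(\mathbb D_i\cup\mathbb D_j)$ contains a pair of antipodal points, and since $r_k\le\pi/2$ the disk $\mathbb D_k$ misses at least one of them; hence $\mathbb D_i\cup\mathbb D_j\cup\mathbb D_k\subsetneq\hat{\mathbb C}$, and one finishes by invoking the M\"obius-invariant Lemma~\ref{L-3-3}. So the paper uses $r_v\le\pi/2$ to get \emph{non-covering}, whereas you use it to get \emph{convexity and nonnegative geodesic curvature}. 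Your approach is self-contained on the sphere and makes the role of the curvature sign transparent; the paper's approach is shorter and cleanly separates the metric input (radii) from the conformal input (angle sum), at the cost of deferring the real work to Lemma~\ref{L-3-3}.

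The one place where your write-up is genuinely incomplete is the two-arc degenerate case $D_i\cap D_j\subset D_k$, and here you are making it much harder than necessary. This is precisely the situation of Lemma~\ref{L-4-4} in the paper, proved there by a one-line M\"obius trick (send the two points of $\partial D_i\cap\partial D_j$ to $0$ and $\infty$, turning $\partial D_i,\partial D_j$ into lines): the conclusion is $\Theta_{jk}+\Theta_{ki}\ge\Theta_{ij}+\pi$, whence $\Theta_{ij}+\Theta_{jk}+\Theta_{ki}\ge 2\Theta_{ij}+\pi\ge\pi$, the same contradiction as in your generic case. The ``delicate bookkeeping'' you anticipate with arc-containments and center-angle inequalities is therefore unnecessary. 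Note also that this same lemma disposes of the further degenerations you did not list (e.g.\ $\Theta_{ij}=0$ with the tangency point inside $D_k$, or a triple boundary point), so once Lemma~\ref{L-4-4} is in hand your argument is complete.
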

\begin{proof}
Let $\mathbb D_i, \mathbb D_j, \mathbb D_k$ be the interiors of $D_i, D_j, D_k$, respectively. Since $r_i,r_j\in(0,\pi/2]$, it is easy to see the set $\hat{\mathbb C}\setminus(\mathbb D_i\cup\mathbb D_j)$ contains a pair of antipodal points. Because $r_k\leq\pi/2$, one of the antipodal points can not be covered by $\mathbb D_k$. That means $\mathbb D_i\cup\mathbb D_j\cup \mathbb D_k\subsetneq \hat{\mathbb C}$. Applying Lemma~\ref{L-3-3}, we  obtain $D_i\cap D_j\cap D_k=\emptyset$.
\end{proof}

\begin{lemma}\label{L-4-4}
Let $D_i, D_j, D_k$ be three mutually intersecting disks on $\hat{\mathbb C}$ with overlap angles $\Theta_{ij},\Theta_{jk},\Theta_{ki}\in[0,\pi)$. Suppose $D_i\cap D_j\subset D_k$. Then
\[
\Theta_{jk}+\Theta_{ki}  \geq \Theta_{ij} + \pi.
\]
In particular, when $D_i,D_j$ intersect at only one point, we have
\[
\Theta_{jk} +\Theta_{ki}\geq \pi,
\]
where the equality holds if and only if $D_i\cap D_j=\partial D_i\cap \partial D_j\subset\partial D_k$.
\end{lemma}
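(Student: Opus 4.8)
The plan is to work on the Riemann sphere $\hat{\mathbb C}$ and exploit the Möbius-invariance of overlap angles to place the three disks in a convenient normal form. Since $D_i\cap D_j\subset D_k$ and all three disks intersect pairwise, a natural first step is to apply a linear fractional map sending the two intersection points $\partial D_i\cap\partial D_j$ (if $D_i,D_j$ overlap in a lens) to $0$ and $\infty$; then $D_i$ and $D_j$ become two half-planes (or, after a rotation, two sectors with a common vertex at the origin) bounded by straight lines through the origin, and $D_i\cap D_j$ is a sector of some opening angle. In this model the overlap angle $\Theta_{ij}$ is exactly the interior opening angle of the sector $D_i\cap D_j$ at the origin (measuring the angle in which the two boundary arcs cross). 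The hypothesis forces $D_k$ to contain this entire sector.

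First I would set up the angle bookkeeping carefully. Let the two rays bounding the sector $D_i\cap D_j$ make the sector of opening $\Theta_{ij}$ at the origin. The boundary circle $\partial D_k$ must pass through the sector region so that the sector lies inside $D_k$; in particular, as we travel along $\partial D_i$ starting from the origin into the region $D_i\cap D_j$, this arc must exit $D_k$ through $\partial D_k$, and the exit crossing occurs at overlap angle $\Theta_{ki}$ between $\partial D_i$ and $\partial D_k$ — and crucially the geometry of "the sector is swallowed by $D_k$" forces the crossing to happen on a definite side, contributing $\pi-\Theta_{ki}$ (not $\Theta_{ki}$) to the angular budget swept out at the relevant vertex. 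The same applies to $\partial D_j$ and $\partial D_k$, contributing $\pi-\Theta_{jk}$. The key geometric identity I want is: the opening angle $\Theta_{ij}$ of the sector, plus the turning contributed where $\partial D_i$ meets $\partial D_k$ and where $\partial D_j$ meets $\partial D_k$, must close up, which after sign bookkeeping reads $\Theta_{ij}+(\pi-\Theta_{jk})+(\pi-\Theta_{ki})\ge \pi$, i.e.\ $\Theta_{jk}+\Theta_{ki}\ge\Theta_{ij}+\pi$. The cleanest way to make this rigorous is probably the Gauss--Bonnet / angle-sum argument applied to the curvilinear triangle cut out near the origin by the arcs of $\partial D_i$, $\partial D_j$, $\partial D_k$ (a spherical curvilinear triangle has angle sum $>\pi$), together with a careful argument that $D_i\cap D_j\subset D_k$ pins down which of the two supplementary angles at each crossing is the relevant interior angle.

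For the degenerate case where $D_i\cap D_j=\partial D_i\cap\partial D_j$ is a single point, one has $\Theta_{ij}=\pi$ in the limiting sense — the two boundary circles are internally/externally tangent — so the inequality $\Theta_{jk}+\Theta_{ki}\ge\Theta_{ij}+\pi$ is not directly available, but the right substitute is $\Theta_{ij}\to\pi$ is not what happens; rather tangency means the "sector" has opening $0$ on one side, and the correct reading of the closing-up identity becomes $\Theta_{jk}+\Theta_{ki}\ge\pi$. I would treat this by a direct limiting argument: approximate the tangency by lens configurations $D_i^\varepsilon\cap D_j^\varepsilon\subset D_k$ with $\Theta_{ij}^\varepsilon\to 0$ — wait, the tangent point being \emph{in} $D_k$ together with $D_i,D_j$ mutually intersecting forces the local picture to be two internally tangent circles both containing a small neighborhood inside $D_k$ — and apply the main inequality to get $\Theta_{jk}+\Theta_{ki}\ge\Theta_{ij}^\varepsilon+\pi\to\pi$. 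For the equality case, I would argue that $\Theta_{jk}+\Theta_{ki}=\pi$ forces all the Gauss--Bonnet defects to vanish, which (on the sphere, where a genuine curvilinear triangle has strictly positive defect) can only happen if the triangle is degenerate, i.e.\ the tangent point of $D_i$ and $D_j$ lies on $\partial D_k$ and $\partial D_i\cap\partial D_j\subset\partial D_k$; conversely that configuration clearly gives equality by a direct check.

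The main obstacle I anticipate is the sign bookkeeping — correctly identifying, at each of the two crossings with $\partial D_k$, which supplementary angle ($\Theta$ or $\pi-\Theta$) enters the closing-up relation, and proving that the containment hypothesis $D_i\cap D_j\subset D_k$ is exactly what forces the choice. A clean way to sidestep ambiguity is to pass to the half-plane model (send the lens tips to $0,\infty$ so $\partial D_i,\partial D_j$ are lines) and then apply a further Möbius map fixing $0$ and $\infty$ to normalize $\partial D_k$ to a round circle in standard position; at that point everything reduces to an explicit planar computation with three circles/lines, and the inequality becomes a transparent elementary estimate, with the equality case read off directly.
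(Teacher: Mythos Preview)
Your overall strategy---apply a M\"obius map sending the two points of $\partial D_i\cap\partial D_j$ to $0$ and $\infty$, so that $\partial D_i,\partial D_j$ become straight lines and the lens $D_i\cap D_j$ becomes a sector of opening $\Theta_{ij}$, then read the inequality off the resulting planar picture---is exactly the paper's approach. The paper's proof is literally one line (``straightforward after transforming $\partial D_i,\partial D_j$ into intersecting lines; see Figure~\ref{F-2}''), so there is nothing more to compare at the level of ideas.

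That said, your write-up has a concrete error precisely at the step you yourself flagged as the danger zone. From
\[
\Theta_{ij}+(\pi-\Theta_{jk})+(\pi-\Theta_{ki})\ge \pi
\]
one obtains $\Theta_{jk}+\Theta_{ki}\le\Theta_{ij}+\pi$, which is the \emph{opposite} of the lemma's conclusion; your ``i.e.'' is false. So either your proposed curvilinear triangle has the wrong interior angles, or the angle-sum inequality should go the other way (after the normalization the third side is an arc of a \emph{circle}, not a geodesic, and its geodesic curvature contributes with a sign you have not pinned down). You need to identify the region and its vertex angles unambiguously before any Gauss--Bonnet or closing-up argument can be trusted---exactly the ``sign bookkeeping'' obstacle you anticipated but did not resolve. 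Your discussion of the tangent case is also muddled: when $D_i,D_j$ meet in one point the overlap angle is $\Theta_{ij}=0$ (external tangency), not $\pi$, so the particular case is simply the main inequality with $\Theta_{ij}=0$; after the M\"obius map the tangent point goes to $\infty$, the boundaries $\partial D_i,\partial D_j$ become \emph{parallel} lines, and the equality analysis becomes a direct check (equality exactly when $\partial D_k$ also passes through $\infty$, i.e.\ is a line) rather than a limiting argument.
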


\begin{proof}
The lemma is straightforward after transforming  $\partial D_i, \partial D_j$ into intersecting lines through a suitable linear fractional map. See Figure~\ref{F-2}.
\end{proof}

\begin{figure}[htbp]\centering
\includegraphics[width=0.95\textwidth]{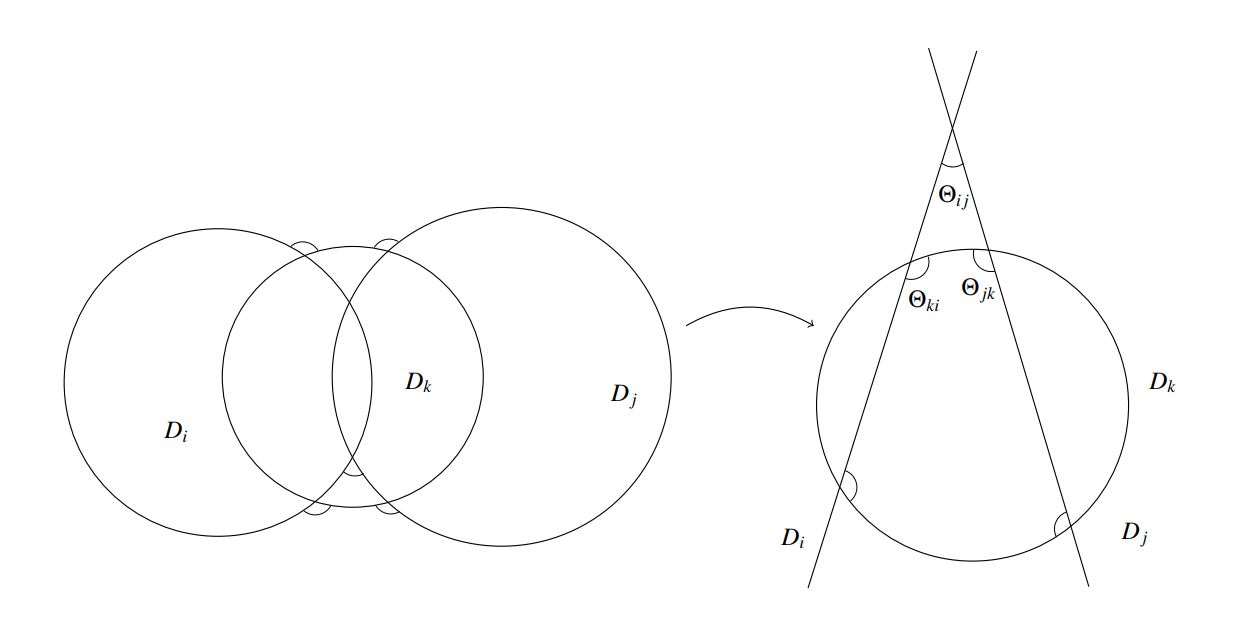}
\caption{Angle relation}\label{F-2}
\end{figure}

Next we introduce some global estimates. Unless otherwise stated, in what follows we always assume $T$ is a triangulation of the sphere with more than four vertices. For each disk $D(v)$ on $(\hat{\mathbb C},\mathrm {ds})$,  we use $r(D(v))$ to denote the radius of $D(v)$.

\begin{lemma}\label{L-4-5}
Let $\mathcal P=\{D(v)\}_{v\in V}$ be an \textbf{irreducible} circle pattern on
$(\hat{\mathbb C},\mathrm {ds})$ with contact graph isomorphic to the $1$-skeleton of $T$ and overlap angle function $\Theta:E\to(0,\pi)$ satisfying condition $\mathrm{\mathbf{(c3)}}$. Suppose $v_a,v_b,v_c$ are the vertices of a triangle of $T$. If
\[
r(D(v_a))=r(D(v_b))=r(D(v_c))=\pi/2,
\]
then
\[
r(D(v))<\pi/2,\quad \forall\,v\in V\setminus\{v_a,v_b,v_c\}.
\]
\end{lemma}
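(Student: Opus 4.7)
The plan is to argue by contradiction: suppose there exists $v_0\in V\setminus\{v_a,v_b,v_c\}$ with $r(D(v_0))\ge\pi/2$. On the unit sphere any two closed disks whose radii sum to at least $\pi$ must intersect, since spherical distances do not exceed $\pi$; so $D(v_0)$ meets each of the hemispheres $D(v_a),D(v_b),D(v_c)$. The contact-graph hypothesis then forces $v_0$ to be adjacent in $T$ to each of $v_a,v_b,v_c$, and hence $\{v_a,v_b,v_c,v_0\}$ spans a $K_4$ subgraph. Because $v_av_bv_c$ is already a face and $|V|>4$, the three triangles $v_av_bv_0$, $v_bv_cv_0$, $v_cv_av_0$ cannot all be faces of $T$ (else $T$ would be the boundary of a tetrahedron, contradicting $|V|>4$); hence at least one, say $v_av_bv_0$ after relabeling, is a 3-cycle separating the vertices of $T$, and condition $\mathrm{\mathbf{(c3)}}$ yields
\[
\Theta(v_a,v_b)+\Theta(v_a,v_0)+\Theta(v_b,v_0)<\pi.
\]

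For the sub-case $r(D(v_0))=\pi/2$ the argument closes immediately: using $\cos\Theta=-\cos d$ for two hemispheres, the inequality rearranges to
\[
d(z_a,z_b)+d(z_a,z_{v_0})+d(z_b,z_{v_0})>2\pi,
\]
which is impossible since the perimeter of any spherical triangle on $\mathbb S^2$ is at most $2\pi$. For the sub-case $r(D(v_0))>\pi/2$, I would instead use the overlap-angle identity $\cos\Theta(v_i,v_0)=-\cos d(z_i,z_{v_0})/\sin r(D(v_0))$ together with the spherical law of cosines on the triangle $z_az_bz_{v_0}$, and combine $\mathrm{\mathbf{(c3)}}$ with the further constraints inherited from the neighborhood of $v_0$: the triangle inequalities of $\mathrm{\mathbf{(c2)}}$ on the faces incident to $v_0$, any additional separating 3-circuit among the remaining triangles of the $K_4$ (which furnishes another $\mathrm{\mathbf{(c3)}}$ inequality), and $\mathrm{\mathbf{(c1)}}$ on simple arcs through $v_0$, to drive the same type of geometric impossibility.

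The main obstacle is precisely this $r(D(v_0))>\pi/2$ case. Here the identity $\Theta=\pi-d$ is unavailable, and the function $d\mapsto\arccos(-\cos d/\sin r)$ compares differently to $\pi-d$ depending on whether $d$ is less than or greater than $\pi/2$, so $\mathrm{\mathbf{(c3)}}$ alone does not by itself violate the spherical perimeter bound. The technical crux is thus a careful trigonometric comparison that exploits the compatibility of the whole pattern around $v_0$---in particular the cyclic arrangement of its neighbors and the constraints they place on $d(z_a,z_{v_0})$, $d(z_b,z_{v_0})$, $d(z_c,z_{v_0})$ through the other face and separating-circuit inequalities---rather than just one isolated separating 3-circuit.
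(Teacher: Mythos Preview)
Your opening steps match the paper's: contradiction, the observation that $r(D(v_0))\ge\pi/2$ forces $D(v_0)$ to meet each hemisphere $D(v_a),D(v_b),D(v_c)$, hence $v_0$ is adjacent to all three, and since $|V|>4$ at least one of the three $3$-cycles through $v_0$ is separating, so $\mathrm{\mathbf{(c3)}}$ applies to it.

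From that point, however, there is a genuine gap. You split into the sub-cases $r(D(v_0))=\pi/2$ and $r(D(v_0))>\pi/2$, close the first by a perimeter computation, and for the second propose to invoke $\mathrm{\mathbf{(c1)}}$ and $\mathrm{\mathbf{(c2)}}$ together with further separating circuits. But the lemma only assumes $\mathrm{\mathbf{(c3)}}$; conditions $\mathrm{\mathbf{(c1)}}$, $\mathrm{\mathbf{(c2)}}$, $\mathrm{\mathbf{(c4)}}$ are \emph{not} available here, so that route cannot work as stated. More tellingly, you never use the hypothesis that $\mathcal P$ is \textbf{irreducible}, which is the key ingredient you are missing.

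The paper's argument avoids your case split entirely. From irreducibility (and $|V|>4$) one has $\mathbb D(v_a)\cup\mathbb D(v_b)\cup\mathbb D(v_0)\subsetneq\hat{\mathbb C}$, and then the elementary three-disk fact (Lemma~\ref{L-3-3}) combined with the $\mathrm{\mathbf{(c3)}}$ bound on the separating $3$-cycle gives
\[
D(v_a)\cap D(v_b)\cap D(v_0)=\emptyset,\qquad\text{hence}\qquad D(v_a)\cap D(v_b)\subset \hat{\mathbb C}\setminus D(v_0).
\]
Now the two hemispheres $D(v_a),D(v_b)$ have great-circle boundaries, so $D(v_a)\cap D(v_b)$ contains a pair of antipodal points; but $\hat{\mathbb C}\setminus D(v_0)$ is an open disk of radius $\pi-r(D(v_0))\le\pi/2$ and therefore cannot contain antipodal points. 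This handles all $r(D(v_0))\ge\pi/2$ at once, with no trigonometric estimates and no appeal to $\mathrm{\mathbf{(c1)}}$ or $\mathrm{\mathbf{(c2)}}$.
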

\begin{proof}
Assume on the contrary that there exists $v_0\in V\setminus\{v_a,v_b,v_c\}$ such that $
r(D(v_0))\geq \pi/2$. Then $D(v_0)\cap D(v_\mu)\neq\emptyset$ for $\mu=a,b,c$. That means $v_0$ is a common adjacent vertex of $v_a,v_b,v_c$. Since $T$ has more than four vertices, there is no loss of generality in assuming the edges $[v_a,v_b]$, $[v_b,v_0]$, $[v_0,v_b]$ form a simple closed curve separating the vertices of $T$. Because $\mathcal P$ is \textbf{irreducible} and $\Theta$ satisfies condition $\mathrm{\mathbf{(c3)}}$, Lemma~\ref{L-3-3} indicates
\[
D(v_a)\cap D(v_b)\cap D(v_0)=\emptyset.
\]
It follows that
\[
D(v_a)\cap D(v_b)\subset \hat{\mathbb C}\setminus D(v_0).
\]
On the other hand, under condition $r(D(v_a))=r(D(v_b))=\pi/2$, the intersection $D(v_a)\cap D(v_b)$ contains a pair of antipodal points. Therefore, the open disk $\hat{\mathbb C}\setminus D(v_0)$ must have radius larger than $\pi/2$, which contradicts the assumption that $r(D(v_0))\geq \pi/2$.
\end{proof}

The following is an analog of the Ring Lemma of Rodin-Sullivan~\cite{Rodin-Sullivan}. Remind  the Euclidean version in the case that $0<\Theta\leq\pi/2$ was obtained by He~\cite[Lemma 7.1]{He}.
\begin{lemma}\label{L-4-6}
Let $\mathcal P=\{D(v)\}_{v\in V}$ be an \textbf{irreducible} circle pattern on
$(\hat{\mathbb C},\mathrm {ds})$ with contact graph isomorphic to the $1$-skeleton of $T$ and overlap angle function $\Theta: E\to(0,\pi)$ staying in a compact subset $\Omega\subset W$. Suppose $r(D(v))\leq \pi/2$ for all $v\in V$. Then there exists a positive constant $C$ depending only on $T$ and $\Omega$ such that
\[
r(D(v))\leq C r(D(u))
\]
for each pair of adjacent vertices $v,u\in V$.
\end{lemma}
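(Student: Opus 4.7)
I would prove Lemma~4.6 by contradiction and a blow-up (normal families) argument, replacing the tangent Euclidean picture of the classical Rodin--Sullivan Ring Lemma by a ``small shrinking region on the sphere'' picture. Suppose the conclusion fails. Then there are patterns $\mathcal{P}_n$ satisfying the hypotheses and adjacent pairs $v_n,u_n\in V$ with $r_n(D(v_n))/r_n(D(u_n))\to\infty$. Using finiteness of $E$, compactness of $\Omega$, and compactness of $[0,\pi/2]^{|V|}$, pass to a subsequence along which the offending edge $[v_n,u_n]=[v,u]$ is fixed, $\Theta_n\to\Theta_\infty\in\Omega$, and $r_n^w\to r^w\in[0,\pi/2]$ for every $w$; since $r_n^v\leq\pi/2$ the hypothesis forces $r^u=0$. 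Replace $u$ by a vertex $u^{*}$ attaining $\min_w r_n^w$ (passing to a further subsequence so $u^{*}$ is fixed) and a further subsequence along which $\lambda_w:=\lim_n r_n^w/r_n^{u^{*}}\in[1,\infty]$ exists for every $w$. Set $V_0:=\{w:\lambda_w<\infty\}$, a proper nonempty subset containing $u^{*}$ (proper because the initial bad edge forces some $\lambda_w=\infty$); let $k$ be the length of the link cycle of $V_0$ in $T$.

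The heart of the argument is a local Euclidean blow-up. In a spherical chart centered at the center of $D_n(u^{*})$, rescale by $1/r_n^{u^{*}}$. A Taylor expansion of the spherical inversive-distance relation shows that in the limit $D(u^{*})$ becomes the unit Euclidean disk at the origin, each $D(w)$ with $w\in V_0$ becomes a Euclidean disk of radius $\lambda_w\geq 1$, and each $D(w)$ with $w\notin V_0$ becomes a half-plane whose boundary lies at signed distance $\cos\Theta_\infty(u^{*},w)$ from the origin in a definite direction. Conformal invariance guarantees that around each finite vertex of the limiting triangulation of centers the angles still sum to $2\pi$. A direct law-of-cosines computation in the blow-up shows that, for a face $f$ of $T$ with at least one vertex in $V_0$, the sum of the angles at its $V_0$-vertices equals $\pi$ if $f$ has two or three vertices in $V_0$, and equals $\pi-\Theta_\infty(e_f)$ if $f$ has exactly one (where $e_f$ is the edge between the two non-$V_0$ vertices of $f$). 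Combining this with $\sum_{w\in V_0}2\pi=2\pi|V_0|$ and the Euler identity $|F_{\geq 1}|=2|V_0|+k-2$ for the topological disk $\mathrm{St}(V_0)$, together with the observation that the assignment $f\mapsto e_f$ is a bijection from the class $F_1$ of faces onto the link edges (so $|F_1|=k$), one obtains the clean identity
\[
\sum_{j=1}^{k}\Theta_\infty(e_j)\;=\;(k-2)\pi,
\]
where $e_1,\ldots,e_k$ are the link edges in cyclic order.

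The proof concludes by contradicting this identity using $\Theta_\infty\in\Omega\subset W$. If the link cycle separates the vertices of $T$, then $(\mathbf{c3})$, $(\mathbf{c4})$, or the inductive extension of Remark~1.2 for $k\geq 5$, combined with compactness of $\Omega$, yields a uniform strict bound $\sum_j\Theta_\infty(e_j)\leq(k-2)\pi-\delta$, contradicting the identity. If the link does not separate vertices (i.e., $V\setminus V_0$ is precisely the link), then: for $k=3$ the link coincides with an outside face of $T$ and $(\mathbf{c2})$ gives $\sum>\pi$; for $k=4$ one sums the $(\mathbf{c1})$ inequalities over pairs of consecutive link edges whose $T$-endpoints remain non-adjacent after the outside polygon is triangulated (invoking the double-tetrahedron strict form of $(\mathbf{c1})$ when $T$ is the double tetrahedron, and the compactness-induced uniform slack in $\Omega$ otherwise); for $k\geq 5$ one proceeds analogously. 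I expect the main obstacle to be this non-separating case: one must combinatorially verify that enough consecutive pairs of link edges have $T$-non-adjacent endpoints, and that compactness of $\Omega$ in $W$ upgrades the weak $(\mathbf{c1})$ inequality to a uniform strict bound, so that the summed $(\mathbf{c1})$ breaks the equality $\sum_j\Theta_\infty(e_j)=(k-2)\pi$ in every subcase.
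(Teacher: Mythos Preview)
Your blow-up/Gauss--Bonnet strategy is a classical and natural one, but it diverges substantially from the paper's argument, and the very place you flag as ``the main obstacle'' is a genuine gap that your outline does not close.

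The decisive issue is the non-separating link case. Your proposed fix is to sum instances of $\mathrm{\mathbf{(c1)}}$ over suitable pairs of consecutive link edges and invoke ``compactness-induced uniform slack in $\Omega$''. But $\mathrm{\mathbf{(c1)}}$ is a \emph{non-strict} inequality $\Theta(e_1)+\Theta(e_2)\le\pi$, and $W$ is defined using this non-strict form. A compact $\Omega\subset W$ may therefore contain points at which $\mathrm{\mathbf{(c1)}}$ holds with equality, so compactness yields no slack. Concretely, take $k=4$ with link vertices $a,b,c,d$ and outside diagonal $[a,c]$. Your identity forces $\sum_{j}\Theta_\infty(e_j)=2\pi$, while $\mathrm{\mathbf{(c1)}}$ applies only to the pairs through $c$ and through $a$ (since $b,d$ are non-adjacent but $a,c$ are), giving $\Theta_\infty([b,c])+\Theta_\infty([c,d])\le\pi$ and $\Theta_\infty([d,a])+\Theta_\infty([a,b])\le\pi$. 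These combine to $\sum\le 2\pi$, equality included; one checks easily that there are $\Theta_\infty\in W$ realizing equality and satisfying $\mathrm{\mathbf{(c2)}}$ on both outside triangles, so no contradiction arises. The analogous problem persists for $k\ge 5$. (There are also softer gaps you would need to address: connectedness of $V_0$, simplicity of the link cycle, and the precise justification of the angle formulas in the spherical-to-Euclidean blow-up.)

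The paper's proof avoids this trap by a completely different, local mechanism. It works with $V^{+}=\{v:\lim r_n(v)>0\}$ versus $V^{0}$, picks $v_\alpha\in V^{+}$ adjacent to $V^{0}$, and among the neighbours of $v_\alpha$ isolates a maximal arc $u_2,\dots,u_{j-1}\in V^{0}$ bracketed by $u_1,u_j\in V^{+}$. The collapse of that arc forces $\partial D_\infty(u_1)\cap\partial D_\infty(u_j)\cap\partial D_\infty(v_\alpha)\neq\emptyset$, and Lemmas~\ref{L-4-2} and~\ref{L-4-3} then show $u_1,u_j$ are \emph{non-adjacent}; Lemma~\ref{L-4-4} gives $\Theta_\infty([v_\alpha,u_1])+\Theta_\infty([u_j,v_\alpha])=\pi$. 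Iterating once around $u_j$ produces $w_k\in V^{+}$ non-adjacent to $v_\alpha$. If $u_1,w_k$ are adjacent one obtains a $4$-cycle $v_\alpha,u_1,w_k,u_j$ with angle sum $2\pi$; crucially, since both diagonals $[u_1,u_j]$ and $[v_\alpha,w_k]$ have been shown \emph{not} to exist, this $4$-cycle is automatically separating and $\mathrm{\mathbf{(c4)}}$ gives the contradiction. If $u_1,w_k$ are non-adjacent, one gets $\Theta_\infty([u_j,w_k])=\pi$, contradicting $\Theta_\infty\in\Omega\subset(0,\pi)^{|E|}$. Thus the paper never needs slack in $\mathrm{\mathbf{(c1)}}$; the non-adjacency of the diagonals is what makes the cycle separating and delivers the strict inequality via $\mathrm{\mathbf{(c4)}}$.
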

The lemma relies on subtle analysis with the proof postponed to the appendix.

A circle pattern $\mathcal P=\{D(v)\}_{v\in V}$ on $(\hat{\mathbb C},\mathrm{d s})$ is said to be \textbf{$T$-type} if there is a geodesic triangulation of $(\hat{\mathbb C},\mathrm{d s})$ isotopic to $T$ with vertices given by the centers of the disks in $\mathcal P$. We  write the interior of each closed disk $D(v)$ as $\mathbb D(v)$.
\begin{lemma}\label{L-4-7}
Let $\mathcal P=\{D(v)\}_{v\in V}$ be a \textbf{$T$-type} circle pattern on $(\hat{\mathbb C},\mathrm{d s})$ with overlap angle function $\Theta: E\to(0,\pi)$ satisfying condition $\mathrm{\mathbf{(c2)}}$. Suppose $v_\alpha,v_\beta\in V$ are two distinct, non-adjacent vertices. For each $p\in D(v_\alpha)\cap D(v_\beta)$,  then there exists  $v_\eta\in V\setminus\{v_\alpha,v_\beta\}$ such that $p\in\mathbb D(v_\eta)$.
\end{lemma}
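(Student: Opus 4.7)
The plan is to localize $p$ within the geodesic triangulation furnished by the $T$-type hypothesis, and then use the angle sum condition $\mathrm{(\mathbf{c2})}$ to compel coverage of the containing triangle by its three vertex disks. Concretely, let $\tau$ denote the closed geodesic triangle of the triangulation that contains $p$, and write its vertices as $v_i, v_j, v_k$. Since $v_\alpha, v_\beta$ are non-adjacent in $T$, while any two vertices of a single triangle are adjacent, the set $\{v_\alpha, v_\beta\}$ cannot be contained in $\{v_i, v_j, v_k\}$; at least one (and often all) of the vertices of $\tau$ will be distinct from both $v_\alpha$ and $v_\beta$.

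The core technical step is to establish that $\tau^\circ \subseteq \mathbb D(v_i)\cup \mathbb D(v_j)\cup \mathbb D(v_k)$ under condition $\mathrm{(\mathbf{c2})}$. I would proceed in two stages. First, I would verify the closed coverage $\tau \subseteq D(v_i)\cup D(v_j)\cup D(v_k)$: the only way closed coverage can fail is the presence of a curvilinear triangular interstice whose corner angles satisfy a classical spherical angle identity, and the strict inequality $\sum_\mu \Theta(e_\mu) > \pi$ from $\mathrm{(\mathbf{c2})}$ forces this interstice to degenerate. Second, I would upgrade to open coverage by contradiction: if some $q \in \tau^\circ$ lies in no open vertex disk, then $q$ sits on some boundary $\partial D(v_\mu)$; by Lemma~\ref{L-4-2}, $q$ cannot lie on all three circles simultaneously, so a local analysis of the four sectors cut out near $q$ by $\partial D(v_\mu)$ and $\partial D(v_\nu)$ produces one sector lying outside both disks, where the closed coverage then forces containment in $D(v_\rho)$, and by closedness $q\in D(v_\rho)$, so in fact $q\in\mathbb D(v_\rho)$, contradicting the assumption.

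Granted this open coverage, the conclusion in the generic case where $\{v_i,v_j,v_k\}\cap\{v_\alpha,v_\beta\}=\emptyset$ and $p\in\tau^\circ$ is immediate: pick any $v_\mu\in\{v_i,v_j,v_k\}$ with $p\in\mathbb D(v_\mu)$ and set $v_\eta=v_\mu$. When exactly one of $v_\alpha,v_\beta$ is a vertex of $\tau$, say $v_i=v_\alpha$, and $p$ lies in $\mathbb D(v_j)$ or $\mathbb D(v_k)$, we take $v_\eta$ to be the corresponding other vertex, which is automatically distinct from $v_\beta$ since $v_j,v_k$ are adjacent to $v_\alpha$. Boundary cases where $p\in\partial\tau$ are handled by passing to the adjacent triangle across the edge carrying $p$ (which supplies an additional candidate vertex), or, if $p$ is a vertex $z_\mu$ of the triangulation, by the trivial observation $z_\mu\in\mathbb D(v_\mu)$ combined with the adjacency structure around $v_\mu$.

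The main obstacle I anticipate is the residual subcase where $v_i=v_\alpha$ but $p\in\mathbb D(v_\alpha)$ while $p\notin\mathbb D(v_j)\cup\mathbb D(v_k)$. Here the vertex disks of $\tau$ alone do not supply an admissible $v_\eta$, and one must exploit the intrusion of $D(v_\beta)$ into $\tau$ (its center $z_\beta$ lies outside $\tau$ since $v_\beta$ is neither a vertex of $\tau$ nor adjacent to $v_\alpha$). My plan is to track the arc $\partial D(v_\beta)\cap\tau$, note that it must meet $\partial\tau$ on an edge whose points are covered by $D(v_j)\cup D(v_k)$, and follow the intersection $\partial D(v_\beta)\cap (\partial D(v_j)\cup\partial D(v_k))$ inside $\tau$: a sector analysis at such intersection points together with Lemma~\ref{L-4-2} should force $p$ into $\mathbb D(v_j)\cup\mathbb D(v_k)$ after all, closing this subcase and completing the proof.
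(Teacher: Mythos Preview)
Your localization to a single geodesic triangle $\tau$ is natural, but the ``residual subcase'' you flag is a genuine gap, and your proposed fix does not close it. When $v_i=v_\alpha$ and $p\in\mathbb D(v_\alpha)\setminus\bigl(\mathbb D(v_j)\cup\mathbb D(v_k)\bigr)$, there is no reason whatsoever that tracking $\partial D(v_\beta)$ through $\tau$ will force $p$ into $\mathbb D(v_j)\cup\mathbb D(v_k)$: a point near the center $z_\alpha$ can perfectly well lie in $D(v_\alpha)\cap D(v_\beta)$ while staying outside both $D(v_j)$ and $D(v_k)$, and nothing in your sector analysis rules this out. The conclusion you are trying to reach in that subcase is simply false, so the argument cannot be repaired locally within $\tau$.

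The paper's proof avoids this by treating $v_\alpha$ and $v_\beta$ symmetrically rather than privileging a single triangle. It shows that under $\mathrm{\mathbf{(c2)}}$ each sector of $D(v_\alpha)$ between consecutive neighbor rays is contained in the corresponding closed triangle together with the two adjacent open neighbor disks; summing over all neighbors and noting $\partial K(v_\alpha)\subset\cup_\mu \mathbb D(u_\mu)$ gives
\[
D(v_\alpha)\subset\Bigl(\bigcup_{\mu}\mathbb D(u_\mu)\Bigr)\cup \mathbb K(v_\alpha),
\]
where $\mathbb K(v_\alpha)$ is the open star of $v_\alpha$ in the geodesic triangulation. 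The same holds for $v_\beta$. If $p$ lay in no $\mathbb D(v_\eta)$ with $v_\eta\neq v_\alpha,v_\beta$, it would in particular miss every neighbor disk of $v_\alpha$ and every neighbor disk of $v_\beta$ (neighbors of $v_\alpha$ are never $v_\beta$ and vice versa, by non-adjacency), hence $p\in\mathbb K(v_\alpha)\cap\mathbb K(v_\beta)$; but the open stars of two non-adjacent vertices in a triangulation are disjoint. This is exactly the global combinatorial fact your single-triangle approach cannot see, and it is what replaces your unworkable residual subcase.
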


\begin{proof}
As in Figure~\ref{F-3}, suppose $u_{1},\cdots, u_{d}$, in anticlockwise order, are all adjacent vertices of $v_\alpha$. For $\mu=1,\cdots,d$, let $l(v_\alpha u_\mu)$ be the geodesic ray starting from $v_\alpha$ along the direction to $u_\mu$ and let $\operatorname{Sec}(v_\alpha u_\mu u_{\mu+1})\subset D(v_\alpha)$ be the circular sector between $l(v_\alpha u_\mu)$ and $l(v_\alpha u_{\mu+1})$ (set $u_{d+1}=u_1$). Under condition $\mathrm{\mathbf{(c2)}}$, we are ready to see
\[
\operatorname{Sec}(v_\alpha u_\mu u_{\mu+1})\subset \Delta(v_\alpha u_\mu u_{\mu+1})\cup \mathbb D(u_\mu)\cup \mathbb {D}(u_{\mu+1}).
\]
where $\Delta(v_\alpha u_\mu u_{\mu+1})$ denotes the geodesic triangle with vertices $v_\alpha, u_\mu, u_{\mu+1}$. Hence
\[
D(v_\alpha)=\cup_{\mu=1}^d\operatorname{Sec}(v_\alpha u_\mu u_{\mu+1})\subset \big(\cup_{\mu=1}^d \mathbb D(u_\mu)\big)\cup\big(\cup_{\mu=1}^d\Delta(v_\alpha u_\mu u_{\mu+1})\big).
\]
Set $\operatorname{K}(v_\alpha)=\cup_{\mu=1}^d\Delta(v_\alpha u_\mu u_{\mu+1})$ and $\operatorname{\mathbb{K}}(v_\alpha)=\operatorname{K}(v_\alpha)\setminus \partial \operatorname{K}(v_\alpha)$. Notice that
\[
\partial \operatorname{K}(v_\alpha)\subset \cup_{\mu=1}^d \mathbb D(u_\mu).
\]
Consequently,
\begin{equation}\label{E-4-1}
D(v_\alpha)\subset \big(\cup_{\mu=1}^d \mathbb D(u_\mu)\big)\cup \operatorname{\mathbb{K}}(v_\alpha).
\end{equation}

\begin{figure}[htbp]\centering
\includegraphics[width=0.6\textwidth]{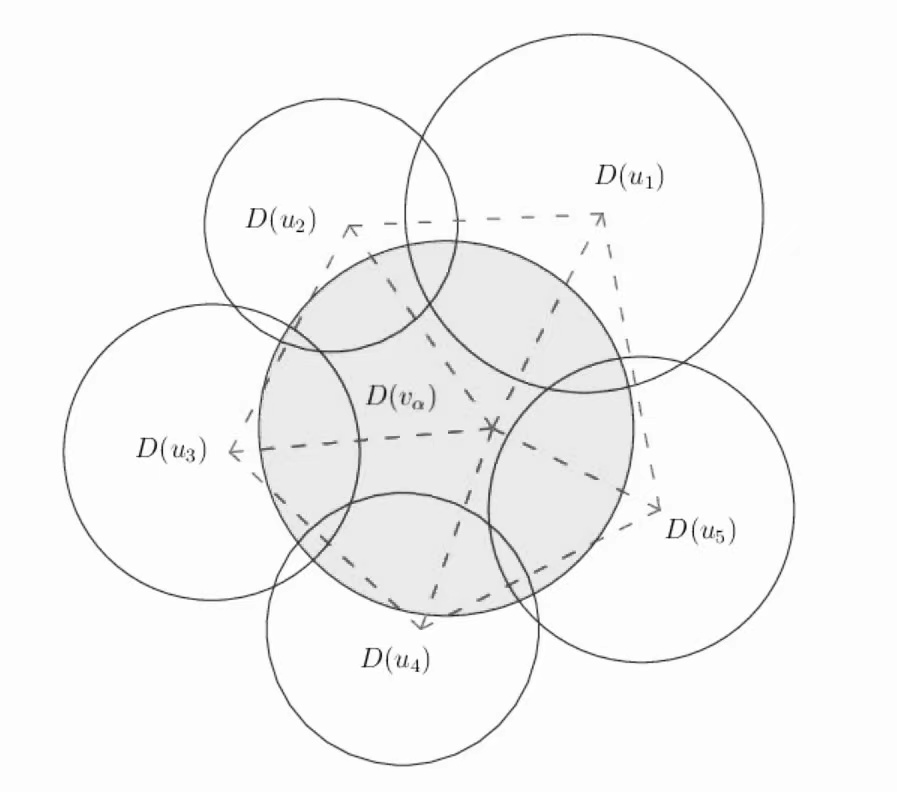}
\caption{The central disk is covered by the union of linked triangles and adjacent disks}\label{F-3}
\end{figure}

We now assume, by contradiction, that there exists
$p_0\in D(v_\alpha)\cap D(v_\beta)$ which does not belong to any open disks of $\mathcal P$ except for $\mathbb D(v_\alpha), \mathbb D(v_\beta)$. Particularly,
\[
p_0\notin \cup_{\mu=1}^d \mathbb D(u_{\mu}).
\]
Taking~\eqref{E-4-1} into consideration, one obtains
\[
p_0 \in \operatorname{\mathbb{K}}(v_\alpha).
\]
Similarly, we define $\operatorname{\mathbb{K}}(v_\beta)$ and check that
\[
p_0 \in \operatorname{\mathbb{K}}(v_\beta).
\]
Thus $\operatorname{\mathbb{K}}(v_\alpha)\cap\operatorname{\mathbb{K}}(v_\beta)$ is non-empty, which occurs if and only if there exists an edge between $v_\alpha$ and $v_\beta$. This contradicts that $v_\alpha,v_\beta$ are non-adjacent vertices.
\end{proof}

\section{Normal Family Theorem for circle patterns}\label{S-5a}
We are now in a position to prove the Normal Family Theorem. The idea is analogous to the regularization process used in PDE theory.

\begin{proof}[\textbf{Proof of Theorem~\ref{T-1-5}}]
Let $v_a,v_b,v_c$ be the vertices of a triangle of $T$. Using suitable linear  fractional maps, we may assume each pattern $\mathcal P_{n}=\{D_n(v)\}_{v\in V}$ satisfies
\begin{equation}\label{E-5-1}
r(D_n(v_a))=r(D_n(v_b))=r(D_n(v_c))=\pi/2.
\end{equation}
According to Lemma~\ref{L-4-5}, we derive
\[
r(D_n(v))\leq\pi/2,\quad \forall\, v\in V.
\]
Meanwhile, we claim there exists $\delta\in(0,\pi/2]$ such that
\[
r(D_n(v))\geq \delta,\quad \forall\, v\in V.
\]
Otherwise, by passing to a subsequence if necessary, there exists $v_0\in V$ such that
\[
r(D_n(v_0))\to 0.
\]
From Lemma~\ref{L-4-6}, it follows that
\[
r(D_n(v))\to 0,\quad \forall\, v\in V,
\]
which contradicts~\eqref{E-5-1}.

Therefore, $\{\mathcal P_n\}$ is parameterized by a sequence in
$\hat{\mathbb C}^{|V|}\times [\delta,\pi/2]^{|V|}$. Taking a subsequence if necessary, we may assume $\{\mathcal P_n\}$ converges to a circle pattern $\mathcal P_\infty=\{D_\infty(v)\}_{v\in V}$ on $(\hat{\mathbb C},\mathrm{d s})$ with the following property:
\[
\delta\leq r(D_\infty(v))\leq\pi/2,\quad \forall\, v\in V.
\]
Recalling that $\{\Theta_n\}$ stays in a compact subset of $W$, we check the overlap angle function $\Theta_\infty$ of $\mathcal P_\infty$ satisfies conditions $\mathrm{\mathbf{(c1)}}-\mathrm{\mathbf{(c4)}}$. By Lemma~\ref{L-4-1}, the geodesic triangle of centers for each face of $T$ is well-defined, which indicates $\mathcal P_\infty$ is a \textbf{$T$-type} circle pattern.

To check $\mathcal P_\infty$ has the same contact graph with $\mathcal P_n$, it suffices to show $D_{\infty}(v_\alpha),D_{\infty}(v_\beta)$ are disjoint for each pair of non-adjacent vertices $v_\alpha,v_\beta\in V$. Assume on the contrary  $D_{\infty}(v_\alpha)\cap D_{\infty}(v_\beta)\neq\emptyset$. Then it is easy to see the disks $D_{\infty}(v_\alpha), D_{\infty}(v_\beta)$ are externally tangent. Otherwise, $D_{\infty}(v_\alpha)\cap D_{\infty}(v_\beta)$ contains interior points, which implies $D_{n}(v_\alpha)\cap D_{n}(v_\beta)$ also contains interior points for $n$ sufficiently large and thus contradicts the assumption that $D_{n}(v_\alpha),D_{n}(v_\beta)$ are disjoint. Now there are two cases to distinguish:

\begin{figure}[htbp]\centering
\includegraphics[width=0.65\textwidth]{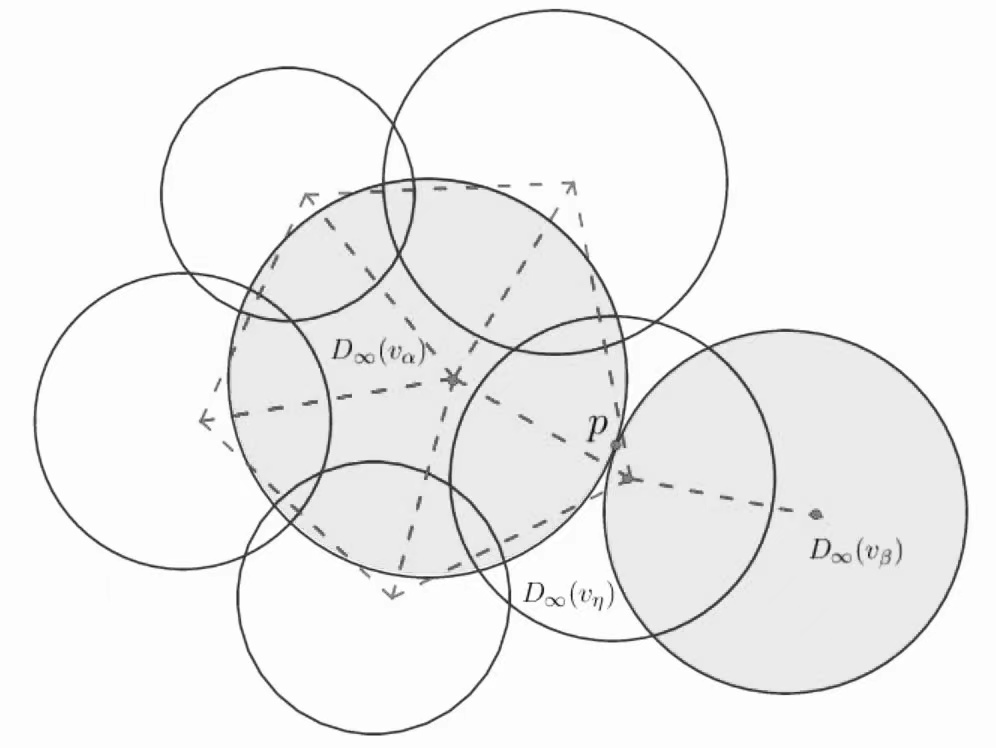}
\caption{If the disks $D_\infty(v_\alpha),D_\infty(v_\beta)$ touch at a point, Lemma~\ref{L-4-4} implies the overlap angle function violates condition $\mathrm{\mathbf{(c1)}}$}
\end{figure}

\begin{itemize}
\item[$(i)$] $D_{\infty}(v_\alpha)\cap D_{\infty}(v_\beta)$ consists of a unique point $p$. Recall that $\mathcal P_\infty$ is a \textbf{$T$-type} circle pattern with overlap angle function $\Theta_\infty$ satisfying condition $\mathrm{\mathbf{(c2)}}$. Due to Lemma~\ref{L-4-7}, there exists $v_\eta\in V\setminus\{v_\alpha,v_\beta\}$ such that
\[
p\in\mathbb D_{\infty}(v_\eta),
\]
which implies both $D_{\infty}(v_\alpha)\cap D_{\infty}(v_\eta)$ and $D_{\infty}(v_\beta)\cap D_{\infty}(v_\eta)$ contain interior points. We claim $v_\eta$ is an adjacent vertex of both $v_\alpha$ and $v_\beta$. Otherwise, either $D_{\infty}(v_\alpha), D_{\infty}(v_\eta)$ or $D_{\infty}(v_\beta), D_{\infty}(v_\eta)$ form a pair of externally tangent disks, which leads to contradictions. Notice that
    \[
D_{\infty}(v_\alpha)\cap D_{\infty}(v_\beta)=\{p\}\subset\mathbb D_{\infty}(v_\eta).
    \]
Lemma~\ref{L-4-4} gives
\[
\Theta_\infty([v_\alpha,v_\eta])+\Theta_\infty([v_\eta,v_\beta])>\pi,
\]
which violates condition $\mathrm{\mathbf{(c1)}}$.
\item[$(ii)$] $D_{\infty}(v_\alpha)\cap D_{\infty}(v_\beta)=\partial D_{\infty}(v_\alpha)=\partial D_{\infty}(v_\beta)$. That means $\mathbb D_{\infty}(v_\beta)$ is the complement of $ D_{\infty}(v_\alpha)$. In case that every $v_\tau\in V\setminus\{v_\alpha,v_\beta\}$ is an adjacent vertex of  both $v_\alpha$ and $v_\beta$, then $T$ is the boundary of an $m$-gonal bipyramid. Apparently,
\[
\Theta_\infty([v_\alpha,v_\tau])+\Theta_\infty([v_\tau,v_\beta])=\pi.
\]
If $m=3$, then $T$ is the boundary of a double tetrahedron, which means the above formula  violates  condition $\mathrm{\mathbf{(c1)}}$. If $m\geq 4$, we can find edges $e_1,e_2,e_3,e_4$  forming a simple closed curve separating the vertices of $T$ such that
\[
\sum\nolimits_{\mu=1}^4 \Theta(e_\mu)= 2\pi,
\]
which violates condition $\mathrm{\mathbf{(c4)}}$.

It remains to consider the case that there exists $v_{\tau_0}\in V\setminus\{v_\alpha,v_\beta\}$ which is non-adjacent to $v_\alpha$ or $v_\beta$. Without loss of generality, suppose $v_{\tau_0}$ is not adjacent to $v_\alpha$. Then the disks $D_{\infty}(v_{\tau_0}), D_{\infty}(v_\alpha)$ are externally tangent, which implies
\[
\mathbb D_{\infty}(v_{\tau_0})\subset\hat{\mathbb C}\setminus D_{\infty}(v_\alpha)=\mathbb D_{\infty}(v_\beta).
\]
As a result,
\[
\mathbb D_{\infty}(v_{\tau_0})\cap\mathbb D_{\infty}(v_\beta)=\mathbb D_{\infty}(v_{\tau_0}).
\]
On the other hand, when  $v_{\tau_0},v_\beta$ are non-adjacent vertices, we have
\[
\mathbb D_{\infty}(v_{\tau_0})\cap\mathbb D_{\infty}(v_\beta)=\emptyset.
\]
When $v_{\tau_0},v_\beta$ are adjacent vertices, we know $\Theta_\infty([v_{\tau_0},v_\beta])\in(0,\pi)$, which yields
\[
\mathbb D_{\infty}(v_{\tau_0})\cap\mathbb D_{\infty}(v_\beta)\subsetneq \mathbb D_{\infty}(v_{\tau_0}).
\]
No matter which case occurs, this leads to contradictions.
\end{itemize}

We thus prove $\mathcal P_\infty$ has contact graph isomorphic to the $1$-skeleton of $T$. To check $\mathcal P_\infty$ is \textbf{irreducible}, we need to show
\[
\cup_{w\in A} D_{\infty}(w)\subsetneq\hat{\mathbb C}
\]
for each strict subset $A$ of $V$. Recall that $r(D_\infty(v))\leq \pi/2$ for all $v\in V$. By Lemma~\ref{L-4-3}, we assert $\mathcal P_\infty$ satisfies condition $\mathrm{\mathbf{(cr)}}$. Choose a triangle of $T$ with vertices $v_i,v_j\in V$, $v_k\in V\setminus A$. For each $w\in V\setminus\{v_i,v_j,v_k\}$, it follows from Lemma~\ref{L-3-2} that
\[
D_{\infty}(v_i)\cap D_{\infty}(v_j)\cap D_{\infty}(v_k)\cap D_{\infty}(w)=\emptyset.
\]
In particular, the point $q=\partial D_{\infty}(v_i)\cap\partial D_{\infty}(v_j)\cap D_{\infty}(v_k)$ does not belong to any disks in $\mathcal P_\infty$ except for
$D_{\infty}(v_i), D_{\infty}(v_j),D_{\infty}(v_k)$. Hence there is a neighborhood $N_q$ of $q$ such that
\[
N_q\cap D_\infty(w)=\emptyset,\quad\forall\,w\in V\setminus\{v_i,v_j,v_k\}.
\]
Setting $O_q=N_q\setminus(D_{\infty}(v_i)\cup D_{\infty}(v_j))$, we obtain
\[
\big(\cup_{w\in A}D_{\infty}(w)\big)\cap O_q=\emptyset.
\]
Under condition $\mathrm{\mathbf{(c2)}}$, it is easy to see $O_q\neq\emptyset$. Therefore,
\[
\cup_{w\in A} D_{\infty}(w)\subset\hat{\mathbb C}\setminus O_q  \subsetneq\hat{\mathbb C},
\]
which concludes $\mathcal P_\infty$ is \textbf{irreducible}. The proof is completed.
\end{proof}

Combining Theorem~\ref{T-1-5} and Theorem~\ref{T-3-1}, we have the following Normal Family Theorem for hyperbolic polyhedra. Let $\Lambda$ be the set of dihedral angle functions $\Theta:\mathcal E\to(0,\pi)$ satisfying conditions $\mathrm{\mathbf{(a1)}}-\mathrm{\mathbf{(a4)}}$.
\begin{theorem}\label{T-5-1}
Let $\{Q_n\}$ be a sequence of compact convex polyhedra in $\mathbb H^3$, each of which is combinatorially equivalent to an abstract trivalent polyhedron $P$ with more than four faces. Then $\{Q_n\}$ (modulo isometries of $\mathbb H^3$) contains a subsequence convergent to a compact convex polyhedron in $\mathbb H^3$ of the same combinatorial type if the corresponding dihedral angle function sequence $\{\Theta_n\}$ stays in a compact subset of $\Lambda$.
\end{theorem}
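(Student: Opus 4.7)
The plan is to dualize everything back to circle patterns, invoke Theorem~\ref{T-1-5}, and then recover a convergent sequence of polyhedra via Theorem~\ref{T-3-1}.

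Let $T$ be the triangulation of the sphere dual to the boundary decomposition of $P$; since $P$ has more than four faces, $T$ has more than four vertices. For each $n$, the Thurston correspondence recalled at the beginning of Section~\ref{S-3a} produces a circle pattern $\mathcal{P}_n=\{D_n(v)\}_{v\in V}$ on $\hat{\mathbb C}=\partial \mathbb H^3$ whose contact graph is isomorphic to the $1$-skeleton of $T$ and whose overlap angles agree with the dihedral angles $\Theta_n$ of $Q_n$. I first observe that each $\mathcal{P}_n$ is automatically \textbf{irreducible}: for any proper subset $A\subsetneq V$, omitting even one face enlarges the intersection to the unbounded region $\bigcap_{v\in A}H_n(v)\supsetneq Q_n$, whose nonempty ideal boundary provides points of $\hat{\mathbb C}$ not covered by $\bigcup_{v\in A}D_n(v)$. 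Moreover, by the duality already used in the proof of Theorem~\ref{T-1-3}, the assumption that $\{\Theta_n\}$ lies in a compact subset of $\Lambda$ translates to the statement that the induced overlap angle functions $\{\Theta_n^\ast:E\to(0,\pi)\}$ lie in a compact subset of $W$.

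Next I apply Theorem~\ref{T-1-5} to $\{\mathcal{P}_n\}$. After normalizing by suitable linear or anti-linear fractional maps of $\hat{\mathbb C}$---which are precisely the boundary values of isometries of $\mathbb H^3$---a subsequence converges to an \textbf{irreducible} circle pattern $\mathcal{P}_\infty$ on $\hat{\mathbb C}$ with the same contact graph and with a limiting overlap angle function $\Theta_\infty^\ast$ still satisfying $\mathrm{\mathbf{(c1)}}$-$\mathrm{\mathbf{(c4)}}$. Repeating the short argument at the end of the proof of Theorem~\ref{T-1-3}, irreducibility together with $\mathrm{\mathbf{(c3)}}$ and Lemma~\ref{L-3-3} shows that $\mathcal{P}_\infty$ satisfies the separating condition $\mathrm{\mathbf{(cr)}}$. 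Theorem~\ref{T-3-1} then yields a compact convex hyperbolic polyhedron $Q_\infty:=Q(\mathcal{P}_\infty)$ with boundary decomposition dual to $T$ (hence combinatorially equivalent to $P$) and with dihedral angles equal to $\Theta_\infty$.

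It remains to upgrade subsequential convergence of circle patterns to subsequential convergence of the polyhedra themselves. Each half-space $H_n(v)$ depends continuously on the center and radius of $D_n(v)$ in the spherical metric, so $H_n(v)\to H_\infty(v)$ in the Chabauty topology on closed subsets of $\overline{\mathbb H^3}$; intersecting over $v\in V$ and using that $Q_\infty$ is compact, one obtains Hausdorff convergence $Q_n\to Q_\infty$ in $\mathbb H^3$. The delicate point in the whole argument is the invocation of Theorem~\ref{T-1-5}, which must simultaneously prevent any single disk from degenerating (to a point or to all of $\hat{\mathbb C}$) and rule out the creation of ``tangential'' extraneous overlaps in the limit; these are precisely the obstacles controlled by conditions $\mathrm{\mathbf{(c1)}}$-$\mathrm{\mathbf{(c4)}}$ through the prior estimates of Section~\ref{S-4a}, and it is this step, rather than the combinatorial dualization or the final convergence of half-spaces, that does the real work.
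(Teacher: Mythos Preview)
Your proposal is correct and follows exactly the route the paper indicates: the paper states Theorem~\ref{T-5-1} immediately after the proof of Theorem~\ref{T-1-5} with only the sentence ``Combining Theorem~\ref{T-1-5} and Theorem~\ref{T-3-1}, we have the following,'' and your argument is a faithful expansion of that sketch.

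One expository point worth tightening: the passage from $Q_n$ back to a circle pattern $\mathcal P_n$ with contact graph \emph{exactly} the $1$-skeleton of $T$, and the irreducibility of that pattern, are not quite as automatic as you suggest. For a general compact convex hyperbolic polyhedron, the supporting planes of two non-adjacent faces may still intersect in $\mathbb H^3$, which would produce an extraneous overlap in the associated disk collection. The cleanest way to close this is to invoke Theorem~\ref{T-1-1} and Theorem~\ref{T-3-4}: since $\Theta_n^\ast\in W$, Theorem~\ref{T-1-1} supplies an \textbf{irreducible} pattern $\mathcal P_n'$ with contact graph $T$ and overlap angles $\Theta_n^\ast$; Theorem~\ref{T-3-1} then gives $Q(\mathcal P_n')$ with the correct combinatorics and dihedral angles; and Rivin--Hodgson rigidity forces $Q(\mathcal P_n')\cong Q_n$, so after an isometry the disks read off from $Q_n$ are precisely those of $\mathcal P_n'$. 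This simultaneously certifies the contact graph and the irreducibility without the separate ideal-boundary argument. With that adjustment your proof is complete and matches the paper's intended argument.
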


\begin{remark}
A more direct approach to this theorem might be following the works of Andreev~\cite{Andreev}, Rivin-Hodgson~\cite{Rivin-Hodgson} and Roeder-Hubbard-Dunbar~\cite{Roeder-Hubbard-Dunbar}. In particular, most arguments in~\cite{Andreev,Roeder-Hubbard-Dunbar} remain valid in our situation. The only substantial modification is to check that all faces of the limit polyhedron are still parallelograms. Toward this end, Andreev~\cite{Andreev} and Roeder-Hubbard-Dunbar~\cite{Roeder-Hubbard-Dunbar} made the use of Gauss-Bonnet formula, while our observations (Lemma~\ref{L-4-4} and Lemma~\ref{L-4-7}) indicate that condition $\mathrm{\mathbf{(a1)}}$ is approximately adequate for this purpose. 
\end{remark}

\section{Some questions}\label{S-6a}
The paper leaves open a number of questions. We think the following are some particularly interesting subjects for further developments.

\medskip
1. \textbf{Further generalization of Andreev's Theorem}. As before, let $P$ be an abstract trivalent polyhedron with $\mathcal E$ being the set of edges. We say two edges $e_i,e_j$ of $P$ form a \textbf{Whitehead pair} if $e_i,e_j$ belong to a common face of $P$ and all of their endpoints are distinct. Precisely, we ask whether the following conjecture holds. For simplicity, here we do not include the case that $P$ is the triangular prism.
\begin{conjecture}
Let $P$ be an abstract trivalent polyhedron with more than five faces. Assume that $\Theta:\mathcal{E}\to (0,\pi)$ is a function satisfying the  conditions below:
\begin{itemize}
\item[$\mathbf{(s1)}$] Whenever $\Gamma$ is a curve forming the boundary of the union of two adjacent triangles of the dual complex $P^\ast$ and intersecting edges $e_1,e_2,e_3,e_4$ such that both $e_1,e_2$ and $e_3,e_4$ form \textbf{Whitehead pairs},   then either $\Theta(e_1)+\Theta(e_2)\leq \pi$ or $\Theta(e_3)+\Theta(e_4)\leq \pi$.
\item[$\mathbf{(s2)}$] Whenever three distinct edges $e_1,e_2,e_3$ of $P$ meet at a vertex, then $\sum_{\mu=1}^3\Theta(e_\mu)>\pi$,  and
    $\Theta(e_1)+\Theta(e_2)<\Theta(e_3)+\pi$, $\Theta(e_2)+\Theta(e_3)<\Theta(e_1)+\pi$, $\Theta(e_3)+\Theta(e_1)<\Theta(e_2)+\pi$.
\item[$\mathbf{(s3)}$] Whenever $\Gamma$ is a \textbf{prismatic k-circuit} intersecting edges $e_1,e_2,\cdots,e_k$, then $\sum_{\mu=1}^k\Theta(e_\mu)<(k-2)\pi$.
\end{itemize}
Then there exists a compact convex hyperbolic polyhedron $Q$ combinatorially equivalent to $P$ with dihedral angles given by $\Theta$. Furthermore, $Q$ is unique up to isometries of $\mathbb H^3$.
\end{conjecture}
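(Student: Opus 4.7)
The plan is to adapt the scheme of Theorem~\ref{T-1-3} to the weakened hypotheses. Dualize to the triangulation $T=P^\ast$ with angle function $\Theta^\ast:E\to(0,\pi)$. Condition $\mathbf{(s2)}$ translates verbatim to $\mathbf{(c2)}$ on $T$, and $\mathbf{(s3)}$ becomes the uniform separating-$k$-cycle inequality $\sum_{\mu=1}^k\Theta^\ast(e_\mu)<(k-2)\pi$ for every $k\geq 3$. The essential weakening is $\mathbf{(s1)}$: where $\mathbf{(c1)}$ imposes $\Theta(e_1)+\Theta(e_2)\leq\pi$ on every 2-arc of $T$ with non-adjacent outer endpoints, $\mathbf{(s1)}$ asks only that at least one of the two ``opposite'' 2-arcs across any 4-cycle in $T$ bounding two adjacent triangles saturates this bound. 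In particular, the inductive derivation of the $k\geq 5$ case from the other conditions (cf.\ the remark following Theorem~\ref{T-1-1}) is no longer available, which is why $\mathbf{(s3)}$ must be posited for all $k$.

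I would then recycle the configuration-space architecture of Section~\ref{S-2a}, introducing $M\supset M_E\supset M_\Theta\supset M_G\supset M_{IG}\supset M_{IG}^\star$ with $\mathbf{(x1)}$ replaced by the disjunctive analog of $\mathbf{(s1)}$, and form a test map $f:M_{IG}^\star\to W_s$ onto the admissible space $W_s$. Surjectivity via topological degree would proceed exactly as in Proposition~\ref{P-2-5}: a linear interpolation as in Lemma~\ref{L-2-1} produces a regular value in the acute cone $(0,\pi/2)^{|E|}$, Andreev's Theorem (Lemma~\ref{L-2-2}) supplies a unique preimage there, and so $\deg(f)=\pm 1$ provided $f$ is proper. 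The polyhedron is then built via Theorem~\ref{T-3-1}, with condition $\mathbf{(cr)}$ verified using $\mathbf{(s3)}$ for $k=3$ together with Lemma~\ref{L-3-3} and irreducibility; uniqueness is immediate from Theorem~\ref{T-3-4}.

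The crux is an upgraded Normal Family Theorem. In the present paper a limit tangential extraneous overlap between non-adjacent $v_\alpha,v_\beta$ at a point $p$ is ruled out by combining Lemma~\ref{L-4-4} and Lemma~\ref{L-4-7}: some common neighbor $v_\eta$ satisfies $p\in\mathbb{D}_\infty(v_\eta)$, which forces $\Theta_\infty([v_\alpha,v_\eta])+\Theta_\infty([v_\eta,v_\beta])>\pi$ and directly contradicts $\mathbf{(c1)}$. Under $\mathbf{(s1)}$ this single strict inequality is permissible, so the analysis must be refined to manufacture a simultaneous companion violation. My strategy would be to exploit the $T$-type combinatorics locally around the tangency: the disks surrounding $p$ on the side opposite to $D_\infty(v_\eta)$ should force the existence of a second common neighbor $v_\zeta$ with $[v_\eta,v_\zeta]$ an edge of $T$, so that $v_\alpha v_\eta v_\beta v_\zeta$ is exactly a 4-cycle bounding two adjacent triangles. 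A refined argument invoking $\mathbf{(s2)}$ and $\mathbf{(s3)}$ for $k=3,4$ to exclude alternative closures (e.g.\ $p\in\partial D_\infty(v_\zeta)$, or no such $v_\zeta$ existing at all) should yield $p\in\mathbb{D}_\infty(v_\zeta)$, producing the companion violation $\Theta_\infty([v_\alpha,v_\zeta])+\Theta_\infty([v_\zeta,v_\beta])>\pi$ and contradicting $\mathbf{(s1)}$.

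I expect this step to be the principal obstruction: the disjunctive form of $\mathbf{(s1)}$ removes the rigid local control on which Lemma~\ref{L-4-7} depends, and a properly placed companion $v_\zeta$ may genuinely fail to exist in certain degenerate configurations. Additionally, the full strength of $\mathbf{(s3)}$ for $k\geq 5$ will likely be needed to rule out long-range degenerations that are precluded ``for free'' in Theorem~\ref{T-1-3} by induction, and a finer stratification of $M_{IG}^\star$ according to which companion pairs in $\mathbf{(s1)}$ are saturated at equality versus strictly slack may be required to handle the edge cases. The exclusion of the triangular prism in the statement is consistent with the extremal tightness issues already flagged in $\mathbf{(c1)}$ for the double tetrahedron, and should be traced to the same source in the refined argument.
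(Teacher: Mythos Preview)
The statement you are attempting to prove is explicitly a \emph{conjecture} in the paper (Section~\ref{S-6a}); the paper offers no proof and presents it as an open problem. The only partial information the paper records is that the rigidity clause follows from Rivin--Hodgson (Theorem~\ref{T-3-4}) and that under $\mathbf{(s1)}$--$\mathbf{(s3)}$ a ``weak solution'' to the dual circle pattern problem exists---i.e., existence up to, but not including, the exclusion of extraneous overlaps. So there is no proof in the paper against which to compare your attempt.

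Viewed as a proof strategy, your outline faithfully recapitulates the paper's architecture and correctly locates the obstruction: in the Normal Family step, under $\mathbf{(c1)}$ a single invocation of Lemma~\ref{L-4-4} kills a limiting tangential overlap, whereas under the disjunctive $\mathbf{(s1)}$ one would need a \emph{pair} of such violations attached to the two Whitehead 2-arcs of a common 4-cycle bounding adjacent triangles. Your proposed mechanism---producing a companion common neighbor $v_\zeta$ with $[v_\eta,v_\zeta]\in E$ and $p\in\mathbb{D}_\infty(v_\zeta)$, so that $v_\alpha,v_\eta,v_\beta,v_\zeta$ sit in exactly the configuration $\mathbf{(s1)}$ addresses---is not established, and you yourself concede it may fail. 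This is the genuine gap: Lemma~\ref{L-4-7} guarantees only \emph{some} $v_\eta$ with $p\in\mathbb{D}_\infty(v_\eta)$, and nothing in the existing toolkit forces the local combinatorics around the tangency to assemble into the specific 4-cycle required by $\mathbf{(s1)}$. Without a new structural lemma to that effect, properness of the test map is unproved and the degree argument does not close.

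In short, your proposal is a coherent plan of attack aligned with the paper's own remarks on the conjecture, but it is not a proof; the step you flag as the ``principal obstruction'' is precisely why the statement remains open.
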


We believe it a promising conjecture based on the following reasons:
\begin{itemize}
\item{} It provides a unified generalization of Theorem~\ref{T-1-3} and Bao-Bonahon's Theorem on hyperideal polyhedra~\cite{Bao-Bonahon}.
\item{} The rigidity part has been proved by Rivin-Hodgson~\cite[Corollary 4.6]{Rivin-Hodgson}.
\item{} Under conditions $\mathbf{(s1)}-\mathbf{(s3)}$, one can show a "weak solution" for the corresponding circle pattern problem exists.
\end{itemize}

On the other hand, it might be much more difficult to further extend Theorem~\ref{T-1-1} once condition $\mathbf{(c1)}$ is relaxed. Actually, from the proof of Theorem~\ref{T-3-1}, we can see the requirement of avoiding extraneous overlaps is somewhat "excessively adequate" for determining combinatorics of the corresponding polyhedra.

\medskip
2. \textbf{Developing algorithms}. In cases of non-obtuse angles, there are many successful computational methods to construct the desired circle patterns, indicated in the works of Collins-Stephenson~\cite{Collins-Stephenson}, Chow-Luo~\cite{Chow-Luo}, Orick-Stephenson-Collins~\cite{Orick-Stephenson-Collins}, Connelly-Gortler~\cite{Connelly-Gortler}, Bowers~\cite{Bowers} and others.
We are looking forward to more numerical experiments to check whether these algorithms still work well under conditions of Theorem~\ref{T-1-1}. Moreover, can one prove the convergence?


\section{Appendix}\label{S-7a}
This section is devoted to proofs of Lemma~\ref{L-4-1} and Lemma~\ref{L-4-6}.

\begin{proof}[\textbf{Proof of Lemma~\ref{L-4-1}}]
Set
\begin{equation}\label{E-6-1}
l_{ij}=\arccos(\cos r_{i}\cos r_{j}-\cos\Theta_{ij}\sin r_{i}\sin r_{j})
\end{equation}
and $l_{jk},l_{ki}$ similarly. We reduce the proof to showing the following  inequalities:
\[
l_{ij}+l_{jk}>l_{ki},\;\;\; l_{jk}+l_{ki}>l_{ij},\;\;\; l_{ki}+l_{ij}>l_{jk}, \;\;\; l_{ij}+l_{jk}+l_{ki}<2\pi.
\]
Equivalently, let us verify
\begin{equation}\label{E-6-2}
\begin{aligned}
&4\sin\dfrac{l_{ij}+l_{jk}+l_{ki}}{2}\sin\dfrac{l_{ij}+l_{jk}-l_{ki}}{2}
\sin\dfrac{l_{jk}+l_{ki}-l_{ij}}{2}\sin\dfrac{l_{ki}+l_{ij}-l_{jk}}{2} \\
=&\sin^2 l_{ij}\sin^2 l_{jk}-(\cos l_{ij}\cos l_{jk}-\cos l_{ki})^2>0.
\end{aligned}
\end{equation}
To simplify notations, for $\mu=i,j,k$, set $a_\mu=\cos r_\mu$, $x_\mu=\sin r_\mu$.
Combining~\eqref{E-6-1} and~\eqref{E-6-2}, we need to demonstrate
\begin{equation}\label{E-6-3}
\begin{aligned}
&\sin^2\Theta_{ij}x^2_ix^2_j+\sin^2\Theta_{jk}x_j^2x^2_k+\sin^2\Theta_{ki}x^2_k x_i^2-(2+2\cos\Theta_{ij}\cos\Theta_{jk}\cos\Theta_{ki})x_i^2x_j^2x_k^2\\
&\;\;+2\lambda_{jk}^ia_ja_kx_jx_kx_i^2+2\lambda_{ki}^ja_ka_ix_kx_ix_j^2+2\lambda_{ij}^ka_ia_jx_ix_jx_k^2>0,
\end{aligned}
\end{equation}
where
\[
\lambda_{jk}^i=\cos\Theta_{jk}+\cos\Theta_{ki}\cos\Theta_{ij}.
\]
Since $a^2_\mu+x^2_\mu=1$, an equivalent form of~\eqref{E-6-3} is
\begin{equation}\label{E-6-4}
\begin{split}
&\sin^2\Theta_{ij}a_k^2x^2_ix^2_j+\sin^2\Theta_{jk}a_i^2x_j^2x^2_k+\sin^2\Theta_{ki}a_j^2x^2_k x_i^2
+\zeta_{ijk}x_i^2x_j^2x_k^2\\
&\quad+2\lambda^i_{jk}a_ja_kx_jx_kx_i^2+2\lambda^j_{ki}a_ka_ix_kx_ix_j^2+2\lambda^k_{ij}a_ia_jx_ix_jx_k^2>0,
\end{split}
\end{equation}
where
\[
\begin{aligned}
\zeta_{ijk}=&
1-\cos^2\Theta_{ij}-\cos^2\Theta_{jk}-\cos^2\Theta_{ki}
-2\cos\Theta_{ij}\cos\Theta_{jk}\cos\Theta_{ki}\\
=&-4 \cos\dfrac{\Theta_{ij}+\Theta_{jk}+\Theta_{ki}}{2}
\cos\dfrac{\Theta_{ij}+\Theta_{jk}-\Theta_{ki}}{2}
\cos\dfrac{\Theta_{jk}+\Theta_{ki}-\Theta_{ij}}{2}
\cos\dfrac{\Theta_{ki}+\Theta_{ij}-\Theta_{jk}}{2}\\
>&0.
\end{aligned}
\]
Notice that $\Theta_{jk},\Theta_{ki},\Theta_{ij}$ form the inner angles of
a spherical triangle. Let $\phi_i,\phi_j,\phi_k$ denote the lengths of sides opposite to $\Theta_{jk},\Theta_{ki},\Theta_{ij}$, respectively. The second spherical law of cosines gives
\begin{equation}\label{E-6-5}
\lambda^i_{jk}=\cos\Theta_{jk}+\cos\Theta_{ki}\cos\Theta_{ij}= \cos\phi_i\sin\Theta_{ki}\sin\Theta_{ij}.
\end{equation}
Set
\[
s_{i}=\sin\Theta_{jk}a_ix_jx_k,\;\;\; s_{j}=\sin\Theta_{ki}a_jx_kx_i,\;\;\;   s_{k}=\sin\Theta_{ij}a_kx_ix_j.
\]
Inserting~\eqref{E-6-5} into~\eqref{E-6-4}, we reduce the proof to showing
\[
s_{i}^2+s_{j}^2+s_{k}^2+2\cos \phi_is_{j}s_{k}+2\cos \phi_j s_{k}s_{i}+2\cos \phi_ks_{i}s_{j}+\zeta_{ijk}x_i^2x_j^2x_k^2>0.
\]
For this purpose, we write the left side of the formula as the sum of $\zeta_{ijk}x_i^2x_j^2x_k^2$ and the quadratic form in variables $s_i,s_j,s_k$ with the matrix
\[
A=\left[\begin{array}{ccc}
1 & \cos \phi_k & \cos \phi_j \\
\cos \phi_k & 1 & \cos \phi_i \\
\cos \phi_j & \cos \phi_i & 1
\end{array}\right].
\]
Recalling $\phi_i,\phi_j,\phi_k$ are lengths of a spherical triangle, we obtain
\[
\operatorname{det}(A)=4 \sin\dfrac{\phi_i+\phi_j+\phi_k}{2}
\sin\dfrac{\phi_i+\phi_j-\phi_k}{2}
\sin\dfrac{\phi_j+\phi_k-\phi_i}{2}
\sin\dfrac{\phi_k+\phi_i-\phi_j}{2}>0.
\]
By Sylvester's criterion, we then check $A$ is positive-definite.
As a result,
\[
s_{i}^2+s_{j}^2+s_{k}^2+2\cos \phi_is_{j}s_{k}+2\cos \phi_j s_{k}s_{i}+2\cos \phi_ks_{i}s_{j}+\zeta_{ijk}x_i^2x_j^2x_k^2>0.
\]
We thus finish the proof.
\end{proof}

\begin{proof}[\textbf{Proof of Lemma~\ref{L-4-6}}]
We follow the idea of He~\cite[Lemma 7.1]{He}. Assume the lemma is not true. For each positive integer $n$, then there exists a circle pattern $\mathcal P_n=\{D_n(v)\}_{v\in V}$ on $(\hat{\mathbb C},\mathrm {ds})$
with contact graph isomorphic to the $1$-skeleton of $T$ and overlap angle function $\Theta_n\in\Omega$ such that
\[
r(D_n(v_\alpha))> n \times r(D_n(u_\beta))
\]
for a pair of adjacent vertices $v_\alpha,u_\beta\in V$. Moreover, $\mathcal P_n$ is subject to the requirement that $r(D_n(v))\leq \pi/2$ for all $v\in V$. By passing to a subsequence if necessary, we assume for each vertex $v\in V$, $\lim_{n\to\infty}r(D_n(v))$ exists in $[0,\pi/2]$.

Let $V^0$ be the set of vertices $v\in V$ for which $\lim_{n\to\infty}r(D_n(v))=0$ and let $V^+=V\setminus V^0$. Obviously, we have $u_\beta\in V^0$. Furthermore, we claim $V^+\neq\emptyset$. Otherwise, $r(D_n(v))\to 0$ for all $v\in V$. It follows that the diameter of $(\hat{\mathbb C},\mathrm {ds})\cong\mathbb S^2$ tends to zero, which leads to a contradiction. Since  $V^+, V^0$ form a pair of complementary, nonempty subsets of $V$, there exist two adjacent vertices belonging to $V^+$ and $V^0$, respectively. Without loss of generality, we assume $v_\alpha\in V^+$, $u_\beta\in V^0$.

Passing to a subsequence again, we assume further the disk $D_n(v)$ converges to some disk $D_{\infty}(v)$ in $(\hat{\mathbb C},\mathrm {ds})$ for each
 $v\in V^+$. Meanwhile, we use $\Theta_{\infty}([u,w])$ to denote the overlap angle between the disks $D_{\infty}(u),D_{\infty}(w)$ provided $u,w\in V^+$ are adjacent vertices. Since $\{\Theta_n\}$ stays in the compact subset $\Omega\subset W$, we derive $\Theta_\infty$ satisfies conditions $\mathrm{\mathbf{(c1)}}-\mathrm{\mathbf{(c4)}}$ when the related overlap angles exist.

Notice that there exist at least two vertices in $V^+$ adjacent to $v_\alpha$. Let $u_{1},\cdots, u_{d}$, in anticlockwise order, be all adjacent vertices of $v_\alpha$. Without loss of generality, we may assume $u_1\in V^+$, $u_2=u_\beta\in V_0$. Set
\[
j=\min\{\,l\,|\,l>2,\,u_l\in V^+\,\}.
\]
Then $u_j\in V^{+}$ and $u_i\in V^0$ for $i=2,\cdots,j-1$. One is ready to see the circles $\partial D_\infty(u_1),\partial D_\infty(u_j)$ have an intersection point in $\partial D_{\infty}(v_\alpha)$, which yields
\begin{equation}\label{E-6-6}
\partial D_\infty(u_1)\cap \partial D_\infty(u_j)\cap\partial D_{\infty}(v_\alpha)\neq \emptyset.
\end{equation}
We claim $u_1,u_j$ are non-adjacent. Otherwise, either $v_\alpha, u_1,u_j$ are the vertices of a triangle of $T$ or the edges $[v_\alpha,u_1], [u_1,u_j], [u_j,v_\alpha]$ form a simple closed curve separating the vertices of $T$. In the first case, recall $\Theta_\infty$ satisfies conditions $\mathrm{\mathbf{(c2)}}$. Lemma~\ref{L-4-2} gives
\[
\partial D_\infty(u_1)\cap \partial D_\infty(u_j)\cap\partial D_{\infty}(v_\alpha)=\emptyset,
\]
which leads to a contradiction. In the second case, note that $r(D_{\infty}(v))\leq \pi/2$ for all $v\in V$. Because $\Theta_\infty$ satisfies conditions $\mathrm{\mathbf{(c3)}}$, it follows from Lemma~\ref{L-4-3} that
\[
D_\infty(u_1)\cap D_\infty(u_j)\cap D_{\infty}(v_\alpha)=\emptyset,
\]
which also contradicts~\eqref{E-6-6}.

We thus assert $u_1,u_j\in V^{+}$ are non-adjacent vertices. As $r(D_n(u_i))\to 0$ for $i=2,\cdots,j-1$, the disks $D_\infty(u_1),D_\infty(u_j)$ are externally tangent. Therefore, either $D_\infty(u_1),D_\infty(u_j)$ intersect at a unique point $p\in \partial D_\infty(v_\alpha)$ or $D_\infty(u_1), D_\infty(u_j)$ share the boundary.
In the first case, Lemma~\ref{L-4-4} indicates
\begin{equation}\label{E-6-7}
\Theta_{\infty}([v_\alpha,u_1])+\Theta_{\infty}([u_j,v_\alpha])=\pi.
\end{equation}
In the second case, one also derives~\eqref{E-6-7}

Meanwhile, let $w_1=v_\alpha,w_2=u_{j-1},\cdots, w_{m}$, in anticlockwise order, be all adjacent vertices of $u_j$. As before, we obtain $w_k\in V^+$ non-adjacent to $w_1=v_\alpha$ such that $w_i\in V^0$ for $i=2,\cdots,k-1$. We claim $w_k\neq u_1$. Otherwise, $w_k, v_\alpha$ are adjacent vertices, which leads to a contradiction.

It follows that the vertices $u_1, w_k$ are either adjacent or non-adjacent. In the first case, similar arguments to formula~\eqref{E-6-7} give
\begin{equation}\label{E-6-8}
\Theta_{\infty}([u_1, w_k])+\Theta_{\infty}([w_k,u_j])=\pi.
\end{equation}
Combing~\eqref{E-6-7} and~\eqref{E-6-8}, we obtain
\[
\Theta_{\infty}([v_\alpha,u_1])+\Theta_{\infty}([u_1, w_k])
+\Theta_{\infty}([w_k,u_j])+\Theta_{\infty}([u_j,v_\alpha])=2\pi,
\]
which violates condition $\mathrm{\mathbf{(c4)}}$, considering that the edges
$[v_\alpha,u_1],[u_1, w_k],[w_k,u_j], [u_j,v_\alpha]$ form a simple closed curve separating the vertices of $T$. In the second case, it is easy to see both the disks $D_\infty(u_j), D_\infty(w_k)$ are externally tangent to $D_\infty(u_1)$ at a common point, which implies
\[
\Theta_{\infty}([u_j,w_k])=\pi.
\]
This contradicts that $\{\Theta_n\}$ stays in a compact subset of $W$. Thus the lemma is proved.
\end{proof}

\bigskip
\noindent \textbf{Acknowledgements}.
The author is supported by NSFC (No.12371075 and No.11631010). We would like to thank Steven Gortler for a great many helpful conversations and encouragements. We also thank both the Institute for Computational and Experimental Research Mathematics (ICERM) at Brown University and the organizers for hosting the workshop on circle packings and geometric rigidity that indirectly led to the development of this paper.

\end{document}